\newcommand{\R}{\mathbb R}
\newcommand{\N}{\mathbb N}
\newtheorem{theorem}{Theorem}[section]
\newtheorem{proposition}[theorem]{Proposition}
\newtheorem{lemma}[theorem]{Lemma}
\newtheorem{definition}{Definition}[section]
\newtheorem{remark}{Remark}
\theoremstyle{remark}
\numberwithin{equation}{section}
\begin{document}
	
	\pagenumbering{arabic}	
\title[Controllability of KdV in half-line]{Another look at the control properties of the Korteweg-de Vries equation}
	\author[Capistrano-Filho]{Roberto de A. Capistrano-Filho*}
	\address{Departamento de Matem\'atica,  Universidade Federal de Pernambuco (UFPE), 50740-545, Recife (PE), Brazil.}
	\email{roberto.capistranofilho@ufpe.br}
	
	\author[Gallego]{Fernando Gallego}
\address{Departamento de Matem\'atica, Universidad Nacional de Colombia (UNAL), Cra 27 No. 64-60, 170003, Manizales, Colombia}
\email{fagallegor@unal.edu.co}	
\date{\today}
		\thanks{*Corresponding author: roberto.capistranofilho@ufpe.br}
		\thanks{Capistrano–Filho was partially supported by CAPES/COFECUB grant number 88887.879175/2023-00, CNPq grant numbers 421573/2023-6, 307808/2021-1, and 301744/2025-4, and Propesqi (UFPE). Gallego was partially supported by the Hermes UNAL project no. 61029}
	\subjclass[2010]{Primary, 35Q53; Secondary, 37K10, 93B05, 93D15}
\keywords{Korteweg--de Vries equation, exact boundary controllability, operational controllability, half-line, critical set}
	
\begin{abstract}
This paper represents a new perspective in understanding the controllability of the Korteweg-de Vries (KdV) equation on unbounded domains. By studying the equation on both the right and left half-line with a single control input, we show that a class of solutions exists for which the KdV equation is exactly controllable. This is accomplished through the introduction of a method for explicitly characterizing controls arising from the Hilbert Uniqueness Method, referred to as \textit{operational controllability}, which yields fundamental insights for proving exact controllability results for the KdV equation. This approach allows for explicitly characterizing both the control input and the controllable solutions. Furthermore, this concept holds significant potential for application to various nonlinear dispersive equations on the half-line and in bounded intervals.
	\end{abstract}

	\maketitle


\section{Introduction}
\subsection{Problem framework} The theory of local well-posedness (LWP), i.e., the existence, uniqueness, and continuity of the data-to-solution map, for the initial boundary value problem (IBVP) associated with the KdV equation \cite{Korteweg}, is well-studied. On the right half-line $(0,+\infty)$ is considered the following IBVP
\begin{equation}\label{h1}
\begin{cases}\partial_t u+ \partial_x u+\partial_x^3 u+u \partial_x u=0, & \text { for }(x, t) \in(0,+\infty) \times(0, T), \\ u(0, t)=f(t), & \text { for } t \in(0, T), \\ u(x, 0)=\phi(x), & \text { for } x \in(0,+\infty),
\end{cases}
\end{equation}
and on the left half-line $(-\infty, 0)$ the following one
\begin{equation}\label{h2}
\begin{cases}\partial_t u+ \partial_x u+\partial_x^3 u+u \partial_x u=0, & \text { for }(x, t) \in(-\infty, 0) \times(0, T), \\ u(0, t)=g_1(t), \quad \partial_x u(0, t)=g_2(t),& \text { for } t \in(0, T), \\ u(x, 0)=\phi(x), & \text { for } x \in(-\infty, 0).\end{cases}
\end{equation}

The rationale behind the presence of one boundary condition in the right half-line problem \eqref{h1} as opposed to two boundary conditions in the left half-line problem \eqref{h2} can be elucidated by integral identities concerning smooth solutions to the linear KdV equation. For such solutions, denoted by $v$, and any arbitrary time $t$, where $0<t<T$, the following holds:
\begin{equation}\label{h1rr}
\int_0^{+\infty} v^2(x, t) d x=\int_0^{+\infty} v^2(x, 0) d x+\int_0^t\left[2 v\left(0, t^{\prime}\right) v_{x x}\left(0, t^{\prime}\right)-v_x^2\left(0, t^{\prime}\right)\right] d t^{\prime}
\end{equation}
and
\begin{equation}\label{h2rr}
\int_{-\infty}^0 v^2(x, t) d x=\int_{-\infty}^0 v^2(x, 0) d x-\int_0^t\left[2 v\left(0, t^{\prime}\right) v_{x x}\left(0, t^{\prime}\right)-v_x^2\left(0, t^{\prime}\right)\right] d t^{\prime}.
\end{equation}

Now, assuming $v(x, 0)=0=v\left(0, t^{\prime}\right)$ for all $x>0$ and $0<t^{\prime}<t$, we deduce from \eqref{h1rr} that $v(x, t)=0$ for all $x>0$. However, the existence of $v(x, t) \neq 0$ for $x<0$, satisfying $v(x, 0)=0=v\left(0, t^{\prime}\right)$ for all $x<0$ and $0<t^{\prime}<t$, is not precluded by \eqref{h2rr}. Indeed, such nonzero solutions do exist, as demonstrated in \cite{Holmer}. Nevertheless, \eqref{h2rr} does reveal that homogeneous conditions $v(x, 0)=v\left(0, t^{\prime}\right)=v_x\left(0, t^{\prime}\right)=0$ for all $x<0$ and $0<t^{\prime}<t$ imply that $v(x, t)=0$ for all $x<0$.

Even though the well-posedness theory is well understood, as we shall see, it is important to present a fundamental problem that forms the basis of our work and that we intend to address:

\vspace{0.1cm}
\noindent \textbf{Exact control problem:} {\em Given an initial state $\phi$ and a terminal state $\psi$ in a certain space, can one determine a suitable control input $f$ (or $g_1$ and $g_2$) such that equation \eqref{h1} (or \eqref{h2}) admits a solution $u$ satisfying $u(x,0)=\phi(x)$ and $u(x,T)=\psi(x)$? }

\vspace{0.1cm}
\noindent If it is always possible to find a control input that drives the solution of the system described by \eqref{h1} or \eqref{h2} from any initial state $\phi$ to any final state $\psi$, then the system is said to be {\bf exactly controllable}. Moreover, if we take the final state $\psi=0$, the system is said to be {\bf null controllable}.

\subsection{Why study control problems on unbounded domains?}  The well-posedness of the system \eqref{h1}, as an initial-boundary value problem, is well established in the literature (see, for example, \cite{Bonahalf1,Bonahalf2,bonahalf3}). In particular, a result by Bona \textit{et al.}  \cite{Bonahalf2} guarantees the following:

\vspace{0.2cm}
\noindent \textbf{Theorem A.}\textit{ Let $\nu>0$ and $s>-1$ be given with $s \neq 3 m+\frac{1}{2},\ m=0,1,2, \cdots$. Set
$$
H_\nu^s\left(\mathbb{R}^{+}\right):=\left\{f \in H^s\left(\mathbb{R}^{+}\right) ; \mathrm{e}^{\nu x} f \in H^s\left(\mathbb{R}^{+}\right)\right\}.
$$
For any given compatible pair\footnote{Refer to \cite{Bonahalf1} for the precise definition of $s$-compatibility in this context.} $(\phi, h) \in H_\nu^s\left(\mathbb{R}^{+}\right) \times H_{l o c}^{\frac{s+1}{3}}\left(\mathbb{R}^{+}\right)$, there exists $T>0$ such that the IBVP \eqref{h1} admits a unique mild solution $u \in C\left([0, T] ; H_\nu^s\left(\mathbb{R}^{+}\right)\right)$.} 

\vspace{0.1cm}

As a direct consequence of Theorem A (see, for instance, \cite[Section 4]{RZsurvey}), they deduced that the system \eqref{h1} is not exactly controllable for a certain class of final states. Specifically, this class is given by 
$$\mathcal{C}:= \left\lbrace \phi_T \in H^s\left(\mathbb{R}^{+}\right): \text{$\phi_T \notin H^{s'}_{\nu}(\mathbb{R}^{+})$ for any $\nu$ and $s' > -1$}\right\rbrace.$$
 This result highlights that the exact controllability is obstructed for states that fail to belong to the more regular weighted Sobolev space \(H^{s'}_{\nu}(\mathbb{R}^{+})\), regardless of the weight parameter \(\nu\) and for any \(s' > -1\).   This naturally raises the question:
\vglue 0.2cm

\vspace{0.2cm}
\noindent\textbf{Question \(\mathcal{A}\):}  \textit{Is there a class of states for which the system \eqref{h1} (or \eqref{h2}) is either null controllable or exactly controllable?}

\vglue 0.2cm

In this direction, Rosier \cite[Theorem 1.2]{RosierSICON2000} shows that for a certain class of solutions, the linear system associated with \eqref{h1} is not null controllable for some states. Specifically, the following result was established:

\vspace{0.1cm}
\noindent \textbf{Theorem B.} \textit{Let \(T > 0\) be given. Then, there exists an initial data \(\phi \in L^2\left(\mathbb{R}^{+}\right)\) such that if \(u \in L^{\infty}\left(0, T; L^2\left(\mathbb{R}^{+}\right)\right)\) solves
$$
\left\{\begin{array}{l}
\partial_t u+\partial_x u+\partial_x^3 u=0, \quad \text{in} \ \mathcal{D}^{\prime}((0,+\infty) \times(0, T)), \\
u\big|_{t=0}=\phi,
\end{array}\right.
$$
then \(u\big|_{t=T} \neq 0.\)}

\vspace{0.1cm}

It means that the bad behavior of the trajectories as $x \rightarrow \infty$ is the price to be paid for getting the exact controllability in $(0,+\infty)$. Indeed, for a certain function $u_0$ in $L^2(0,+\infty)$ and $u_T=0$ a trajectory $u$ as above cannot be found in $L^{\infty}\left(0, T, L^2(0,+\infty)\right)$.  However, when the bounded energy condition \(\left(u \in L^{\infty}\left(0, T; L^2\left(\mathbb{R}^{+}\right)\right)\right)\) is relaxed, the exact boundary controllability of the linear KdV equation holds, as demonstrated also by Rosier in  \cite[Theorem 1.3]{RosierSICON2000}:

\vspace{0.2cm}
\noindent \textbf{Theorem C.} \textit{Let \(T, \epsilon\), and \(b\) be positive numbers, with \(\epsilon < \frac{T}{2}\). Let \(\phi \in H^0\left(\mathbb{R}^{+}\right) = L^2\left(\mathbb{R}^{+}\right)\) and \(\psi \in H_{-b}^0\left(\mathbb{R}^{+}\right)\). Then, there exists a solution
$$
u \in L_{\mathrm{loc}}^2([0, \infty) \times [0, T]) \cap C\left([0, \epsilon]; L^2\left(\mathbb{R}^{+}\right)\right) \cap C\left([T-\epsilon, T]; H_{-b}^0\left(\mathbb{R}^{+}\right)\right)
$$
which satisfies
$$
\left\{\begin{array}{l}
\partial_t u+\partial_x u+\partial_x^3 u=0, \quad \text{in} \ \mathcal{D}^{\prime}((0,+\infty) \times(0, T)) \\
u\big|_{t=0}=\phi, \quad u\big|_{t=T}=\psi.
\end{array}\right.
$$
}

Rosier \cite[Corollary 2]{Rosier2002} constructs a fundamental solution with compact time support for the linear KdV equation, yielding an explicit trajectory. However, with undesirable behavior at infinity, that drives any $u_0$ to zero. Summarizing, \cite[Theorem 1]{Rosier2002} establishes that the linear KdV equation is exactly boundary controllable on $(0,+\infty)$.


\vspace{0.2cm}
\noindent \textbf{Theorem D.} \textit{Let $T>0$ and let $u_0, u_T \in L^2(0,+\infty)$. Then there exists a function $u \in L_{l o c}^2([0, T] \times[0,+\infty))$ fulfilling
$$
\left\{\begin{aligned}
\partial_t u+\partial_x u+\partial_x^3 u & =0, \quad \quad t\in(0,T), \ \ x\in\mathbb{R}^+, \\
u_{\mid t=0} & =u_0, \\
u_{\left.\right|_{t=T}} & =u_T.
\end{aligned}\right.
$$}

It is important to point out that Theorems B, C, and D are related only to the linear equation. In this context, the control theory for the linear and nonlinear KdV equation still needs to be developed if compared with the theory in bounded domains. So, given these previous results, the study of exact controllability for systems \eqref{h1} and \eqref{h2} in the half-line framework becomes an interesting problem. Specifically, the following question arises:

\vspace{0.2cm}
\noindent\textbf{Question \(\mathcal{A'}\):} \textit{Given \(T > 0\) and \(\phi, \phi_T \in L^2(\R^+)\) (or $L^2(\R^-)$). Can one find an appropriate control input \(f(t) \in H^{\frac{1}{3}}(0, T)\) (or \(g_1(t) \in H^{\frac{1}{3}}(0, T)\) or \(g_2(t) \in L^2(0, T)\)) such that there exists a class of corresponding solutions \(u(x, t)\) of system \eqref{h1} (or \eqref{h2}) satisfying $u(x, 0) = \phi(x)$ and $\quad u(x, T) = \phi_T(x)$?}

\subsection{State of the arts}  Research on the IBVP \eqref{h1} began with Ton \cite{Ton}, who established the existence and uniqueness of solutions for smooth initial data with zero boundary conditions. Bona and Winther \cite{BoWi, BoWi1} extended these results, proving global well-posedness in $H^4(\mathbb{R}^{+})$ and continuous dependence, while Faminskii \cite{Faminskii1a} obtained well-posedness in weighted Sobolev spaces. Later, Bona \textit{et al.} \cite{Bonahalf1} proved conditional local well-posedness under $s$-compatibility conditions for $\psi \in H^s(\mathbb{R}^+)$ and $f \in H^{(s+1)/3}(\mathbb{R}^+)$ with $s>\tfrac{3}{4}$, and global results for $1\leq s\leq 3$.

Parallel developments followed with Colliander and Kenig \cite{CK}, who proposed a unified framework for the $\mathrm{gKdV}$ on $\mathbb{R}^+$, later refined by Faminskii \cite{Faminskii}. Holmer \cite{Holmer} and Bona \textit{et al.} \cite{bonahalf3} pushed the theory to the critical Sobolev exponent $s=-\tfrac{3}{4}$, while Faminskii \cite{Faminskii2} established global well-posedness for $s\geq 0$ under natural boundary conditions. Fokas \cite{fokas} introduced the unified transform method (UTM), extending IST techniques to IBVPs. Importantly, these frameworks \cite{Bonahalf1, CK, fokas, Holmer} also apply to the IBVP \eqref{h2}, and Cavalcante \cite{cav} adapted Colliander–Kenig’s approach to star graphs.
With respect to control theory, Russell and Zhang's works initiated the exploration of control strategies for the KdV equation \cite{russell3,russell2,Russell1,zhang2,zhang4}. As for the control issue, Rosier \cite{Rosier} examined boundary control of the KdV equation on the finite domain $(0,L)$ with  boundary conditions
\begin{equation}
\left\{
\begin{array}
[c]{lll}%
\partial_tu+ \partial_xu+\partial^3_xu+u\partial_xu=0, &  & \text{ in } (0,L)\times(0,T),\\
u(0,t)=u(L,t)=0,\text{ }\partial_xu(L,t)=g(t), &  & \text{ in }(0,T),\\
u(x,0)=u_0(x), & & \text{ in }(0,L),
\end{array}
\right.  \label{2}
\end{equation}
where the boundary value function $g(t)$ is considered as a control input, has important phenomena that directly affect the control problem related to them, so-called \textit{critical length phenomenon}. 
This phenomenon suggests that the exact controllability of the linear system associated with \eqref{2} hinges on the length $L$ of the spatial domain $(0,L)$. In other words, the linear system is controllable if and only if
\begin{equation}
L\notin\mathcal{N}:=\left\{  \frac{2\pi}{\sqrt{3}}\sqrt{k^{2}+kl+l^{2}}
\,:k,\,l\,\in\mathbb{N}^{\ast}\right\}  . \label{critical}
\end{equation}

In the case $L\in\mathcal{N}$, Rosier \cite{Rosier} proved that the linear system associated with \eqref{2} is not controllable, since a finite-dimensional subspace $\mathcal{M}(L)\subset L^2(0,L)$ remains unreachable. Specifically, for any nonzero $\psi\in\mathcal{M}$ and $g\in L^2(0,T)$, the solution $u\in C([0,T];L^2(0,L))\cap L^2(0,T;H^1(0,L))$ with $u(\cdot,0)=0$ cannot satisfy $u(\cdot,T)=\psi$. Thus, $(0,L)$ is called a \emph{critical domain} when $L\in\mathcal{N}$. After that, Coron and Crépeau \cite{coron} showed that the nonlinear system \eqref{2} remains locally exactly controllable for certain critical lengths $L=2k\pi$, provided that no integers $(m,n)\in\mathbb{N}^\ast\times\mathbb{N}^\ast$ with $m\neq n$ satisfy $m^2+mn+n^2=3k^2$. Later, Cerpa \cite{cerpa1} and Cerpa–Crépeau \cite{cerpa2} extended this result, proving local controllability for longer times at critical lengths.

Depending on the position where the control is applied at the boundary, the critical set cannot be explicitly characterized \cite{cerpatut,GG1}. Although several works \cite{GG1,CeRiZh} explored the phenomenon for alternative boundary conditions, the exact structure of the critical set remained unknown. Two decades later, Capistrano-Filho and da Silva \cite{CaSi} advanced the theory by analyzing the KdV equation on $[0,L]$ with Neumann-type conditions and one control input. They proved controllability even in the critical case $L\in\mathcal{R}_\beta$, employing the \textit{return method} and a fixed-point argument.

Beyond bounded domains, Muñoz \cite{munoz} studied approximate control of solitons for generalized KdV equations on $\mathbb{R}$, proving local null controllability and the possibility of accelerating solitons to prescribed velocities with quantitative error estimates. Finally, Ozsari \cite{Ozsari2020} addressed the lack of null controllability for the heat equation on the half-line, introducing an approach based on the unified transform method (UTM) \cite{fokas}, which also plays a key role in recent controllability results for KdV-type models.

\subsection{Operational controllability} We study the controllability of the KdV equation on the half-line using the Hilbert Uniqueness Method (HUM) introduced by Lions \cite{Lions}. We introduce a method for explicitly characterizing controls, termed \textit{operational controllability}, based on the derivation of global relations between the final state and the control through integral representations of solutions on the half-line and on finite intervals. This approach not only enables the application of HUM but also offers a framework potentially adaptable to other nonlinear evolution PDEs, extending ideas explored in recent works such as Fokas \cite{fokas}, Kenig \textit{et al.} \cite{CK}, Holmer \cite{Holmer}, and Bona \textit{et al.} \cite{Bonahalf1}. For the sake of simplicity, we will consider the exact controllability problem \eqref{h1}; the other cases in this work can be treated analogously. An important point is that, without any loss of generality, we shall consider only the case when the initial data $\phi=0$.  

\subsubsection{Operational controllability via forcing operator} We will use the representation introduced by Kenig \textit{et al.} \cite{CK}, and after explored by Holmer \cite{Holmer}.  Suppose that the initial data $\phi=0$. Then, the integral representation of the solution of \eqref{h1} given by the forcing operator takes the form
\begin{equation*}
u(x, t)=-\frac{1}{2} \theta(t) \mathscr{D} \partial_{x} u^2(x, t)+\theta(t) \mathscr{L}_{+}^\lambda \left( e^{-\pi i \lambda} f \right)(x,t) +\theta(t) \mathscr{L}_{+}^\lambda \left(\frac{1}{2} \theta(t) \mathscr{D} \partial_x u^2(0, t)\right)(x,t),
\end{equation*}
where the operators $\mathscr{D}$ and $\mathscr{L}^{\lambda}_{+}$ are defined in the \eqref{inhomo} and \eqref{forcingmas}, respectively. (See Appendix \ref{app1}). Now, if we consider the linearized system with initial data zero, the solution could be written as
\begin{align*}
u(x, t)=\theta(t) \mathscr{L}_{+}^\lambda \left( e^{-\pi i \lambda} f\right)(x,t).
\end{align*}
Thus, we define the nature of the control that drives the linearized system associated with \eqref{h1} to satisfy the exact controllability condition.
\begin{definition}\label{def-a}Let $\phi_T\in L^2(\R_x^+)$, thus the system 
\begin{equation}\label{eqnew21}
\begin{cases}\partial_t u+ \partial_x u+\partial_x^3 u=0, & \text { for }(x, t) \in(0,+\infty) \times(0, T), \\ u(0, t)=f(t), & \text { for } t \in(0, T), \\ u(x, 0)=0, & \text { for } x \in(0,+\infty),
\end{cases}
\end{equation}
is operational exact controllable at time $t = T$ (via forcing operator) if and only if the system \eqref{eqnew21} is exactly controllable with a control $f = f(t) \in H^{\frac{1}{3}}(\R^+_t)$ given by the relation
\begin{align}\label{eqnew16}
\theta(T) \mathscr{L}_{+}^\lambda \left( e^{-\pi i \lambda} f(t) \right)(x,T)= \phi_T(x),\quad \text{for every $\lambda\in(-1,1)$.}
\end{align}
\end{definition}

\begin{remark} Concerning the previous definition, we have the following:
\begin{itemize}
\item[i.]From the identities \eqref{eqnew17} and \eqref{forcingmas}, the relation \eqref{eqnew16}  is equivalent to
\begin{align}\label{eqnew19}
 \frac{3}{\Gamma(\lambda)} \int_{x}^{\infty}\int_0^T (y-x)^{\lambda-1}  A\left(\frac{y}{\left(T-t^{\prime}\right)^{1 / 3}}\right) \frac{\mathcal{I}_{-\frac23 - \frac{\lambda}{3}} f\left(t^{\prime}\right)}{\left(T-t^{\prime}\right)^{1 / 3}} d t^{\prime} d y &=\phi_T(x),  
\end{align}
for some $\lambda\in\mathbb{R}$, where $A$ is the Airy function given in \eqref{airyfunction}. It is crucial to emphasize that the presence of the parameter $\lambda$ in the relation above must adhere to the conditions ensuring the well-posedness of the system under consideration. 
\item[ii.] In Definition \ref{def-a}, operational controllability means that, given the final data, we obtain an explicit representation formula \eqref{eqnew19}, in this case through a forcing operator, that satisfies the exact controllability condition, that is, $u(x,T)=\phi_T(x)$.
\end{itemize} 
\end{remark}

\begin{remark}\label{rmk2} Observe that we can extend the previous definition for two classes of operators: the boundary operator \cite{Bonahalf1} and the UTM operator \cite{fokas}. Precisely, we have the following:
\begin{itemize}
\item [(a)] \textbf{Operational controllability via boundary operator:} Using the representation giving by Bona \textit{et al.} \cite{Bonahalf1},  we can establish the operational controllability of the linear system \eqref{eqnew21}. Note that \cite[Proposition 2.2]{Bonahalf1} implies that the solution of the system \eqref{eqnew21} is given by 
$$
u(x, t)=\left[W_b(t) f\right](x)=\left[U_b(t) f\right](x)+\overline{\left[U_b(t) f\right](x)} \quad \text{in $H^s(\R^+_x)$, for $\frac34 < s \leq 3$}.
$$
Here, for $x \geq $, $t \geq 0$,
$$
\left[U_b(t) f\right](x)=\frac{1}{2 \pi} \int_1^{\infty} e^{i \mu^3 t-i \mu t} e^{-\left(\frac{\sqrt{3 \mu^2-4}+i \mu}{2}\right) x}\left(3 \mu^2-1\right) \int_0^{\infty} e^{-\left(\mu^3 i-i \mu\right) \xi} f(\xi) d \xi d \mu.
$$
Thus, the control solving the exact controllability problem for the linearized system \eqref{eqnew21} is defined as follows.
\begin{definition}\label{def-b} Let $\phi_T\in H^s(\R_x^+)$, thus the system is operational exact controllable at time $t = T$ (via boundary operator)  if and only if the system \eqref{eqnew21} is exactly controllable with a control $f = f(t) \in H^{\frac{s+1}{3}}(\R^+_t)$ given by the relation
\begin{equation*}
Re \left(\int_1^{\infty}\int_0^{\infty} e^{i \mu^3 T-i \mu T} e^{-\left(\frac{\sqrt{3 \mu^2-4}+i \mu}{2}\right) x}\left(3 \mu^2-1\right)  e^{-\left(\mu^3 i-i \mu\right) \xi} f(\xi) d \xi d \mu\right)  =\pi \phi_T(x).
\end{equation*}
\end{definition}
\item[(b)]\textbf{Operational controllability via UTM operator:}  Following the Fokas approach \cite{fokas,fokas0,fokas2,himonas2022}, we can consider the case of $H^s(\R^+_x)$, for $-\frac34 < s $ with $\frac{s+1}{3}\notin \mathbb{N}+\frac12$, and we can consider the operational controllability by using the following integral formula\footnote{Note that the terms $(3 k^2-1)$ in the formula \eqref{eqnew20} arises from the dispersion relation associated with the linear KdV operator after applying the unified transform method and solving the corresponding global relation in Fourier space. For details, see for instance \cite[Chapter 1, Proposition 1.2 and Example 1.12]{fokas}.}
\begin{equation}\label{eqnew20}
\begin{aligned}
& u(x, t)=S\left[0, f, 0\right](x, t)  = -\frac{1}{2 \pi} \int_{\partial D^{+}_R} \mathrm{e}^{\mathrm{i} k x-\mathrm{i}( k-k^3) t } (3 k^2-1) \tilde{f}\left(k-k^3, T\right) d k, 
\end{aligned}
\end{equation}
where $\tilde{f}_0\left(k^3, T\right)$ are defined by 
$$
\tilde{f}\left(k^3, T\right)=\int_0^T \mathrm{e}^{\mathrm{i}(k- k^3) t^{\prime}} f\left(t^{\prime}\right)d t^{\prime}, \quad k \in \mathbb{C}, 0<t<T,
$$
and 
$\mathbb{C}^{+}$ and $\mathbb{C}^{-}$ will denote the upper half $(\operatorname{Im} k>0)$ and the lower half $(\operatorname{Im} k<0)$ of the complex $k$-plane. The domain $D$ is defined by
$$
D=\{k \in \mathbb{C}, \ \operatorname{Re} (ik-ik^3)<0\}=\{k \in \mathbb{C}, \   (\operatorname{Im}k)\left( 3 ( \operatorname{Re}k)^2-  (\operatorname{Im}k)^2 -1 \right) <0\} .
$$
Moreover, $D^{+}$ and $D^{-}$ will denote the part of $D$ in $\mathbb{C}^{+}$ and $\mathbb{C}^{-}$, namely
\begin{equation*}
D^{+}=D \cap \mathbb{C}^{+}\quad \text{and} \quad D^{-}=D \cap \mathbb{C}^{-} .
\end{equation*}
The asymptotic form of $D, D^{+}, D^{-}$as $k \rightarrow \infty$  will be denoted by $D_R, D_R^{+}, D_R^{-}$, respectively, i.e.,
\begin{equation}\label{regionD}
\begin{gathered}
D_R=\{k \in D, \quad|w(k)|>R, \quad R \text { large }\}, \quad
D_R^{+}=D_R \cap \mathbb{C}^{+}, \quad D_R^{-}=D_R \cap \mathbb{C}^{-}.
\end{gathered}
\end{equation}
From \cite[Theorem 2]{fokas2}, the function $u(x,t)$ given by \eqref{eqnew20} defines a solution to the system  \eqref{eqnew21} in the space $C([0,T];H^s(\R^+_x))$. Thus, in this case,  the control solving the exact controllability problem for the linearized system \eqref{eqnew21} is defined as follows.

\begin{definition}\label{def-c}Pick $\phi_T\in H^s(\R_x^+)$, the system is operational exact controllable at time $t = T$ (via UTM operator) if and only if (via forcing operator) the system \eqref{eqnew21} is exactly controllable with a control $f = f(t) \in H^{\frac{s+1}{3}}(\R^+_t)$ given by the relation
\begin{equation*}
 \int_{\partial D^{+}_R}\int_0^T \mathrm{e}^{\mathrm{i} k x-\mathrm{i} (k-k^3) (T-t^{\prime})} (3 k^2-1)  f\left(t^{\prime}\right)d t^{\prime} d k=-2\pi \phi_T(x).
\end{equation*}
\end{definition}



 
\item[(c)] Note that in \cite{bonahalf3,fokas2}, the authors establish the uniqueness of the solution to \eqref{eqnew21} when $-3/4<s<1$. Due to this and taking into account the previous definitions, ``operational controllability" provides a framework to determine inputs that ensure exact controllability, characterized by the forcing, boundary, and UTM operators. Since these definitions are equivalent, the desired control can be obtained by combining their relations; for instance, if system \eqref{h1} is controllable through all three operators, the control function $f$ can be written as
$$
\left(  \left\lbrace \text{Forcing operator} \right\rbrace
f
 \right)(x,t) + 
 \left(  \left\lbrace \text{UTM operator} \right\rbrace
f
 \right)(x,t) = 0,$$
 $$
 \left(  \left\lbrace \text{Forcing operator} \right\rbrace
f
 \right)(x,t) -
 \left(  \left\lbrace \text{Boundary operator} \right\rbrace
f
 \right)(x,t) = 0,$$
or
 $$
 \left(  \left\lbrace \text{Boundary operator} \right\rbrace
f
 \right)(x,t) + 
 \left(  \left\lbrace \text{UTM operator} \right\rbrace
f
 \right)(x,t) = 0.$$
 Therefore, if the inverses of the operators introduced above are well-defined, we may conclude that the control input $f(t)$ is given by:
$$
f(t)  \approx  \left[ \left(  \left\lbrace \text{Forcing operator} \right\rbrace
   \right) \right]^{-1}(\phi_T(x))(t),$$
   or
  $$
  f(t)  \approx  \left[ \left(   \left\lbrace \text{Boundary operator} \right\rbrace \right) \right]^{-1}(\phi_T(x))(t),$$
  or
$$
f(t) \approx  \left[ \left(  \left\lbrace \text{UTM operator} \right\rbrace \right) \right]^{-1}(\phi_T(x))(t).
$$
\end{itemize}
\end{remark}
\subsection{Notations and main results} So far, no results address the controllability of the KdV equation on the half-line. Here, we relate the IBVP with controllability and propose an approach applicable not only to the KdV but also to other dispersive equations on unbounded domains, namely $\mathbb{R}^+=(0,\infty)$ and $\mathbb{R}^-=(-\infty,0)$. For the well-posedness and control analysis (via forcing operator), we first consider equation \eqref{h1} in the range $-\tfrac{3}{4}<s<\tfrac{3}{2}$ with $s\neq \tfrac{1}{2}$.

 In this set of conditions over $s$,  we have that
\begin{equation}\label{h3}
\phi \in H^s\left(\mathbb{R}^{+}_x\right), \quad f \in H^{\frac{s+1}{3}}\left(\mathbb{R}^{+}_t\right), \quad \text{and if}\ \  \frac{1}{2}<s<\frac{3}{2}, \ \phi(0)=f(0).
\end{equation}
Moreover, considering the system \eqref{h2}, if $-\frac{3}{4}<s<\frac{3}{2},$ for $s \neq \frac{1}{2}$, we get
\begin{equation}\label{h4}
\phi \in H^s\left(\mathbb{R}^{-}_x\right), g_1 \in H^{\frac{s+1}{3}}\left(\mathbb{R}^{+}_t\right), g_2 \in H^{\frac{s}{3}}\left(\mathbb{R}^{+}_t\right),\quad \text {and if }\ \ \frac{1}{2}<s<\frac{3}{2},\ \phi(0)=g_1(0) .
\end{equation}
Let $X$ denote the modified Bourgain space $X_{s, b} \cap D_\alpha$ with $b<\frac{1}{2}$ and $\alpha>\frac{1}{2}$, where
$$
\|u\|_{X_{s, b}}  =\left(\iint_{\xi, \tau}\langle\xi\rangle^{2 s}\left\langle\tau-\xi^3\right\rangle^{2 b}|\hat{u}(\xi, \tau)|^2 d \xi d \tau\right)^{1 / 2},$$
and
$$
\|u\|_{D_\alpha}  =\left(\iint_{|\xi| \leq 1}\langle\tau\rangle^{2 \alpha}|\hat{u}(\xi, \tau)|^2 d \xi d \tau\right)^{1 / 2} .
$$

With this in hand, we define the solution for the systems \eqref{h1} and \eqref{h2}, respectively, as follows.
\begin{definition}\label{def_1} We will call a solution $u(x, t)$  of \eqref{h1}-\eqref{h3} (resp. \eqref{h2}-\eqref{h4}) on $[0, T]$ if the following holds:
\begin{itemize}
\item[a)] \underline{Well-defined nonlinearity:} The function for some appropriate space $X$, if $u \in X$, then $\partial_x u^2$ is a well-defined in the sense of distribution. Moreover, the function $u(x, t)$ satisfies system \eqref{h1} (resp. \eqref{h2}) in the sense of distributions on the set $(x, t) \in(0,+\infty) \times(0, T)$ (resp. $(x, t) \in(-\infty, 0) \times(0, T)$).
\item[b)] \underline{Space traces:} The function $u \in C\left([0, T] ; H_x^s\right)$ and in this sense $u(\cdot, 0)=\phi$ in $H^s\left(\mathbb{R}^{+}_x\right)$ (resp. $u(\cdot, 0)=\phi$ in $H^s\left(\mathbb{R}^{-}_x\right)$).
\item[c)] \underline{Time traces:} Considering $u \in C\left(\mathbb{R}_x ; H^{\frac{s+1}{3}}(0, T)\right)$ and in this sense $u(0, \cdot)=f$ in $H^{\frac{s+1}{3}}(0, T)$ (resp. $u(0, \cdot)=g_1$ in $H^{\frac{s+1}{3}}(0, T)$).
\item[d)] \underline{Derivative traces:} If $\partial_x u \in C\left(\mathbb{R}_x ; H^{\frac{s}{3}}(0, T)\right)$, considering only the system \eqref{h2}-\eqref{h4} we require that, in this sense, $u(0, \cdot)=g_2$ in $H^{\frac{s}{3}}(0, T)$.
\end{itemize}
\end{definition}

Let us now introduce two sets. For any $\phi \in H^s(\R^+)$, $-\frac{3}{4}<s\leq \frac{3}{2}$, we define the admissible final state class for the linearized systems \eqref{h1} and \eqref{h2} by the sets
\begin{multline*}
    \mathcal{A}^s_r(\phi,T)=\left\lbrace \phi_T \in H^s(\R^+): \text{the solution given by Definition \ref{def_1} for the linear system associated} \right.\\
    \left.\text{with \eqref{h1} satisfies $u(x,T)=\phi_T$, for a boundary control $f \in H^{\frac{s+1}{3}}(\R^+)$}\right\rbrace
\end{multline*}
and
\begin{multline*}
    \mathcal{A}^s_l(\phi,T)=\left\lbrace \phi_T \in H^s(\R^-): \text{the solution given by Definition \ref{def_1} for the linear system associated } \right.\\
    \left.\text{with \eqref{h2} satisfies $u(x,T)=\phi_T$, for a boundary controls $g_1 \in H^{\frac{s+1}{3}}(\R^+), \ g_2 \in H^{\frac{s}{3}}(\R^+)$}\right\rbrace,
\end{multline*}
respectively. Note that both sets are non-empty (see this discussion at Section \ref{sec6}).


Now, consider the following ball in $X:=X_{0,b}\cap D_{\alpha}$ given by 
$$
B_r=\left\lbrace u \in X_{0,b}\cap D_{\alpha}: \|u\|_{X_{0,b}\cap D_{\alpha}} \leq r\right\rbrace.
$$
With all the previous frameworks, we present two results answering Question $\mathcal{A}’$ and, consequently, Question $\mathcal{A}$. 
\begin{theorem}\label{main-a}
Let $T>0$. Then, there exists $\delta>0$ such that for any $\phi \in L^2(\R^+_x)$ and $ \phi_T  \in \mathcal{A}_r^0(\phi,T)$, verifying $
\|\phi\|_{L^2(\R^+_x)}+\|\phi_T\|_{L^2(\R^+_x)}\leq \delta,$  the system \eqref{h1} admits a unique solution $u \in B_r\subset  X=X_{0,b}\cap D_{\alpha}$ operational exactly controllable at time $T$, it means that there exist $f\in H^{\frac{1}{3}}(\R^+_t)$ such that
 \begin{equation*}
\phi_T(x)=e^{-T (\partial_x+\partial_x^3)} \phi(x)-\frac{1}{2}  \mathscr{D} \partial_{x} u^2(x, T)+ \mathscr{L}_{+}^\lambda h(x, T,\phi), 
\end{equation*}
where
\begin{equation*}
h(t,\phi)=e^{-\pi i \lambda}\left[f(t)-\left.\theta(t) e^{-t(\partial_x+ \partial_x^3)} \phi\right|_{x=0}+\frac{1}{2} \theta(t) \mathscr{D} \partial_x u^2(0, t)\right], \quad \text{for every $\lambda\in(-1,1)$,}
\end{equation*}
with $ \mathscr{L}_{+}^{\lambda}$ the forcing operator given by \eqref{forcingmenos}.
\end{theorem}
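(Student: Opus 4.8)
The plan is to combine the exact controllability of the linearised equation with a contraction argument that absorbs the quadratic nonlinearity $\tfrac12\partial_x u^2$. By the superposition remark preceding Definition \ref{def-a}, it suffices to work with the integral representation \eqref{eqnew18}, augmented by the free evolution $e^{-t(\partial_x+\partial_x^3)}\phi$ and by the boundary-trace corrections needed to enforce $u(0,t)=f(t)$; these collapse the two forcing contributions, together with the trace $-\theta(t)e^{-t(\partial_x+\partial_x^3)}\phi|_{x=0}$, into the single term $\mathscr{L}_+^\lambda h(\cdot,T,\phi)$ appearing in the statement. The target condition $u(\cdot,T)=\phi_T$ then becomes exactly the displayed identity, and the heart of the matter is to produce, for each candidate profile $w\in X=X_{0,b}\cap D_\alpha$, a boundary control $f=f(w)\in H^{1/3}(\R^+_t)$ for which this identity holds, and to show the resulting solution map has a fixed point.

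For the linear step I would invoke Rosier's exact boundary controllability in $L^2(\R^+)$ (Theorem D), which both guarantees $\mathcal{A}_r^0(\phi,T)\neq\emptyset$ and, more importantly, that the linearised operational controllability relation \eqref{eqnew16} is solvable for every $L^2$ target. Encoding this through the HUM method of Lions \cite{Lions} — that is, through the observability inequality for the adjoint system — I would build a bounded linear control operator $\Phi:L^2(\R^+)\to H^{1/3}(\R^+_t)$ satisfying $\theta(T)\mathscr{L}_+^\lambda(e^{-\pi i\lambda}\Phi\psi)(\cdot,T)=\psi$ together with $\|\Phi\psi\|_{H^{1/3}(\R^+_t)}\lesssim\|\psi\|_{L^2(\R^+)}$. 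Given $w\in X$ I would then set $f(w)=\Phi\big(\phi_T-e^{-T(\partial_x+\partial_x^3)}\phi+\tfrac12\mathscr{D}\partial_x w^2(\cdot,T)-\mathscr{L}_+^\lambda(\tfrac12\theta\,\mathscr{D}\partial_x w^2(0,\cdot))(\cdot,T)\big)$, which is precisely the choice forcing the representation formula evaluated at $w$ to equal $\phi_T$ at time $T$. The candidate solution map is $\Lambda(w)(x,t)=e^{-t(\partial_x+\partial_x^3)}\phi-\tfrac12\theta(t)\mathscr{D}\partial_x w^2(x,t)+\theta(t)\mathscr{L}_+^\lambda(e^{-\pi i\lambda}f(w))(x,t)+\theta(t)\mathscr{L}_+^\lambda(\tfrac12\theta\,\mathscr{D}\partial_x w^2(0,\cdot))(x,t)$, and a fixed point of $\Lambda$ solves \eqref{h1} with control $f(w)$ steering $\phi$ to $\phi_T$.

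To close the argument by the Banach fixed point theorem I would verify that $\Lambda$ maps a ball $B_R\subset X$ into itself and is a contraction there, for suitable $R$ and $\delta$. This rests on three ingredients: the boundedness of the Duhamel and forcing operators $\mathscr{D}$ and $\mathscr{L}_+^\lambda$, and of their time traces, on $X_{0,b}\cap D_\alpha$ with $b<\tfrac12$ and $\alpha>\tfrac12$; the control-operator bound of the linear step; and the bilinear estimate $\|\mathscr{D}\partial_x(w_1 w_2)\|_X\lesssim\|w_1\|_X\|w_2\|_X$ for the KdV nonlinearity, in the spirit of Colliander--Kenig \cite{CK} and Holmer \cite{Holmer}. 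Combining these with the smallness hypothesis $\|\phi\|_{L^2(\R^+)}+\|\phi_T\|_{L^2(\R^+)}\le\delta$ yields $\|\Lambda(w)\|_X\le C\delta+C\|w\|_X^2$ and $\|\Lambda(w_1)-\Lambda(w_2)\|_X\le C(\|w_1\|_X+\|w_2\|_X)\|w_1-w_2\|_X$, so that for $R\sim\delta$ small enough $\Lambda$ contracts $B_R$. The unique fixed point $u$ then obeys Definition \ref{def_1}, satisfies $u(\cdot,T)=\phi_T$ by construction of $f(w)$, and rearranging the identity $u=\Lambda(u)$ produces the displayed formula with $h$ exactly as stated.

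The main obstacle I expect is the interaction between the controllability estimate and the low-regularity functional setting. The forcing operator $\mathscr{L}_+^\lambda$ and the boundary trace $\partial_x w^2(0,\cdot)$ of the nonlinear term must be estimated so that $f(w)$ genuinely stays in $H^{1/3}(\R^+_t)$ while the admissible range of the parameter $\lambda$ (dictated by the well-posedness of \eqref{eqnew21}) is respected; this is delicate precisely because $s=0$ lies below the trace threshold $s>\tfrac12$, which is where the $D_\alpha$ component of the norm and the sharp time-trace estimates become indispensable. A secondary difficulty is guaranteeing that the nonlinear correction $\tfrac12\mathscr{D}\partial_x w^2(\cdot,T)-\mathscr{L}_+^\lambda(\tfrac12\theta\,\mathscr{D}\partial_x w^2(0,\cdot))(\cdot,T)$ to the target remains in the class on which the control operator $\Phi$ acts, which is the structural reason the result is \emph{local} and the smallness of the data, rather than a mere openness argument, is required.
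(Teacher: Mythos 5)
Your proposal follows essentially the same route as the paper: a HUM-based bounded linear control operator (the paper's $\Lambda_l$, obtained from the observability inequality $\left\|\varphi_T\right\|_{L^2(\R^+_x)}^2 \leq C\left\|\partial_x^2\varphi(0,\cdot)\right\|_{H^{-1/3}(0,T)}^2$ of Proposition \ref{proposition2}, proved by compactness--uniqueness and unique continuation, yielding Theorem \ref{controllinear}), combined with a Banach fixed-point argument for the map $\Gamma$ built from Holmer's boundary-forcing representation \eqref{formulasolutionright}, where the control is corrected by the nonlinear final-state offset $f=\Lambda_l\bigl(\phi,\ \phi_T+\tfrac12\theta(T)\mathscr{D}\partial_x u^2(\cdot,T)\bigr)$ and the smallness hypothesis on $\delta$ closes the self-map and contraction estimates via Lemmas \ref{estimatesgroup}, \ref{estimateinhom}, \ref{estimateforcing} and the bilinear estimate of Holmer. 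The only minor discrepancy is your suggestion that Rosier's Theorem D already gives solvability of the operational relation \eqref{eqnew16} for every $L^2$ target: the paper uses that result only to conclude $\mathcal{A}_r^0(\phi,T)\neq\emptyset$, and establishes the linear step through the observability argument you subsequently describe, so your proof skeleton coincides with the paper's.
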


The second main result considers the control problem \eqref{h2} when only one control input is available. The result is the following. 
\begin{theorem}\label{main-b}
Let $T>0$. Then, there exists $\delta>0$ such that for any $\phi\in L^2(\R^-_x)$ and  $\phi_T  \in \mathcal{A}_l^0(\phi,T)$, verifying
$\|\phi\|_{L^2(\R^-_x)}+\|\phi_T\|_{L^2(\R^-_x)}\leq \delta,$ the system \eqref{h2} admits a unique solution $u \in B_r\subset  X=X_{0,b}\cap D_{\alpha}$ operational exactly controllable at time $T$. Moreover, we have:
\begin{enumerate}
\item[(i)]  If $g_1=0$, one can find  $g_2 \in L^2(\R^+_t)$ such that 
\begin{equation*}
    \phi_T(x)= e^{-T (\partial_x+\partial_x^3)} \phi(x)-\frac{1}{2}  \mathscr{D} \partial_x u^2(x, T)+ \mathscr{L}_{-}^{\lambda_1} h_1(x, T,\phi)+\mathscr{L}_{-}^{\lambda_2} h_2(x, T,\phi)
\end{equation*}
where
\begin{equation*}
\left[\begin{array}{l}
h_1(t,\phi) \\
h_2(t,\phi)
\end{array}\right]=M\left[\begin{array}{c}
-\left.\theta(t) e^{-t (\partial_x+ \partial_x^3)} \phi\right|_{x=0}+\frac{1}{2}  \mathscr{D} \partial_x u^2(0, t) \\
\theta(t) \mathcal{I}_{1 / 3}\left(g_2-\left.\theta \partial_x e^{-t (\partial_x+\partial_x^3)} \phi\right|_{x=0}+\frac{1}{2} \theta \partial_x \mathscr{D} \partial_x u^2(0, \cdot)\right)(t)
\end{array}\right].
\end{equation*}

\item[(ii)]  If $g_2=0$, one can find  $g_1 \in H^{\frac{1}{3}}(\R^+_t)$ such that 
\begin{equation*}
    \phi_T(x)= e^{-T (\partial_x+\partial_x^3)} \phi(x)-\frac{1}{2}  \mathscr{D} \partial_x u^2(x, T)+ \mathscr{L}_{-}^{\lambda_1} h_1(x, T,\phi)+ \mathscr{L}_{-}^{\lambda_2} h_2(x, T,\phi),
\end{equation*}
where
\begin{equation*}
\left[\begin{array}{l}
h_1(t,\phi) \\
h_2(t,\phi)
\end{array}\right]=M\left[\begin{array}{c}
g_1(t)-\left.\theta(t) e^{-t (\partial_x+ \partial_x^3)} \phi\right|_{x=0}+\frac{1}{2}  \mathscr{D} \partial_x u^2(0, t) \\
\theta(t) \mathcal{I}_{1 / 3}\left(-\left.\theta \partial_x e^{-t (\partial_x+\partial_x^3)} \phi\right|_{x=0}+\frac{1}{2} \theta \partial_x \mathscr{D} \partial_x u^2(0, \cdot)\right)(t)
\end{array}\right].
\end{equation*}
\end{enumerate}
Here, $M$ is a matrix given by \eqref{matrixA}.
\end{theorem}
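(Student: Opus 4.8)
The plan is to mirror the argument for Theorem \ref{main-a}, replacing the right half-line forcing operator $\mathscr{L}_+^\lambda$ by its left half-line counterpart $\mathscr{L}_-^\lambda$ and accounting for the two boundary conditions $u(0,t)=g_1(t)$ and $\partial_x u(0,t)=g_2(t)$ carried by \eqref{h2}. Following the discussion after Question $\mathcal{A}'$, I would base everything on the integral representation adapted to the left half-line (as in \cite{Holmer}): for general initial data $\phi$ the solution splits into the free evolution $e^{-t(\partial_x+\partial_x^3)}\phi$, the nonlinear Duhamel contribution $-\tfrac12\mathscr{D}\partial_x u^2$, and a forcing part. The boundary traces $e^{-t(\partial_x+\partial_x^3)}\phi|_{x=0}$ and $\partial_x e^{-t(\partial_x+\partial_x^3)}\phi|_{x=0}$ of the free evolution, together with the traces of $\mathscr{D}\partial_x u^2$ at $x=0$, are known quantities that the forcing operators must correct for; this is exactly why they appear with the matching signs inside $h_1$ and $h_2$, and why $e^{-T(\partial_x+\partial_x^3)}\phi$ survives in the final representation. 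In effect the problem reduces to driving the forcing part from zero data to the corrected target.

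Next I would record the forcing-operator representation of the linear left half-line evolution. With zero forcing-part data the linear solution is a superposition $\mathscr{L}_-^{\lambda_1}(\cdot)+\mathscr{L}_-^{\lambda_2}(\cdot)$ of two forcing operators whose arguments are pinned down by imposing the two boundary traces. Matching $u(0,t)=g_1$ and $\partial_x u(0,t)=g_2$ leads to a $2\times 2$ linear system whose coefficient matrix is assembled from the boundary values of $\mathscr{L}_-^{\lambda_1},\mathscr{L}_-^{\lambda_2}$ and their $x$-derivatives at $x=0$; inverting this system produces the matrix $M$ of \eqref{matrixA}. The fractional integration $\mathcal{I}_{1/3}$ enters because the derivative trace $g_2$ naturally lives one level of time regularity below $g_1$ (as reflected in \eqref{h4} for $s=0$, where $g_1\in H^{1/3}$ and $g_2\in L^2$), so $\mathcal{I}_{1/3}$ lifts $g_2$ to the $H^{1/3}$ scale before $M$ is applied. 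This is precisely the content of the bracketed vectors in (i) and (ii): in case (i) I set $g_1=0$ and retain $g_2\in L^2(\R^+_t)$ as the single control, while in case (ii) I set $g_2=0$ and retain $g_1\in H^{1/3}(\R^+_t)$.

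With the representation fixed, the definition of the admissible class $\mathcal{A}_l^0(\phi,T)$ guarantees that the \emph{linear} operational controllability relation, i.e. the left half-line analogue of \eqref{eqnew19} reading $\mathscr{L}_-^{\lambda_1}h_1(\cdot,T)+\mathscr{L}_-^{\lambda_2}h_2(\cdot,T)=\phi_T-e^{-T(\partial_x+\partial_x^3)}\phi$, is solvable for the chosen single control and suitable $\lambda_1,\lambda_2$. To pass to the nonlinear system I would set up a fixed-point scheme in the modified Bourgain space $X=X_{0,b}\cap D_\alpha$ of Definition \ref{def_1}. Given $u$ in a small ball of $X$, I treat $-\tfrac12\mathscr{D}\partial_x u^2$ and its boundary traces as known source terms, solve the operational controllability relation at time $T$ for the single control (thereby fixing $h_1,h_2$ through $M$), and define $\Lambda u$ to be the corresponding solution of the representation. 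Using the bilinear and smoothing estimates underlying the half-line well-posedness theory together with the boundedness of $\mathscr{L}_-^{\lambda_j}$ and $M$ on the relevant spaces, one checks that $\Lambda$ maps the small ball into itself and is a contraction once $\|\phi\|_{L^2}+\|\phi_T\|_{L^2}\le\delta$ with $\delta$ small; its unique fixed point is the desired solution, and uniqueness in $X$ follows from the contraction.

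The main obstacle I anticipate is the solvability of the operational controllability relation with a \emph{single} control. System \eqref{h2} is well-posed only when both boundary conditions are prescribed, yet here one of them is forced to vanish, so I must verify that $M$ remains invertible and that one function $g_2$ (resp. $g_1$), entering both $h_1$ and $h_2$ through $M$, still generates enough of the target space to reach every $\phi_T\in\mathcal{A}_l^0(\phi,T)$. The bookkeeping of time regularity is the delicate point: $\mathcal{I}_{1/3}$ must convert $g_2\in L^2$ to the $H^{1/3}$ scale compatibly with the mapping properties of $\mathscr{L}_-^{\lambda_1},\mathscr{L}_-^{\lambda_2}$, and the admissible values of $\lambda_1,\lambda_2$ are constrained exactly by the well-posedness conditions flagged after \eqref{eqnew19}. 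Once the linear single-input relation is solved, the nonlinear closure is routine by smallness, so the heart of the matter is this single-input solvability together with the correct choice of the two parameters $\lambda_1,\lambda_2$.
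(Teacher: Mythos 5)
Your nonlinear closure is essentially the paper's own: both you and the authors substitute a linear control operator into the left half-line representation \eqref{formulasolutionleft} and run a contraction on a small ball of $X_{0,b}\cap D_\alpha$, using Lemmas \ref{estimatesgroup}, \ref{estimateinhom}, \ref{estimateforcing} and the bilinear estimate \cite[Lemma 5.10]{Holmer}; that part of your outline is sound. The genuine gap is in the linear step. You claim that membership $\phi_T\in\mathcal{A}_l^0(\phi,T)$ ``guarantees that the linear operational controllability relation is solvable for the chosen single control,'' and then in your final paragraph you concede that this single-input solvability is the heart of the matter — without ever proving it. But $\mathcal{A}_l^0(\phi,T)$ is defined by reachability with \emph{two} controls $(g_1,g_2)$, so it says nothing about the constrained configurations $g_1=0$ or $g_2=0$ of items (i) and (ii). Likewise, the invertibility of the matrix $M$ in \eqref{matrixA} is a fact about Holmer's representation formula (how $h_1,h_2$ are recovered from prescribed traces) and does not address whether a single boundary input reaches every admissible target. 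As written, your scheme cannot even define the single-input control operators $\Lambda_{r,N}$, $\Lambda_{r,D}$ that your fixed-point map needs, so the proof does not close.

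The paper fills exactly this hole by HUM. For the Neumann case it proves the observability inequality \eqref{observabilityNeu}, namely $\left\|\varphi_T\right\|_{L^2(\R^-_x)}^2\leq C\left\|\partial_x\varphi(0,\cdot)\right\|_{L^2(0,T)}^2$ for the backward system \eqref{adjointleft}, by a compactness–uniqueness argument: trace and smoothing estimates (the analogues of \eqref{dependece}–\eqref{derivativetrace} together with Theorem \ref{kato}) give compactness of the sequence of adjoint solutions, a multiplier identity (multiplying the equation by $t\varphi$) upgrades this to strong $L^2$ convergence of the final data, and the limit solves an overdetermined problem with $\varphi(0,t)=\partial_x\varphi(0,t)=0$. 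The decisive ingredient, entirely absent from your proposal, is the unique continuation result, Lemma \ref{lemaespectralcaso1}: if observability failed, the unobservable subspace $N_T$ would carry an eigenfunction of the complexified generator, i.e.\ a nontrivial $H^3$ solution of $\lambda\varphi+\varphi'+\varphi'''=0$ on $\R^-$ with $\varphi(0)=\varphi'(0)=0$, whose Fourier transform computes to
\begin{equation*}
\widehat{\varphi}(\xi)=\frac{\varphi''(0)}{\lambda-i\xi-i\xi^3},
\end{equation*}
and by Paley--Wiener this would have to be entire, which it is not; hence $N_T=\{0\}$. This observability inequality makes the functional $\mathcal{J}$ coercive, yielding Theorems \ref{controllinearonecontrolneuman} and \ref{controllinearonecontroldirichelt} and with them the operators $\Lambda_{r,N}$, $\Lambda_{r,D}$; only then does the contraction argument you describe go through. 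You would need to supply this observability/unique-continuation step (or an equivalent single-input linear controllability mechanism) for your proposal to constitute a proof.
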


Both theorems follow from classical control tools: the Dolecki–Russell duality \cite{DoRu} in Lions’ framework \cite{Lions}, reducing the problem to an observability inequality, and a fixed-point argument for the full system. Moreover, the explicit solution formulas \cite{Bonahalf1,fokas,Holmer} allow us to precisely characterize the control and controllable solutions, in line with Definitions \ref{def-a}, \ref{def-b}, and \ref{def-c}, as follows:
\begin{remark}\label{main-c} 
Let $\frac{3}{4} < s <1$ and $f \in H^{\frac13}(0,T)$ such that system 
\begin{equation*}
\begin{cases}\partial_t u+ \partial_x u+\partial_x^3 u=0, & \text { for }(x, t) \in(0,+\infty) \times(0, T), \\ u(0, t)=f(t), & \text { for } t \in(0, T), \\ u(x, 0)=0, & \text { for } x \in(0,+\infty), 
\end{cases}
\end{equation*}
is (operational) exactly controllable at time $T$, via forcing operator, boundary operator, and UTM operator (see Definitions \ref{def-a}, \ref{def-b} and \ref{def-c}). Then, the control $f$ satisfies the following relations (see Remark \ref{rmk2} item (c))
\begin{multline*}
\frac{6\pi}{\Gamma(\lambda)} \int_{x}^{\infty}\int_0^T (y-x)^{\lambda-1}  A\left(\frac{y}{\left(T-t^{\prime}\right)^{1 / 3}}\right) \frac{\mathcal{I}_{-\frac23 - \frac{\lambda}{3}} f\left(t^{\prime}\right)}{\left(T-t^{\prime}\right)^{1 / 3}} d t^{\prime} d y \\
-2Re \left(\int_1^{\infty}\int_0^{\infty} e^{i \mu^3 T-i \mu T} e^{-\left(\frac{\sqrt{3 \mu^2-4}+i \mu}{2}\right) x}\left(3 \mu^2-1\right)  e^{-\left(\mu^3 i-i \mu\right) \xi} f(\xi) d \xi d \mu\right)=0
\end{multline*}
or 
\begin{multline*}
2Re \left(\int_1^{\infty}\int_0^{\infty} e^{i \mu^3 T-i \mu T} e^{-\left(\frac{\sqrt{3 \mu^2-4}+i \mu}{2}\right) x}\left(3 \mu^2-1\right)  e^{-\left(\mu^3 i-i \mu\right) \xi} f(\xi) d \xi d \mu\right) \\
+ \int_{\partial D^{+}_R}\int_0^T \mathrm{e}^{\mathrm{i} k x-\mathrm{i} (k-k^3) (T-t^{\prime})} (3 k^2-1)  f\left(t^{\prime}\right)d t^{\prime} d k = 0 
\end{multline*}
or 
\begin{multline*}
 \frac{6\pi}{\Gamma(\lambda)} \int_{x}^{\infty}\int_0^T (y-x)^{\lambda-1}  A\left(\frac{y}{\left(T-t^{\prime}\right)^{1 / 3}}\right) \frac{\mathcal{I}_{-\frac23 - \frac{\lambda}{3}} f\left(t^{\prime}\right)}{\left(T-t^{\prime}\right)^{1 / 3}} d t^{\prime} d y \\
 +   \int_{\partial D^{+}_R}\int_0^T \mathrm{e}^{\mathrm{i} k x-\mathrm{i} (k-k^3) (T-t^{\prime})} (3 k^2-1)  f\left(t^{\prime}\right)d t^{\prime} d k = 0 
\end{multline*}
Here, $A$ is the Airy function given by \eqref{airyfunction} and $\mathcal{I}_{-\frac23 - \frac{\lambda}{3}}$ is the Caputo fractional derivative given by \eqref{airyfunction} and \eqref{eq:IO}, respectively. Moreover, the region $D^+_R$ is defined by \eqref{regionD}. 
\end{remark}
\subsection{Paper's outline}  We conclude the introduction with the paper’s structure: Section \ref{sec2} reviews well-posedness theory; Sections \ref{sec3} and \ref{sec4} prove Theorems \ref{main-a} and \ref{main-b}, establishing exact controllability on the right and left half-lines, respectively; Section \ref{sec6} discusses perspectives and open problems. Appendix \ref{app1} gives boundary forcing operator formulas and key estimates.

\section{Well-posedness theory}\label{sec2}
In this section, we are interested in revisiting the well-posedness theory presented in the classical papers \cite{Bonahalf1,Bonahalf2,bonahalf3,CK,fokas,fokas2,Holmer}. These preliminary analyses will be paramount for us to present the main novelty of this work.
\subsection{Preliminaries} Let us consider $s\geq 0$. In this case,  we say that $\phi \in H^s(\mathbb{R}^+)$ if exists $\tilde{\phi}\in H^s(\mathbb{R})$ such that $\phi=\tilde{\phi}|_{\R+}$.  Moreover, we set $$\|\phi\|_{H^s(\mathbb{R}^+)}:=\inf\limits_{\tilde{\phi}}\|\tilde{\phi}\|_{H^{s}(\mathbb{R})}.$$ Here,  $$H_0^s(\mathbb{R}^+)=\Big\{\phi \in H^{s}(\mathbb{R}^+);\,\text{supp} (\phi) \subset[0,+\infty) \Big\},$$ otherwise, that is, $s<0$, define $H^s(\mathbb{R}^+)$ and $H_0^s(\mathbb{R}^+)$  as the dual space of $H_0^{-s}(\mathbb{R}^+)$ and  $H^{-s}(\mathbb{R}^+)$, respectively. The first results summarize useful properties of the Sobolev spaces on the half-line, and the proofs can be found in \cite{CK}.

\begin{lemma}
	For all $f\in H^s(\mathbb{R})$  with $-\frac{1}{2}<s<\frac{1}{2}$ we have
	\begin{equation*}
	\|\chi_{(0,+\infty)}f\|_{H^s(\mathbb{R})}\lesssim  \|f\|_{H^s(\mathbb{R})}.
	\end{equation*}
\end{lemma}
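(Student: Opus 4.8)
The plan is to reduce the estimate to the Sobolev--Slobodeckij (Gagliardo) characterization of $H^s(\mathbb{R})$ and then absorb the resulting boundary contribution by a fractional Hardy inequality, treating the three sign regimes $s=0$, $0<s<\tfrac12$, and $-\tfrac12<s<0$ separately. The case $s=0$ is immediate, since $\|\chi_{(0,+\infty)}f\|_{L^2}\le\|f\|_{L^2}$. For $0<s<\tfrac12$ I would start from the equivalence
\[
\|g\|_{H^s(\mathbb{R})}^2 \simeq \|g\|_{L^2(\mathbb{R})}^2 + \iint_{\mathbb{R}^2} \frac{|g(x)-g(y)|^2}{|x-y|^{1+2s}}\,dx\,dy,
\]
applied to $g=\chi_{(0,+\infty)}f$. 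The $L^2$ part is controlled trivially, so everything reduces to the double integral.

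To estimate the seminorm, I would split $\mathbb{R}^2$ according to the signs of $x$ and $y$. On $\{x>0,\,y>0\}$ the integrand equals $|f(x)-f(y)|^2/|x-y|^{1+2s}$ and is dominated by the full Gagliardo seminorm of $f$; on $\{x<0,\,y<0\}$ it vanishes; and on each mixed region $\{x>0>y\}$, $\{y>0>x\}$ the integrand is $|f(x)|^2/|x-y|^{1+2s}$. Performing the inner integration $\int_{-\infty}^{0}(x-y)^{-1-2s}\,dy = x^{-2s}/(2s)$ (convergent exactly because $s>0$), the mixed regions collapse to a constant multiple of $\int_0^\infty |f(x)|^2\,x^{-2s}\,dx$. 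This weighted term is then absorbed by the one-dimensional fractional Hardy inequality, valid precisely for $0<s<\tfrac12$,
\[
\int_{\mathbb{R}} \frac{|f(x)|^2}{|x|^{2s}}\,dx \;\lesssim\; \iint_{\mathbb{R}^2} \frac{|f(x)-f(y)|^2}{|x-y|^{1+2s}}\,dx\,dy \;\lesssim\; \|f\|_{H^s(\mathbb{R})}^2,
\]
which closes the estimate in this range.

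For $-\tfrac12<s<0$ I would pass to the adjoint. Since $\chi_{(0,+\infty)}$ is a real idempotent multiplier, $\langle \chi_{(0,+\infty)}f, g\rangle = \langle f, \chi_{(0,+\infty)}g\rangle$, and $\big(H^s(\mathbb{R})\big)^\ast = H^{-s}(\mathbb{R})$, boundedness of multiplication by $\chi_{(0,+\infty)}$ on $H^s(\mathbb{R})$ is equivalent to its boundedness on $H^{-s}(\mathbb{R})$; as $0<-s<\tfrac12$, the previous two paragraphs apply and give the claim.

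I expect the Hardy step to be the crux: the sharp range $|s|<\tfrac12$ is dictated entirely by it, the constant degenerating as $s\to\tfrac12$, so one must resist obtaining the bound by naive interpolation between $s=0$ and $s=\tfrac12$, which fails at the endpoint. An alternative I would keep in reserve is to write $\chi_{(0,+\infty)}=\tfrac12\big(1+\operatorname{sgn}\big)$ and note that multiplication by $\operatorname{sgn}(x)$ corresponds on the Fourier side to a constant multiple of the Hilbert transform; the estimate then follows from the $L^2$ boundedness of $\langle\xi\rangle^{s}H\langle\xi\rangle^{-s}$, equivalently from the fact that $\langle\xi\rangle^{2s}$ is a Muckenhoupt $A_2$ weight exactly when $-\tfrac12<s<\tfrac12$, which transparently reproduces the sharp range.
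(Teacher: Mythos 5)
Your proposal is correct, but it is worth noting that the paper gives no proof of this lemma at all: it is quoted from Colliander--Kenig \cite{CK}, where (as in Jerison--Kenig's work on characteristic functions of Lipschitz domains as $H^s$ multipliers) the argument is essentially the one you keep ``in reserve'' --- write $\chi_{(0,+\infty)}=\tfrac12(1+\operatorname{sgn})$, observe that multiplication by $\operatorname{sgn}(x)$ is conjugate under the Fourier transform to the Hilbert transform, and invoke weighted $L^2$ boundedness of the Hilbert transform for the $A_2$ weight $\langle\xi\rangle^{2s}$, which lies in $A_2$ exactly for $-\tfrac12<s<\tfrac12$. Your primary route --- the Gagliardo characterization, explicit evaluation of the mixed-sign regions to produce $\int_0^\infty |f(x)|^2x^{-2s}\,dx$, absorption via the one-dimensional fractional Hardy inequality (valid precisely for $0<s<\tfrac12$), and duality for $-\tfrac12<s<0$ --- is a genuinely different, self-contained real-variable argument, and it has the merit of making visible exactly where each half of the restriction $|s|<\tfrac12$ enters: the inner integral $\int_{-\infty}^0(x-y)^{-1-2s}\,dy$ converges only for $s>0$, and Hardy degenerates at $s=\tfrac12$. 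Two small points to tidy, neither affecting validity: on the region $\{y>0>x\}$ the integrand is $|f(y)|^2/|x-y|^{1+2s}$ (symmetric to the case you wrote, yielding the same weighted integral); and for negative $s$ you should state explicitly that $\chi_{(0,+\infty)}f$ is \emph{defined} as the continuous extension of pointwise multiplication from Schwartz functions, i.e.\ via $\langle \chi_{(0,+\infty)}f, g\rangle := \langle f, \chi_{(0,+\infty)}g\rangle$, so that your adjoint computation is the proof itself rather than merely an equivalence of bounds. In comparison, the Fourier-side $A_2$ proof handles the whole range $-\tfrac12<s<\tfrac12$ in one stroke but rests on weighted Calder\'on--Zygmund theory, whereas your decomposition buys elementarity and sharp-range transparency at the cost of a three-case split.
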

\begin{lemma}
	If $\frac{1}{2}<s<\frac{3}{2}$ the following statements are valid:
	\begin{enumerate}
		\item [(a)] $H_0^s(\R^+)=\big\{f\in H^s(\R^+);f(0)=0\big\},$\medskip
		\item [(b)] If  $f\in H^s(\R^+)$ with $f(0)=0$, then $\|\chi_{(0,+\infty)}f\|_{H_0^s(\R^+)}\lesssim \|f\|_{H^s(\R^+)}$.
	\end{enumerate}
\end{lemma}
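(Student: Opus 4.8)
The plan is to reduce both statements to the preceding lemma (the boundedness of $f \mapsto \chi_{(0,+\infty)}f$ on $H^s(\R)$ for $-\frac12 < s < \frac12$) by differentiating once, and to extract a pointwise boundary value from the Sobolev embedding $H^s(\R) \hookrightarrow C^0(\R)$, which is available precisely because $s>\frac12$. In fact (a) will follow almost immediately once (b) is in hand, so I would prove (b) first.

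For part (a), the inclusion $H_0^s(\R^+) \subset \{f : f(0)=0\}$ is the easy direction: an element of $H_0^s(\R^+)$ is the restriction of some $F \in H^s(\R)$ with $\supp F \subset [0,+\infty)$; since $s>\frac12$, $F$ is continuous, and continuity together with $F\equiv 0$ on $(-\infty,0)$ forces $F(0)=0$, that is $f(0)=0$. The reverse inclusion is exactly the content of part (b): if $f \in H^s(\R^+)$ with $f(0)=0$, then $\chi_{(0,+\infty)}f \in H^s(\R)$, is supported in $[0,+\infty)$, and restricts to $f$, which identifies $f$ as an element of $H_0^s(\R^+)$.

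For part (b) the crucial point is the distributional derivative of the zero extension. For smooth $f$ with $f(0)=0$ one has $(\chi_{(0,+\infty)}f)' = \chi_{(0,+\infty)}f'$, the boundary term $f(0)\delta_0$ dropping out precisely because $f(0)=0$. Since $\frac12<s<\frac32$ gives $-\frac12<s-1<\frac12$, the preceding lemma applies to $f'$ and yields $\|\chi_{(0,+\infty)}f'\|_{H^{s-1}(\R)} \lesssim \|f'\|_{H^{s-1}(\R^+)} \le \|f\|_{H^s(\R^+)}$, where the last inequality is just boundedness of differentiation $H^s(\R^+)\to H^{s-1}(\R^+)$. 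Combining this with the trivial identity $\|\chi_{(0,+\infty)}f\|_{L^2(\R)} = \|f\|_{L^2(\R^+)}$ and the Fourier-side equivalence $\|u\|_{H^s(\R)}^2 \simeq \|u\|_{L^2(\R)}^2 + \|u'\|_{H^{s-1}(\R)}^2$ (which holds for every real $s$ because $\langle\xi\rangle^{2s} \simeq 1 + |\xi|^2\langle\xi\rangle^{2(s-1)}$) gives $\|\chi_{(0,+\infty)}f\|_{H^s(\R)} \lesssim \|f\|_{H^s(\R^+)}$.

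To pass from smooth $f$ to a general $f\in H^s(\R^+)$ with $f(0)=0$, I would argue by density: approximate $f$ by restrictions $g_n$ of Schwartz functions in the $H^s(\R^+)$ norm, correct the trace by setting $\tilde g_n = g_n - g_n(0)\eta$ for a fixed cutoff $\eta\in C_c^\infty(\R)$ with $\eta(0)=1$, so that $\tilde g_n(0)=0$ and, since the trace map is continuous on $H^s(\R^+)$ for $s>\frac12$, $g_n(0)\to f(0)=0$ and $\tilde g_n\to f$ in $H^s(\R^+)$. Applying the estimate above to each smooth $\tilde g_n$ shows that $\chi_{(0,+\infty)}\tilde g_n$ is Cauchy in $H^s(\R)$ with limit $\chi_{(0,+\infty)}f$, which both establishes the bound for $f$ and places $\chi_{(0,+\infty)}f$ in $H^s(\R)$. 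The main obstacle is exactly this justification of the derivative identity for rough $f$: in the sub-range $\frac12<s<1$ the derivative $f'$ lives in the negative-order space $H^{s-1}(\R^+)$, so the naive integration by parts is not literally valid, and one must route through the density and trace-correction argument to legitimately discard the $f(0)\delta_0$ term.
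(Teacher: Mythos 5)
Your proof is correct, and it is worth noting that the paper itself offers no proof of this lemma at all: it is stated as a known fact with the proofs deferred to the Colliander--Kenig reference \cite{CK}. Your argument is essentially the standard one from that source: reduce to the preceding cutoff lemma via the identity $(\chi_{(0,+\infty)}f)'=\chi_{(0,+\infty)}f'+f(0)\delta_0$, where the hypothesis $f(0)=0$ (meaningful because $s>\frac12$ gives continuity and a bounded trace) kills the delta term, combined with the Fourier-side equivalence $\|u\|_{H^s(\R)}^2\simeq\|u\|_{L^2(\R)}^2+\|u'\|_{H^{s-1}(\R)}^2$ and the observation that $s-1\in(-\frac12,\frac12)$; part (a) then follows exactly as you say, with the forward inclusion from continuity of supported extensions. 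You also correctly identify the one point that needs care, namely justifying the derivative identity for rough $f$, and your density-plus-trace-correction scheme handles it.

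One small tightening is advisable in the density step. If you approximate $f$ only in the $H^s(\R^+)$ restriction norm by Schwartz restrictions $g_n$, then to apply your smooth-case estimate to differences $\tilde g_n-\tilde g_m$ you need that estimate with the \emph{restriction} norm on the right-hand side, and proving that for smooth functions already runs into the same issue (one would have to apply the cutoff lemma to a rough negative-order extension of the derivative, which requires knowing that $\chi_{(0,+\infty)}u=0$ for $u\in H^{s-1}(\R)$ supported in $(-\infty,0]$). The cleanest repair, entirely within your framework, is to fix a near-optimal global extension $\tilde f\in H^s(\R)$ of $f$ (which automatically satisfies $\tilde f(0)=0$ by continuity of the trace), approximate $\tilde f$ in $H^s(\R)$ by Schwartz functions $g_n$, and perform your correction $\tilde g_n=g_n-g_n(0)\eta$ there; then $(\tilde g_n)$ is Cauchy in the full-line norm, the smooth-case estimate applies verbatim to differences, the limit of $\chi_{(0,+\infty)}\tilde g_n$ is identified with $\chi_{(0,+\infty)}f$ through the $L^2$ convergence, and taking the infimum over extensions $\tilde f$ yields the stated bound by $\|f\|_{H^s(\R^+)}$. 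This is a matter of bookkeeping rather than a genuine gap; the mathematical content of your proposal is sound.
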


\begin{lemma}
	If $f\in  H_0^s(\mathbb{R}^+)$ with $s\in \R$, we then have
		\begin{equation*}
	\|\psi f\|_{H_0^s(\mathbb{R}^+)}\lesssim \|f\|_{H_0^s(\mathbb{R}^+)}.
	\end{equation*}
	
\end{lemma}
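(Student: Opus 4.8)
The plan is to reduce everything to the corresponding multiplier estimate on the whole line, namely that multiplication by the smooth cutoff $\psi$ (which we take to have bounded derivatives of every order, e.g.\ $\psi\in C_c^\infty(\R)$) is bounded on $H^s(\R)$ for \emph{every} $s\in\R$, and then to transfer this to the half-line spaces $H_0^s(\R^+)$ according to the sign of $s$. This fits the paper's definitions, in which $H_0^s(\R^+)$ is a support-constrained subspace of $H^s$ when $s\geq 0$ and a dual space when $s<0$.

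First I would establish the whole-line estimate $\|\psi u\|_{H^s(\R)}\lesssim\|u\|_{H^s(\R)}$, uniformly in $s\in\R$. On the Fourier side $\widehat{\psi u}=\widehat\psi*\widehat u$, and since $\psi$ is smooth with all derivatives bounded, $\widehat\psi$ is rapidly decreasing. Combining this with Peetre's inequality $\langle\xi\rangle^{s}\lesssim\langle\xi-\eta\rangle^{|s|}\langle\eta\rangle^{s}$ bounds $\langle\xi\rangle^{s}|\widehat{\psi u}(\xi)|$ by the convolution of the integrable kernel $\langle\zeta\rangle^{|s|}|\widehat\psi(\zeta)|$ with $\langle\eta\rangle^{s}|\widehat u(\eta)|$, and Young's inequality ($L^1*L^2\to L^2$) then closes the estimate. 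This is the only genuinely analytic step; everything afterwards is bookkeeping with the definitions.

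Next, for $s\geq 0$ I would use that an element $f\in H_0^s(\R^+)$, extended by zero, belongs to $H^s(\R)$ with $\supp f\subseteq[0,+\infty)$. Since $\supp(\psi f)\subseteq\supp\psi\cap\supp f\subseteq[0,+\infty)$, the product $\psi f$ again lies in $H_0^s(\R^+)$, and the whole-line estimate gives $\|\psi f\|_{H_0^s(\R^+)}=\|\psi f\|_{H^s(\R)}\lesssim\|f\|_{H^s(\R)}=\|f\|_{H_0^s(\R^+)}$. The same argument, applied to an arbitrary $H^s(\R)$-extension of a function and then taking the infimum over extensions, shows that multiplication by $\psi$ is likewise bounded on the restriction space $H^\sigma(\R^+)$ for every $\sigma\geq 0$; this auxiliary fact is precisely what feeds the negative-order case.

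Finally, for $s<0$ I would argue by duality, exploiting that $H_0^s(\R^+)=\big(H^{-s}(\R^+)\big)'$ with $-s>0$. Taking $\psi$ real-valued, distributional multiplication by the smooth function $\psi$ satisfies $\langle\psi f,g\rangle=\langle f,\psi g\rangle$ for $g\in H^{-s}(\R^+)$, so that
\begin{equation*}
|\langle\psi f,g\rangle|=|\langle f,\psi g\rangle|\leq\|f\|_{H_0^s(\R^+)}\,\|\psi g\|_{H^{-s}(\R^+)}\lesssim\|f\|_{H_0^s(\R^+)}\,\|g\|_{H^{-s}(\R^+)},
\end{equation*}
where the last inequality is the $\sigma=-s\geq 0$ case from the previous step. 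Taking the supremum over $\|g\|_{H^{-s}(\R^+)}\leq 1$ yields $\|\psi f\|_{H_0^s(\R^+)}\lesssim\|f\|_{H_0^s(\R^+)}$, completing the proof for all $s\in\R$. The main point requiring care is this negative-order case: one must check that $\psi f$ is genuinely well defined as an element of $\big(H^{-s}(\R^+)\big)'$ and that the identity $\langle\psi f,g\rangle=\langle f,\psi g\rangle$ really is the distributional product (with the appropriate conjugation convention should $\psi$ be complex-valued). Once the stable auxiliary estimate on $H^{-s}(\R^+)$ is in hand, the bound follows immediately.
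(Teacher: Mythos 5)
Your proof is correct. Note that the paper does not actually prove this lemma—it is stated with the remark that the proofs can be found in \cite{CK}—and your argument is precisely the standard one underlying that reference: the whole-line bound $\|\psi u\|_{H^s(\R)}\lesssim\|u\|_{H^s(\R)}$ via Peetre's inequality and Young's convolution inequality, transferred to $H_0^s(\R^+)$ by support preservation for $s\geq 0$ and, for $s<0$, by duality against the multiplier bound on the restriction space $H^{-s}(\R^+)$, which matches the paper's definition of $H_0^s(\R^+)$ as $\bigl(H^{-s}(\R^+)\bigr)'$ in the negative range; you also correctly flag the only delicate points (well-definedness of $\psi f$ as an element of the dual space and the conjugation convention).
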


Let us consider the well-known Bourgain theory \cite{Bourgain}. We denote by $X^{s,b}$ the Fourier transform space associated with the linear KdV equation, precisely, the space $X^{s,b}$  is the completion of $S'(\mathbb{R}^2)$ concerning the norm
\begin{equation*}\label{Bourgain-norm}
\|w\|_{X^{s,b}(\phi)}=\|\langle\xi\rangle^s\langle\tau-\xi^3\rangle^b\hat{w}(\xi,\tau) \|_{L_{\tau}^2L^2_{\xi}}.
\end{equation*}
To obtain our results, we also need to define the following auxiliary modified Bougain space. Let  $U^{s,b}$ and $V^{\alpha}$ the completion of $S'(\R^2)$ with respect to the norms
$$
\|w\|_{U^{s,b}}=\left(\int\int \langle \tau\rangle^{2s/3} \langle \tau-\xi^3\rangle^{2b} |\widehat{w}(\xi,\tau)|^2d\xi d\tau\right)^{\frac{1}{2}}\quad \text{and} \quad \|w\|_{V^{\alpha}}=\left(\int\int  \langle \tau\rangle^{2\alpha} |\widehat{w}(\xi,\tau)|^2d\xi d\tau\right)^{\frac{1}{2}}.
$$
Next, nonlinear estimates, in the context of the KdV equation, for $b<\frac{1}{2}$, were derived by Holmer in \cite{Holmer}.
\begin{lemma} The following holds:
	\begin{itemize}
		\item [(a)] For $s>-\frac{3}{4}$, there exists $b=b(s)<\frac12$ such that for all $\alpha>\frac{1}{2}$
		we have
		\begin{equation*}
		\big\|\partial_x (v_1v_2)\big\|_{X^{s,-b}}\lesssim \|v_1\|_{X^{s,b}\cap V^{\alpha}}\|v_2\|_{X^{s,b}\cap V^{\alpha}}.
		\end{equation*}
		\item[(b)] Considering  $s\in\left(-\frac{3}{4},3\right)$, there exists $b=b(s)<\frac12$ such that for all $\alpha>\frac{1}{2}$
		\begin{equation*}
		\big\|\partial_x (v_1v_2)\big\|_{X^{s,-b}}\lesssim\|v_1\|_{X^{s,b}\cap V^{\alpha}}\|v_2\|_{X^{s,b}\cap V^{\alpha}},
		\end{equation*}
is verified.		
	\end{itemize}
\end{lemma}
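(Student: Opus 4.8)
The plan is to prove both estimates by the duality/Fourier-restriction method of Kenig--Ponce--Vega, adapted to the modified spaces $X^{s,b}\cap V^{\alpha}$ with $b<\frac12$. Writing $\sigma=\tau-\xi^{3}$ and $\sigma_j=\tau_j-\xi_j^{3}$, duality ($X^{s,-b}$ paired against $X^{-s,b}$) reduces (a) to the trilinear bound
\begin{equation*}
\left|\int_{\ast}\frac{|\xi|\,\langle\xi\rangle^{s}}{\langle\xi_1\rangle^{s}\langle\xi_2\rangle^{s}}\,\frac{f_1(\xi_1,\tau_1)\,f_2(\xi_2,\tau_2)\,h(\xi,\tau)}{\langle\sigma\rangle^{b}\langle\sigma_1\rangle^{b}\langle\sigma_2\rangle^{b}}\right|\lesssim \|f_1\|_{L^2}\|f_2\|_{L^2}\|h\|_{L^2},
\end{equation*}
where $\int_{\ast}$ is integration over $\xi=\xi_1+\xi_2$, $\tau=\tau_1+\tau_2$, and $f_j=\langle\xi\rangle^{s}\langle\sigma\rangle^{b}|\hat v_j|$, $h=\langle\xi\rangle^{-s}\langle\sigma\rangle^{b}|\hat w|$ with $w$ in the unit ball of $X^{-s,b}$. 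The $V^{\alpha}$ part of the norm is held in reserve for the low-frequency regime treated below.

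The engine is the KdV resonance identity $\sigma-\sigma_1-\sigma_2=-3\xi\xi_1\xi_2$, which forces $\max(\langle\sigma\rangle,\langle\sigma_1\rangle,\langle\sigma_2\rangle)\gtrsim 1+|\xi\xi_1\xi_2|$. I would split the domain according to which of the three modulations is largest and perform a dyadic decomposition of the spatial frequencies. In each case one uses the dominant modulation factor to extract a power of $|\xi\xi_1\xi_2|$, applies Cauchy--Schwarz in the two free convolution variables, and reduces to the elementary calculus lemma
\begin{equation*}
\int_{\R}\frac{d\tau}{\langle\tau-a\rangle^{2b}\langle\tau-c\rangle^{2b}}\lesssim \langle a-c\rangle^{1-4b},\qquad b>\tfrac14,
\end{equation*}
together with its one-dimensional spatial analogue. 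Since $b<\frac12$ may be chosen as close to $\frac12$ as needed, the exponent $4b-1$ lies in $(0,1)$, and a bookkeeping of the resulting powers of the dyadic frequencies shows that the geometric series over dyadic scales converges precisely when $s>-\frac34$; this is the Kenig--Ponce--Vega threshold and recovers (a) in the bulk (high-frequency, non-resonant) region.

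The main obstacle is the low-frequency region where $|\xi|,|\xi_1|,|\xi_2|\lesssim 1$: there $\xi\xi_1\xi_2=O(1)$, so the resonance identity yields no gain, and since $b<\frac12$ the modulation decay furnished by $X^{s,b}$ is too weak to carry out the $\tau$-integrations. This is exactly the defect that $V^{\alpha}$ repairs. On $\{|\xi|\le 1\}$ one has $\langle\tau\rangle\sim\langle\sigma\rangle$, so the weight $\langle\tau\rangle^{\alpha}$ in $\|\cdot\|_{V^{\alpha}}$ upgrades the available modulation exponent from $b$ to $\alpha>\frac12$; I would therefore, on the low-frequency piece, replace the corresponding $X^{s,b}$ factors by $V^{\alpha}$ factors, restoring precisely the $b=\frac12$ integrability. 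The surviving $\tau$-integral then converges via $\int_{\R}\langle\tau\rangle^{-2\alpha}\,d\tau<\infty$ for $\alpha>\frac12$, the spatial weights being $O(1)$ and the output factor $|\xi|$ only helping; the low-frequency contribution is thus bounded by $\|v_1\|_{V^{\alpha}}\|v_2\|_{X^{s,b}}+\|v_1\|_{X^{s,b}}\|v_2\|_{V^{\alpha}}$. Combining with the bulk estimate proves (a).

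For part (b) the same decomposition applies verbatim; only the dyadic summation changes. Carrying the $s$-dependent spatial weight $\langle\xi\rangle^{s}\langle\xi_1\rangle^{-s}\langle\xi_2\rangle^{-s}|\xi|$ through the worst high-high-to-low interaction and summing the resulting geometric series, one checks that all summation exponents remain negative exactly when $s<3$, which is the source of the upper endpoint; for $s$ in the full range $\left(-\frac34,3\right)$ every case then closes with an appropriate $b=b(s)<\frac12$. The only point requiring care beyond (a) is to verify that raising $s$ does not destroy convergence in the high-high interaction, which is the interaction that dictates the threshold $s=3$.
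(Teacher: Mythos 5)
First, note that the paper does not prove this lemma at all: it is quoted verbatim from Holmer \cite{Holmer} (Lemma 5.10 there), so the only fair comparison is with Holmer's argument, which your sketch follows in spirit — duality between $X^{s,-b}$ and $X^{-s,b}$, the resonance identity $\sigma-\sigma_1-\sigma_2=-3\xi\xi_1\xi_2$, a case analysis on the dominant modulation with the calculus lemma $\int \langle\tau-a\rangle^{-2b}\langle\tau-c\rangle^{-2b}\,d\tau\lesssim\langle a-c\rangle^{1-4b}$, and the low-frequency space ($V^{\alpha}$, Holmer's $D_\alpha$) repairing the loss caused by $b<\frac12$. That skeleton is right. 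However, there are two genuine defects. First, you mislocate where $b<\frac12$ actually fails. The region $|\xi|,|\xi_1|,|\xi_2|\lesssim 1$ that you single out is harmless (all weights are $O(1)$ on a bounded frequency set); the dangerous region is the \emph{mixed} one where exactly one input frequency is small and the other two are large, say $|\xi_1|\le 1$, $|\xi|\sim|\xi_2|\sim N\gg 1$. There $|\xi\xi_1\xi_2|\sim N^2|\xi_1|$ degenerates when $|\xi_1|\lesssim N^{-2}$, so your claim that "elsewhere the resonance identity yields the gain" is false, and the loss $\langle\cdot\rangle^{1-4b}$ from the calculus lemma cannot be absorbed; it is precisely in this region that the $\langle\tau_1\rangle^{\alpha}$ weight (legitimate since $\langle\tau_1\rangle\sim\langle\sigma_1\rangle$ on $|\xi_1|\le 1$) must replace the modulation weight. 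Your repair mechanism would extend to this region, but as written your dichotomy "bulk non-resonant versus all-small frequencies" does not cover it, and covering it is the whole point of introducing $V^{\alpha}$.

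Second, your treatment of part (b) is confabulated. As stated in this paper, (b) displays the identical $X^{s,-b}$ estimate as (a) on a smaller range, so it is a trivial corollary of (a); your claim that the dyadic summation for the $X$-norm estimate "closes exactly when $s<3$" directly contradicts part (a), which you have just argued holds for every $s>-\frac34$ with no upper bound (and indeed the $X$-estimate does hold for all large $s$). The restriction $s<3$ in the source is attached to a \emph{different} norm: Holmer's Lemma 5.10(b) bounds $\|\partial_x(v_1v_2)\|_{Y^{s,-b}}$, where $Y^{s,b}$ carries the temporal weight $\langle\tau\rangle^{s/3}$ (the paper's transcription evidently garbled $Y$ into $X$), and it is the comparison of $\langle\tau\rangle^{s/3}$ with $\langle\xi\rangle^{s}$ near the dispersive curve that produces the endpoint $s=3$ — not any high-high interaction in the $X$-estimate. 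Inventing a mechanism for a constraint that does not exist in the estimate you are proving is a substantive error, not a bookkeeping slip; the honest resolutions are either to observe that (b) as printed follows from (a), or to prove the $Y^{s,-b}$ version, which requires handling the temporal weight and is where the real work for $s<3$ lies.
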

\subsection{Overview of well-posedness results}  In our analysis, we will consider the KdV equation posed in the unbounded domains, that is, the positive real line and the negative real line. 

Let us start considering the case of the positive real line. We firstly quote the work of Bona \textit{et al.} \cite{Bonahalf1,Bonahalf2} that considers the system \eqref{h1} in $\R^+$.  Notably, the conventional approach of eliminating the term $\partial_x u$ from the equation by transitioning to traveling coordinates comes with a substantial trade-off in the quarter-plane problem. Introducing a change of variables $v(x, t) = u(x + t, t)$ effectively eliminates the problematic term in the evolution equation. However, this transformation alters the landscape of the boundary condition, now expressed as $v(-t, t) = f(t)$ for $t \geq 0$. Consequently, the boundary condition is enforced at a dynamically shifting spatial point, framing the problem within the unconventional domain $\{(x, t): t \geq 0, x + t\}.$ 

Essentially, in \cite{Bonahalf1} the authors establish a Kato smoothing effect in the following form: For $s>\frac{3}{4}$, if $\phi \in H^s\left(\R^{+}\right)$and $f \in H_{l o c}^{\frac{s+1}{3}}\left(\R^{+}\right)$ satisfy certain compatibility conditions at $(x, t)=(0,0)$, then the IBVP \eqref{h1} admits a unique solution
$$
u \in C\left(0, T ; H^s\left(\R^{+}_x\right)\right) \cap L^2\left(0, T ; H_{l o c}^{s+1}\left(\R^{+}_x\right)\right),
$$
which satisfies the following additional properties
$$
\left(\sup _{0<x<+\infty} \int_0^T\left|\partial_x^{s+1} u(x, t)\right|^2 d t\right)^{\frac{1}{2}} \leq C\left(\|\phi\|_{H^s\left(\R^{+}_x\right)}+\|f\|_{H^{\frac{s+1}{3}(0, T)}}\right).
$$
On the other hand, in \cite{bonahalf3}, the authors proved a boundary smoothing property for the corresponding linear problem, where the $u \partial_{x} u$ term is dropped and $\phi(x)=0$, which states
$$
\|u\|_{L^2\left([0, T] ; H^{s+\frac{3}{2}}\left(\mathbb{R}^{+}_X\right)\right)} \leq \mathrm{c}\|f\|_{\mathrm{H}^{\frac{s+1}{3}}\left(\mathbb{R}^{+}_t\right)},
$$
where $c=c(s, T)$. As an application of this property and variants of it, the authors obtain local well-posedness of mild solutions of the nonlinear problem for $\mathrm{s}>-\frac{3}{4}$, where mild solutions are defined as ones that can be appropriately approximated by smoother solutions. Next, we present the results that can be found in \cite{Bonahalf1} and \cite{bonahalf3}, respectively. 

\begin{theorem}
The initial-boundary-value problem \eqref{h1} is locally well-posed for initial data $\phi\in H^s\left(\R^{+}_x\right)$ and boundary data $f\in H_{\text {loc }}^{(s+1) / 3}\left(\R^{+}_t\right)$ satisfying certain compatibility conditions for $s>3 / 4$, whereas global well-posedness holds for $\phi \in H^s\left(\R^{+}_x\right), f \in H^{\frac{7+3 s}{12}}\left(\R^{+}_t\right)$ when $1 \leq s \leq 3$ and for $\phi \in H^s\left(\R^{+}_x\right)$, $f \in H_{\text {loc }}^{(s+1) / 3}\left(\R^{+}_t\right)$when $s \geq 3$. Furthermore, the corresponding solution map is an analytic correspondence between the space of initial and boundary data and the solution space.
\end{theorem}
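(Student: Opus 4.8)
The plan is to follow the Duhamel-with-boundary-forcing strategy of Colliander--Kenig and Holmer, recast as a fixed-point problem in the modified Bourgain space $X^{s,b}\cap V^{\alpha}$. First I would represent a solution of \eqref{h1} as the superposition
$$
u(x,t)=\theta(t)e^{-t(\partial_x+\partial_x^3)}\phi(x)-\tfrac12\theta(t)\mathscr{D}\partial_x u^2(x,t)+\theta(t)\mathscr{L}_+^{\lambda}\!\left(e^{-\pi i\lambda}h\right)(x,t),
$$
where the boundary forcing term $\mathscr{L}_+^{\lambda}$, built from the Airy function and the Riemann--Liouville fractional integral as in \eqref{forcingmenos}, is tuned so that the trace at $x=0$ reproduces $f$ after subtracting the traces of the linear and Duhamel contributions (this is exactly the mechanism behind \eqref{eqnew18}). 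The compatibility conditions enter here: for $s>3/4$ the time trace $f(0)$ and the space trace $\phi(0)$ are both defined, and matching them is what keeps the forcing correction bounded.

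Second, I would collect the linear estimates. The group $e^{-t(\partial_x+\partial_x^3)}$ and the Duhamel operator $\mathscr{D}$ are controlled in $X^{s,b}\cap V^{\alpha}$ by the standard Bourgain and Kato smoothing bounds (the $L^\infty_x H^{(s+1)/3}_t$ and $L^\infty_x H^{s/3}_t$ estimates recalled in the Remark on Kenig--Ponce--Vega smoothing); the delicate ingredient is the mapping property of $\mathscr{L}_+^{\lambda}$, namely that it sends $f\in H^{(s+1)/3}(\R^+_t)$ into the solution space with the matching space, time, and derivative traces demanded by Definition \ref{def_1}. I would establish this via the explicit kernel analysis of the forcing operator recorded in Appendix \ref{app1}, choosing $\lambda$ in the admissible range dictated by the well-posedness thresholds.

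Third, with the linear theory in place, the nonlinearity is handled by the bilinear estimate quoted above, $\|\partial_x(v_1v_2)\|_{X^{s,-b}}\lesssim \|v_1\|_{X^{s,b}\cap V^{\alpha}}\|v_2\|_{X^{s,b}\cap V^{\alpha}}$, valid for $s>-3/4$, so the map defined by the right-hand side of the representation is a contraction on a small ball of $X^{s,b}\cap V^{\alpha}$ for $T$ small; this yields local well-posedness together with uniqueness. The analyticity of the data-to-solution map is then automatic, since the fixed-point map depends polynomially on $u$ and affinely on $(\phi,f)$, so the analytic implicit function theorem gives real-analytic dependence.

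Finally, for global well-posedness in $1\le s\le 3$ I would propagate the local solution by a priori bounds. At the $L^2$ level these come from the energy identity \eqref{h1rr}, which expresses $\|u(\cdot,t)\|_{L^2}^2$ in terms of the data and boundary traces; combined with the sharp boundary regularity $f\in H^{(7+3s)/12}$ this controls the growth and lets the local theory be iterated on any $[0,T]$, the range $s\ge 3$ being handled by the higher conservation laws. \emph{The main obstacle} I expect is the forcing-operator estimate at low regularity in the presence of the retained drift $\partial_x u$: unlike the pure Airy case, the transport term forces the boundary condition onto a moving spatial point (the wedge domain noted in the introduction), so the kernel of $\mathscr{L}_+^{\lambda}$ and its trace behavior must be analyzed with the shifted symbol $k-k^3$ rather than $-k^3$, which is precisely where the parameter $\lambda$ and the threshold $s>3/4$ become essential.
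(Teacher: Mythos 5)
This theorem is not proved in the paper at all: it is quoted verbatim as the main result of Bona, Sun, and Zhang \cite{Bonahalf1}, whose method is the Laplace-transform boundary integral representation (the operator $U_b$ appearing in Definition \ref{def-b}), Kato smoothing estimates, and a contraction in spaces of the type $C\left([0,T];H^s(\R^+_x)\right)\cap L^2\left(0,T;H^{s+1}_{loc}(\R^+_x)\right)$. Your proposal instead runs the Colliander--Kenig/Holmer boundary forcing scheme in $X^{s,b}\cap V^{\alpha}$, and this is where the genuine gap lies: that framework is structurally capped below $s=\frac{3}{2}$ and therefore cannot produce the stated result, which requires local well-posedness for \emph{all} $s>\frac34$ and global well-posedness up to and beyond $s=3$. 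Concretely, the Bourgain-space estimate for the forcing operator (Lemma \ref{estimateforcing}(d)) demands $s-1\le\lambda<s+\frac12$ together with $\lambda<\frac12$, which forces $s<\frac32$; this is why the paper itself only invokes Holmer's theory in the range $-\frac34<s<\frac32$, $s\neq\frac12$ (Theorems \ref{holmer} and \ref{localwellposedness}). Reaching $s\ge\frac32$, with the attendant higher-order compatibility conditions, is precisely what the \cite{Bonahalf1} machinery is built for and what your fixed-point setup cannot deliver without replacing the linear theory wholesale.

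The global part of your sketch is also not a proof. The identity \eqref{h1rr} involves the boundary traces $u_{xx}(0,t)$ and $u_x^2(0,t)$; with inhomogeneous data $f\neq 0$ the term $2f(t)u_{xx}(0,t)$ has no sign and $u_{xx}(0,\cdot)$ is not controlled by the data, so the energy identity does not close. The hypothesis $f\in H^{\frac{7+3s}{12}}(\R^+_t)$ is strictly stronger than the scaling-natural $H^{\frac{s+1}{3}}$ exactly when $s<3$ (since $\frac{7+3s}{12}-\frac{s+1}{3}=\frac{3-s}{12}$), and in \cite{Bonahalf1} this surplus regularity is consumed by a delicate chain of global a priori estimates, not by the formal identity; likewise, for $s\ge 3$ the classical ``higher conservation laws'' are destroyed by the boundary terms and must be replaced by the quarter-plane arguments of \cite{Bonahalf1,bonahalf3}. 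Two smaller points: your analyticity claim via the polynomial fixed-point map is fine in principle, but your closing ``main obstacle'' about a moving boundary is a red herring --- the IBVP \eqref{h1} is posed on the fixed domain $\R^+_x$ and the drift is absorbed directly into the group $e^{-t(\partial_x+\partial_x^3)}$ with symbol $\xi+\xi^3$, exactly as in the paper's Appendix \ref{app1}; the wedge domain only arises if one insists on removing the drift by a Galilean change of variables, which nobody here does.
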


\begin{theorem}
Let $s \geq-3/2$ and $T>0$ be given. There exists a constant $C$ such that for any $f \in H_0^{(s+1)/3}\left(\R^{+}_t\right)$, the corresponding solution $u$ of the linearized system associated to \eqref{h1} belongs to the space $L^2\left(0, T ; H_0^{s+3/2}\left(\R^{+}_x\right)\right)$ and satisfies
$$
\|u\|_{L^2\left(0, T ; H^{s+\frac{3}{2}}\left(\R^{+}_x\right)\right)} \leq C\|f\|_{H^{\frac{s+1}{3}}\left(\R^{+}_t\right)},
$$
for a constant $C$ depending only on $s$ and $T$.
\end{theorem}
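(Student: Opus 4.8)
The cleanest route is to exploit the explicit boundary representation of the solution and to reduce the space--time bound to Plancherel's theorem in time. Since $\phi$ is absent (we treat the linearized system with zero initial data), the solution is generated entirely by $f$, and I would use the boundary operator of Definition \ref{def-b}, writing $u=2\,\mathrm{Re}\,[U_b(t)f]$, where
\[
[U_b(t)f](x)=\frac{1}{2\pi}\int_1^{\infty}e^{i(\mu^3-\mu)t}\,e^{-\left(\frac{\sqrt{3\mu^2-4}+i\mu}{2}\right)x}\,(3\mu^2-1)\,\widehat f(\mu^3-\mu)\,d\mu .
\]
Here $\widehat f(\mu^3-\mu)=\int_0^{\infty}e^{-i(\mu^3-\mu)\xi}f(\xi)\,d\xi$ is the time Fourier transform of $f$, which makes sense because $\supp f\subset[0,\infty)$ --- the first place the hypothesis $f\in H_0^{(s+1)/3}(\R^+_t)$ enters. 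I would first establish the estimate for smooth $f$ compactly supported in $(0,\infty)$, for which the representation is exact, and then pass to general $f\in H_0^{(s+1)/3}(\R^+_t)$ by density, the Sobolev lemmas of Section \ref{sec2} guaranteeing that such data are dense and that extension by zero costs nothing.

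The heart is then a direct computation. Performing the change of variables $\tau=\mu^3-\mu$ (so that $d\tau=(3\mu^2-1)\,d\mu$ and $\langle\tau\rangle\sim\mu^3$ for large $\mu$) turns $U_b(t)f$ into an inverse time Fourier transform with symbol $e^{-\frac{\sqrt{3\mu^2-4}}{2}x}e^{-i\frac{\mu}{2}x}\widehat f(\tau)$, the factor $(3\mu^2-1)$ being absorbed by the Jacobian. Applying Plancherel in $t$ and then integrating in $x$, the spatial exponential decay produces $\int_0^{\infty}e^{-\sqrt{3\mu^2-4}\,x}\,dx=(3\mu^2-4)^{-1/2}\sim\mu^{-1}$, while $\langle D_x\rangle^{s+3/2}$ contributes $\sim\mu^{s+3/2}$, the spatial frequency of the mode being comparable to $\mu$. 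Collecting these and returning to $\mu$ gives
\[
\|u\|_{L^2_tH^{s+3/2}_x}^2\sim\int_1^{\infty}\mu^{2(s+3/2)}\,\frac{(3\mu^2-1)}{\sqrt{3\mu^2-4}}\,\bigl|\widehat f(\mu^3-\mu)\bigr|^2\,d\mu\sim\int_1^\infty\mu^{2s+4}\bigl|\widehat f(\mu^3-\mu)\bigr|^2\,d\mu .
\]
Since, in the same variable, $\|f\|_{H^{(s+1)/3}_t}^2\sim\int_1^\infty\mu^{2(s+1)}(3\mu^2-1)|\widehat f(\mu^3-\mu)|^2\,d\mu\sim\int_1^\infty\mu^{2s+4}|\widehat f(\mu^3-\mu)|^2\,d\mu$, the two weights match exactly; this is precisely the gain of $3/2$ spatial derivatives, and conceptually it is the same mechanism as the Kato smoothing estimates of Kenig, Ponce and Vega recalled in the Remark, the $x$-integration of a decaying mode being what buys the extra regularity. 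Restricting the time interval to $(0,T)$ only decreases the left-hand side, so the asserted bound follows with $C=C(s,T)$.

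The main obstacle is the low-frequency region: $\mu$ near $1$ (equivalently $\tau=\mu^3-\mu\to 0$) and the window $1<\mu<2/\sqrt3$ where $3\mu^2-4\le 0$, so the spatial mode no longer decays but oscillates and the clean exponential-decay bookkeeping above breaks down. On this compact set one cannot gain derivatives by the same argument, but the contribution is controlled since the time frequencies are bounded; the behaviour as $\tau\to0$ is exactly where the support condition $f\in H_0^{(s+1)/3}$ and the associated low-frequency weights are needed, and it is also what forces the homogeneous space $H_0^{s+3/2}(\R^+_x)$ on the output, the causal support of $u$ following from the analyticity of $\widehat f$ afforded by $\supp f\subset[0,\infty)$. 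Finally, to reach the full range $s\ge-\tfrac32$, where the representation of Definition \ref{def-b} is no longer directly available, I would run the identical computation on the forcing-operator representation $u=\mathscr{L}_{+}^{\lambda}(\mathcal{I}_\beta f)$ from Appendix \ref{app1}, whose Airy kernel and fractional-integration factor carry the same scaling, or else argue by duality from the adjoint backward problem.
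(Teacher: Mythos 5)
The paper itself contains no proof of this theorem: it is quoted verbatim from Bona--Sun--Zhang \cite{bonahalf3}, and the argument there is essentially the one you sketch --- the explicit boundary-integral representation $u=2\,\mathrm{Re}\,[U_b(t)f]$, the change of variables $\tau=\mu^3-\mu$, and Plancherel in $t$ followed by integration in $x$, where the decay rate $\sqrt{3\mu^2-4}$ produces exactly the gain of $3/2$ spatial derivatives against the weight $\langle\tau\rangle^{(s+1)/3}\sim\mu^{s+1}$ --- so your proposal is correct in substance and follows the same route (and your worry about $s\le 3/4$ is unnecessary, since for smooth compactly supported $f$ the formula solves the linear problem exactly and density in $H_0^{(s+1)/3}(\R^+_t)$ extends the bound to all $s\ge -\tfrac32$). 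The one place where your write-up is weaker than the actual proof is the window $1<\mu<2/\sqrt3$: ``bounded time frequencies'' alone does not control a non-decaying mode in $L^2(\R^+_x)$, and one must instead run Plancherel in $x$ with the spatial frequency $\xi(\mu)=-\bigl(\mu+\sqrt{4-3\mu^2}\bigr)/2$, checking that the Jacobian $|d\mu/d\xi|=2\sigma/|3\mu-\sigma|$, $\sigma=\sqrt{4-3\mu^2}$, stays bounded there (it does, since $\sigma\le 1<3\mu$ on that interval); also note that membership in $H_0^{s+3/2}(\R^+_x)$ is a statement about spatial support, not the time-causality of $\widehat f$ you invoke.
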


To obtain the boundary controllability for the KdV posed in a half-line (positive or negative), it is necessary to consider a lower regularity in the initial data than  $s>\frac34$ as in the previous results. In this sense, Holmer \cite{Holmer} proves the existence of a solution of the KdV equation \eqref{h1} and \eqref{h2}, with $\beta=0$, posed either on a left half-line and right half-line. The main accomplishment of his work is to show that initial and boundary data may be given in Sobolev spaces of negative index. Indeed, the author demonstrated the existence of solutions for initial data in the Sobolev space $H^{\mathrm{s}}$ as long as $s$ is greater than $-\frac{3}{4}$, and similar restrictions apply to the boundary data. As is shown in the article, the right half-line problem requires one Dirichlet condition, while the left half-line problem requires an additional Neumann condition. On the finite interval, the problem is solved with two Dirichlet conditions and one Neumann condition on the right boundary. Now, to establish the well-posedness of systems \eqref{h1} and \eqref{h2}, for $-\frac34 < s < \frac32$ with $s\neq \frac12$. 

Now on, we will consider the case when $\beta=0$ and the presence of source function $h \in L^1(0,T,H^{s}(\Omega))$, where $\Omega=\R^+$ or $\Omega=\R^{-}$, namely
\begin{equation}\label{h1_1}
\begin{cases}\partial_t u+\partial_x^3 u=h, & \text { for }(x, t) \in(0,+\infty) \times(0, T), \\ u(0, t)=f(t), & \text { for } t \in(0, T), \\ u(x, 0)=\phi(x), & \text { for } x \in(0,+\infty),
\end{cases}
\end{equation}
and
\begin{equation}\label{h2_1}
\begin{cases}\partial_t u+\partial_x^3 u=h, & \text { for }(x, t) \in(-\infty, 0) \times(0, T), \\ u(0, t)=g_1(t), \quad \partial_x u(0, t)=g_2(t), & \text { for } t \in(0, T),\\ u(x, 0)=\phi(x),& \text { for } x \in(-\infty, 0).\end{cases}
\end{equation}
Thus, the well-posedness result for the linear systems, as presented in \cite{Holmer}, can be stated as follows.
\begin{theorem} \label{holmer} Let $-\frac{3}{4}<s<\frac{3}{2}, s \neq \frac{1}{2}$, and consider $h=0$ in the system \eqref{h1_1} and \eqref{h2_1}.
\begin{itemize}
\item[i.] Given $(\phi, f)$ satisfying \eqref{h3}, exist $T>0$, depending only on the norms of $\phi, f$ in \eqref{h3}, and $u(x, t)$ that is a mild and distributional solution to \eqref{h1_1}-\eqref{h3} on $[0, T]$.
\item[ii.] Given $\left(\phi, g_1, g_2\right)$ satisfying \eqref{h4}, exist $T>0$, depending only on the norms of $\phi, g_1, g_2$ in \eqref{h4}, and $u(x, t)$ a mild and distributional solution to  \eqref{h2_1}-\eqref{h4} on $[0, T]$.
\end{itemize}
\end{theorem}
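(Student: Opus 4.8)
The plan is to follow the Duhamel boundary forcing operator method of Colliander--Kenig \cite{CK}, as sharpened by Holmer \cite{Holmer}. The two systems are treated in parallel; the essential difference is that the right half-line problem \eqref{h1_1} requires inverting a single forcing relation, whereas the left half-line problem \eqref{h2_1} requires decoupling two boundary conditions. Since the forcing $h$ is taken to be zero, no fixed-point argument is needed: the construction is entirely linear, and the modified Bourgain space $X_{s,b}\cap D_\alpha$ serves only to host the sharp space and time trace estimates.

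First I would reduce to the case of vanishing initial data. Given $\phi\in H^s(\R^+_x)$ (resp.\ $H^s(\R^-_x)$), extend it to $\widetilde{\phi}\in H^s(\R)$ with $\|\widetilde{\phi}\|_{H^s(\R)}\lesssim\|\phi\|_{H^s}$ and set $v(x,t)=\theta(t)\,e^{-t\partial_x^3}\widetilde{\phi}(x)$, the truncated free Airy evolution on the whole line. By construction $v$ solves $\partial_t v+\partial_x^3 v=0$ on $\R\times(0,T)$ and realizes the initial value $\phi$ on the relevant half-line. The Kato smoothing estimates quoted in the Remark above (from \cite{KePoVe}) give well-defined lateral traces $v(0,\cdot)\in H^{(s+1)/3}_t$ and $\partial_x v(0,\cdot)\in H^{s/3}_t$. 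Subtracting, $w:=u-v$ must solve the same equation with zero initial data and boundary data $f-v(0,\cdot)$ for \eqref{h1_1}, respectively $g_1-v(0,\cdot)$ and $g_2-\partial_x v(0,\cdot)$ for \eqref{h2_1}; the compatibility conditions in \eqref{h3}, \eqref{h4} ensure these differences lie in $H^{(s+1)/3}_0$ and $H^{s/3}_0$ when $s>\tfrac12$.

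Next I would construct $w$ through the boundary forcing operators. For the right half-line I seek $w(x,t)=\theta(t)\,\mathscr{L}_{+}^\lambda\!\bigl(e^{-\pi i\lambda}h\bigr)(x,t)$ with $h$ to be determined. Because $\mathscr{L}_{+}^\lambda$ solves $\partial_t+\partial_x^3$ in the distributional sense away from $x=0$, carries zero initial data, and has a lateral trace given by a fractional-integration factor applied to its argument (the operators $\mathcal{I}$ appearing in \eqref{eqnew19}), matching the trace reduces to inverting that explicit fractional-integration relation on $H^{(s+1)/3}_0$, which fixes $h$ uniquely for an admissible $\lambda$. For the left half-line I use a superposition $w=\mathscr{L}_{-}^{\lambda_1}h_1+\mathscr{L}_{-}^{\lambda_2}h_2$ with two distinct parameters; imposing the Dirichlet trace $w(0,\cdot)$ and the Neumann trace $\partial_x w(0,\cdot)$ yields a $2\times2$ linear system for $(h_1,h_2)$ whose coefficient matrix $M$ must be inverted. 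This is the decoupling step producing the formulas of Theorem \ref{main-b}. The admissible windows for $\lambda,\lambda_1,\lambda_2$ are precisely those making the fractional integrals and their traces bounded, and the exclusion $s\neq\tfrac12$ reflects the endpoint failure of the trace characterization of $H^{(s+1)/3}_0$ at the half-integer exponent $\tfrac12$.

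The main obstacle is establishing the sharp mapping properties of $\mathscr{L}_{\pm}^\lambda$ at low regularity. Concretely, for $-\tfrac34<s<\tfrac32$ one must produce $b<\tfrac12$ and $\alpha>\tfrac12$ such that $\mathscr{L}_{\pm}^\lambda$ carries boundary data in $H^{(s+1)/3}_t$ (resp.\ $H^{s/3}_t$) into $C([0,T];H^s_x)\cap X_{s,b}\cap D_\alpha$ with continuous space and time traces; these are the estimates collected in Appendix \ref{app1}, and their proof requires careful analysis of the oscillatory Airy kernel and analytic continuation in $\lambda$. For the left half-line the second obstacle is the invertibility of $M$, which one verifies by checking that its determinant does not vanish for the chosen $(\lambda_1,\lambda_2)$, thereby allowing the two boundary conditions to be prescribed independently. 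Once these estimates and the invertibility are in place, $u=v+w$ is the desired mild and distributional solution, and the existence time $T$ depends only on the data norms through the operator bounds, completing both parts.
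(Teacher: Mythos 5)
Your proposal is correct and follows essentially the same route as the paper, which establishes Theorem \ref{holmer} by appealing to Holmer's construction: the solution is the superposition of the truncated free group with the Duhamel boundary forcing operator $\mathscr{L}_{+}^{\lambda}$ on the right half-line, and with $\mathscr{L}_{-}^{\lambda_1}h_1+\mathscr{L}_{-}^{\lambda_2}h_2$ on the left half-line, where the traces are matched by inverting the $2\times 2$ system with matrix $M$ of \eqref{matrixA}, exactly as in the formulas \eqref{formulasolutionright}--\eqref{formulasolutionleft} and the estimates of Lemmas \ref{estimatesgroup}, \ref{estimateinhom} and \ref{estimateforcing} in Appendix \ref{app1}. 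The only cosmetic discrepancy is your description of the lateral trace of $\mathscr{L}_{\pm}^{\lambda}$ as a fractional-integration relation to be inverted; in Holmer's normalization the fractional integrals inside the definition compensate so that the trace at $x=0$ is a constant multiple of the argument, which is why the paper's formulas only carry the factor $e^{-\pi i\lambda}$ and the matrix $M$, but this does not affect the validity of your argument.
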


Additionally, the results in \cite{Holmer} also ensure the well-posedness theory for the systems \eqref{h1} and \eqref{h2}, respectively. 
\begin{theorem}\label{localwellposedness} Let $-\frac{3}{4}<s<\frac{3}{2}, s \neq \frac{1}{2}$, it follows that
\begin{itemize}
\item[i.] Given $(\phi, f)$ satisfying \eqref{h3}, exist $T>0$, depending only on the norms of $\phi, f$ in \eqref{h3}, and $u(x, t)$ solution to \eqref{h1}-\eqref{h3} on $[0, T]$.
\item[ii.] Given $\left(\phi, g_1, g_2\right)$ satisfying \eqref{h4}, exist $T>0$, depending only on the norms of $\phi, g_1, g_2$ in \eqref{h4}, and  $u(x, t)$ solution to  \eqref{h2}-\eqref{h4} on $[0, T]$.
\end{itemize}
In both cases, the data-to-solution map is analytic as a map from the spaces in \eqref{h3} and \eqref{h4}, to the spaces giving in the Definition \ref{def_1}, which means that following solutions maps, 
\begin{equation*}
\begin{array}{c c c c}
\Gamma_r: & H^s(\R^+_x) \times H^{\frac{s+1}{3}}(\R^+_t) & \longrightarrow & X=X_{s, b} \cap D_\alpha \\
& (\phi,f) & \longrightarrow & \Gamma_r(\phi,f)=u
\end{array}
\end{equation*}
and
\begin{equation*}
\begin{array}{c c c c}
\Gamma_l: & H^s(\R^-_x) \times H^{\frac{s+1}{3}}(\R^+_t) \times H^{\frac{s}{3}}(\R^+_t)& \longrightarrow & X=X_{s, b} \cap D_\alpha \\
& (\phi,g_1,g_2) & \longrightarrow & \Gamma_l(\phi,g_1,g_2)=u,
\end{array}
\end{equation*}
are analytics, where $\Gamma_r$ and $\Gamma_l$ are the map solution of the systems \eqref{h1} and \eqref{h2}, respectively. 
\end{theorem}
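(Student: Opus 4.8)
The plan is to bootstrap the \emph{linear} well-posedness already recorded in Theorem \ref{holmer} up to the full nonlinear systems \eqref{h1} and \eqref{h2} by a contraction-mapping (Picard) argument in the modified Bourgain space $X=X_{s,b}\cap D_\alpha$, with $b<\tfrac12$ and $\alpha>\tfrac12$. The mechanism is to treat the drift $\partial_x u$ together with the nonlinearity $u\partial_x u=\tfrac12\partial_x(u^2)$ as a source term $h$ fed into the inhomogeneous linear solution operators of Theorem \ref{holmer}. Concretely, I would recast the IBVP as a fixed-point equation for a map $\Lambda$ whose output is the right-hand side of the boundary-forcing representation of type \eqref{eqnew18} — augmented by the free-evolution term $e^{-t(\partial_x+\partial_x^3)}\phi$ when $\phi\neq 0$ — with $u^2$ substituted on the right. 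A fixed point of $\Lambda$ is exactly a mild and distributional solution in the sense of Definition \ref{def_1}. For part (i) (right half-line) this involves the single forcing operator $\mathscr{L}_+^\lambda$; for part (ii) (left half-line) it involves the coupled pair $\mathscr{L}_-^{\lambda_1},\mathscr{L}_-^{\lambda_2}$ tied together through the invertible matrix $M$ that encodes the two boundary traces $g_1,g_2$.

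Next I would assemble the requisite estimates. On the linear side, Theorem \ref{holmer} together with the forcing- and Duhamel-operator bounds recorded in Appendix \ref{app1} guarantees that the free evolution, the boundary-forcing terms $\mathscr{L}_\pm^\lambda$, and the Duhamel operator $\mathscr{D}$ map the data spaces \eqref{h3}, \eqref{h4} and the source term boundedly into $X$, while the time traces and derivative traces land in $H^{(s+1)/3}$ and $H^{s/3}$, so that conditions (b)--(d) of Definition \ref{def_1} are met and the boundary data are genuinely attained. On the nonlinear side, the controlling ingredient is the bilinear estimate of Holmer quoted above, namely $\|\partial_x(u^2)\|_{X^{s,-b}}\lesssim\|u\|_{X^{s,b}\cap V^\alpha}^2$ valid for $s>-\tfrac34$, which bounds the source $\tfrac12\partial_x(u^2)$ in the negative-index Bourgain space required to close the Duhamel term.

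Combining these, and exploiting the time-localization gain $T^{\theta}$ with $\theta>0$ that is available precisely because $b<\tfrac12$, one checks that $\Lambda$ maps a small ball of $X$ into itself and is a contraction there for $T$ small, with $T$ depending only on the data norms in \eqref{h3} or \eqref{h4}. Banach's fixed-point theorem then produces the unique solution $u\in X$ on $[0,T]$, proving both (i) and (ii). Finally, analyticity of the solution maps $\Gamma_r$ and $\Gamma_l$ follows from the standard observation that $\Lambda$ is jointly analytic — indeed polynomial — in its arguments, being linear in the data and quadratic in $u$; applying the analytic implicit function theorem to the fixed-point equation $u=\Lambda(u;\text{data})$ yields the analytic dependence asserted in the theorem.

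The step I expect to be the main obstacle is not the contraction mechanism itself but the control of the nonlinear boundary-feedback term $\mathscr{L}_+^\lambda\!\left(\tfrac12\theta\,\mathscr{D}\,\partial_x u^2(0,\cdot)\right)$ (and its two-component analogue built from $M$ on the left half-line): one must verify that the nonlinear Duhamel contribution, restricted to the spatial boundary $x=0$ and re-injected into the forcing operator, remains controlled in the appropriate time-trace norm uniformly as $s$ approaches $-\tfrac34$, and that the exceptional thresholds ($s=\tfrac12$, and the $s$-compatibility conditions active for $s>\tfrac12$) are handled correctly. This is where the sharp mapping properties of $\mathscr{D}$ and $\mathscr{L}_\pm^\lambda$ across the various trace spaces, together with the compatibility between the $D_\alpha$ and $V^\alpha$ components of the norm, must be exploited with care.
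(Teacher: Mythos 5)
Your proposal follows essentially the same route the paper takes: Theorem \ref{localwellposedness} is obtained exactly as in Holmer \cite{Holmer}, namely by a contraction-mapping argument in $X_{s,b}\cap D_\alpha$ with $b<\tfrac12$ built on the boundary-forcing representations \eqref{formulasolutionright}--\eqref{formulasolutionleft} (single operator $\mathscr{L}_+^\lambda$ on the right half-line, the pair $\mathscr{L}_-^{\lambda_1},\mathscr{L}_-^{\lambda_2}$ coupled through the matrix $M$ on the left), the group/Duhamel/forcing estimates of Lemmas \ref{estimatesgroup}, \ref{estimateinhom} and \ref{estimateforcing} together with the bilinear estimate $\|\partial_x(u^2)\|_{X^{s,-b}}\lesssim\|u\|_{X^{s,b}\cap V^\alpha}^2$, with analyticity of $\Gamma_r$ and $\Gamma_l$ following from the polynomial (linear-in-data, quadratic-in-$u$) structure of the fixed-point map. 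The only cosmetic deviation is that the smallness in Holmer's scheme is produced by rescaling the data rather than by a $T^{\theta}$ gain from time localization, but this does not affect the validity of your argument.
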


\subsection{Boundary formulas} In \cite{Holmer}, Holmer introduced the boundary forcing operator, which is the key point to prove the previous results. This operator gives us a chance to express the solution of the system \eqref{h1} and \eqref{h2}, as well as the boundary terms in terms of this operator (for details, see Appendix \ref{FormulaForcing}). Thus, the local solution of the systems \eqref{h1} and \eqref{h2} are  given by
\begin{equation}\label{formulasolutionright}
\begin{array}{c}
\text{Right} \\
 \text{Half-Line }\\
  \text{Problem}\\
\end{array}
\begin{cases}
u(x, t)=\theta(t) e^{-t (\partial_x+\partial_x^3)} \phi(x)-\frac{1}{2} \theta(t) \mathscr{D} \partial_{x} u^2(x, t)+\theta(t) \mathscr{L}_{+}^\lambda h(x, t,\phi), \\
\\
h(t,\phi)=e^{-\pi i \lambda}\left[f(t)-\left.\theta(t) e^{-t(\partial_x+ \partial_x^3)} \phi\right|_{x=0}+\frac{1}{2} \theta(t) \mathscr{D} \partial_x u^2(0, t)\right] .
\end{cases}
\end{equation}
and 
\begin{equation}\label{formulasolutionleft}
\begin{array}{c}
\text{ Left} \\
 \text{Half-Line }\\
  \text{Problem}
\end{array}
\begin{cases}
\begin{aligned}
u(x, t)=\theta(t) e^{-t (\partial_x+\partial_x^3)} \phi(x)&-\frac{1}{2} \theta(t) \mathscr{D} \partial_x u^2(x, t)\\&+\theta(t) \mathscr{L}_{-}^{\lambda_1} h_1(x, t,\phi)+\theta(t) \mathscr{L}_{-}^{\lambda_2} h_2(x, t,\phi),
\end{aligned}\\
\\
\left[\begin{array}{l}
h_1(t,\phi) \\
h_2(t,\phi)
\end{array}\right]=M\left[\begin{array}{c}
g_1(t)-\left.\theta(t) e^{-t (\partial_x+ \partial_x^3)} \phi\right|_{x=0}+\frac{1}{2} \theta(t) \mathscr{D} \partial_x u^2(0, t) \\
\theta(t) \mathcal{I}_{1 / 3}\left(g_2-\left.\theta \partial_x e^{-t (\partial_x+\partial_x^3)} \phi\right|_{x=0}+\frac{1}{2} \theta \partial_x \mathscr{D} \partial_x u^2(0, \cdot)\right)(t)
\end{array}\right],
\end{cases}
\end{equation}
respectively, where $M$ is a matrix given by \eqref{matrixA}.

Finally, it is worth noting that Fokas \cite{fokas0, fokas, fokas2} has proposed an alternative approach for solving initial-boundary value problems for integrable nonlinear evolution equations, known as the Unified Transform Method (UTM). Fokas \textit{et al.}, in \cite{fokas2}, studied the validity of the UTM formula for the KdV equation with data in Sobolev spaces. There, the authors studied the KdV equation without a drift term, i.e., $\beta=0$. For more details, see the Appendix \ref{app1} and \cite[Chapter 1, examples 1.1 and 1.12]{fokas}.

\section{Exact controllability: The right half-line}\label{sec3}

In this section, our primary focus is to attain the exact controllability of the system described by \eqref{h1} and establish the proof for Theorem \ref{main-a}.
\subsection{Backward system}
 Initially, consider the following homogeneous linearized system 
\begin{equation*}
\begin{cases}\partial_t u+ \partial_xu+\partial_x^3 u=0, & \text { for }(x, t) \in(0,+\infty) \times(0, T),\\ 
u(0, t)=0,& \text { for } t \in(0, T),\\
 u(x, 0)=\phi(x),& \text { for } x \in(0,+\infty).
\end{cases}
\end{equation*}
Note that this system could be rewritten as
\begin{align*}
\begin{cases}
\partial_tu = Au, \\
u(0)=u_0,
\end{cases}\quad \text{where} \quad 
\begin{cases}
Au=-\partial_xu - \partial_x^3 u, \\
D(A):=\left\lbrace u \in H^3(\R^+_x): u(0)=0\right\rbrace \subset L^2(\R^+_x).
\end{cases}
\end{align*}
Using the Semigroup theory (see, for instance, \cite[Cor. 4.4 chapter 1]{pazy}), it is not difficult to see the following result.
\begin{proposition}\label{propsemigruop}
The operator $A$ generates a $C_0$-semigroup of contraction $(S(t))_{t \geqslant 0}$ in $L^2(\R^+_x)$.
\end{proposition}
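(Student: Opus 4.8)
The plan is to invoke the Lumer--Phillips theorem exactly in the form cited, \cite[Cor.~4.4, Ch.~1]{pazy}: a densely defined, closed operator $A$ on a Hilbert space generates a $C_0$-semigroup of contractions provided that both $A$ and its adjoint $A^*$ are dissipative. The proof therefore reduces to four verifications, of which the first two are routine. Density of $D(A)$ in $L^2(\R^+_x)$ is immediate, since $C_c^\infty\big((0,+\infty)\big)\subset D(A)$ (such functions vanish near $x=0$, so the constraint $u(0)=0$ holds) and is dense in $L^2(\R^+_x)$. Closedness of $A$ follows from distributional/ODE regularity: if $u_n\to u$ and $Au_n\to g$ in $L^2(\R^+_x)$, then $-\partial_x u-\partial_x^3 u=g$ holds in $\mathcal D'$, the relation $\partial_x^3 u=-\partial_x u-g$ forces $u\in H^3(\R^+_x)$, and continuity of the trace map on $H^3$ passes the boundary condition $u(0)=0$ to the limit.

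For the dissipativity of $A$, I would integrate by parts in $\operatorname{Re}\langle Au,u\rangle=\operatorname{Re}\int_0^{\infty}\big(-\partial_x u-\partial_x^3 u\big)\bar u\,dx$. The drift term is antisymmetric: $\operatorname{Re}\int_0^\infty(-\partial_x u)\bar u\,dx=\tfrac12|u(0)|^2=0$ because $u(0)=0$. For the dispersive term, three integrations by parts together with the vanishing of all boundary contributions at $x=+\infty$ and the condition $u(0)=0$ give $\operatorname{Re}\int_0^\infty(-\partial_x^3 u)\bar u\,dx=-\tfrac12|\partial_x u(0)|^2$. Hence
\begin{equation*}
\operatorname{Re}\langle Au,u\rangle=-\tfrac12\,|\partial_x u(0)|^2\le 0 ,
\end{equation*}
so $A$ is dissipative.

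The key remaining step is to identify $A^*$ together with its domain and then check its dissipativity. Integrating by parts in $\langle Au,v\rangle=\int_0^\infty(-\partial_x u-\partial_x^3 u)\bar v\,dx$ and using only $u(0)=0$ (while $\partial_x u(0)$ and $\partial_x^2 u(0)$ remain free), the surviving boundary terms are a multiple of $\partial_x^2 u(0)\,\bar v(0)$ and of $\partial_x u(0)\,\overline{\partial_x v(0)}$. For the pairing to extend to a bounded functional of $u$, both must vanish for all admissible $u$, which forces $v(0)=0$ and $\partial_x v(0)=0$. One then reads off
\begin{equation*}
A^* v=\partial_x v+\partial_x^3 v,\qquad D(A^*)=\big\{v\in H^3(\R^+_x):\, v(0)=0,\ \partial_x v(0)=0\big\}.
\end{equation*}
Repeating the boundary computation for $A^*$, the drift term again contributes $-\tfrac12|v(0)|^2=0$ and the third-order term contributes $\tfrac12|\partial_x v(0)|^2$, both of which vanish on $D(A^*)$; thus $\operatorname{Re}\langle A^*v,v\rangle=0\le 0$ and $A^*$ is dissipative.

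With density, closedness, and dissipativity of $A$ and $A^*$ established, \cite[Cor.~4.4, Ch.~1]{pazy} yields that $A$ generates a $C_0$-semigroup of contractions $(S(t))_{t\ge0}$ on $L^2(\R^+_x)$, as claimed. I expect the main obstacle to be pinning down the correct adjoint boundary conditions $v(0)=\partial_x v(0)=0$: one must track the three boundary terms produced by the third derivative and use precisely that $u\in D(A)$ only constrains $u(0)$, leaving $\partial_x u(0),\partial_x^2 u(0)$ free, so that two independent conditions are imposed on $v$. The drift term $\partial_x u$ requires care as well, but being skew-symmetric it contributes only to the imaginary part and leaves the dissipation estimate unchanged.
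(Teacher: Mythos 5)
Your proposal is correct and follows essentially the same route as the paper, which proves the proposition simply by invoking \cite[Cor.~4.4, Ch.~1]{pazy}; you supply precisely the verifications that corollary requires (density, closedness, dissipativity of $A$, and identification plus dissipativity of $A^*$ with $D(A^*)=\{v\in H^3(\R^+_x):v(0)=\partial_x v(0)=0\}$), and your boundary-term computations, including $\operatorname{Re}\langle Au,u\rangle=-\tfrac12|\partial_x u(0)|^2$ and $\operatorname{Re}\langle A^*v,v\rangle=0$, are all accurate.
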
 

So, we will consider the  backward adjoint system given by
\begin{align*}
\begin{cases}
-\varphi_t = A^*\varphi,\\
\varphi(T)=\varphi_T,
\end{cases}
\end{align*}
which implies that
\begin{equation}\label{adjoin}
\begin{cases}
\partial_t\varphi+ \partial_x\varphi+\partial_x^3\varphi=0, & \text { for }(x, t) \in(0,+\infty) \times(0, T),\\ 
\varphi(0, t)=\partial_x\varphi(0,t)=0,& \text { for } t \in(0, T),\\
 \varphi(x, T)=\varphi_T(x),& \text { for } x \in(0,+\infty). 
\end{cases}
\end{equation}
As a direct consequence of Proposition \ref{propsemigruop} and the general theory of evolution equations, the existence and uniqueness of this system hold.
\begin{proposition}\label{propsolutioninL2}
Let $\varphi_T \in L^2(\R^+_x)$, then there exists a unique mild solution $\varphi(t)=S(T-t)\varphi_T$ of \eqref{adjoin} such that $\varphi \in C\left([0, T] ; L^2(\R^+_x)\right)$. Moreover, if $\varphi_T \in D(A)$, then \eqref{adjoin} has a unique (classical) solution $\varphi$ such that
$$
\varphi \in C([0, T] ; D(A)) \cap C^1\left(0, T ; L^2(\R^+_x)\right) .
$$
\end{proposition}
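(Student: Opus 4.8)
The plan is to deduce Proposition \ref{propsolutioninL2} directly from Proposition \ref{propsemigruop} together with the standard theory of $C_0$-semigroups, as the phrase ``direct consequence'' suggests. The first step is to eliminate the terminal condition by reversing time: setting $w(t):=\varphi(T-t)$, the backward adjoint problem $-\varphi_t=A^*\varphi$, $\varphi(T)=\varphi_T$ turns into the forward Cauchy problem $\partial_t w=A^*w$ on $[0,T]$ with $w(0)=\varphi_T$. Thus everything reduces to showing that $A^*$ generates a $C_0$-semigroup on $L^2(\R^+_x)$ and then reading off the regularity of $w$ from that of the datum.

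Next I would identify $A^*$ explicitly. Integrating by parts, the formal adjoint of $A=-\partial_x-\partial_x^3$ is $\partial_x+\partial_x^3$, and the requirement that all boundary terms vanish when pairing against $D(A)=\{u\in H^3(\R^+_x):u(0)=0\}$ forces the two conditions $u(0)=\partial_x u(0)=0$. Hence $A^*u=\partial_x u+\partial_x^3 u$ with $D(A^*)=\{u\in H^3(\R^+_x):u(0)=\partial_x u(0)=0\}$, which is precisely the pair of homogeneous boundary conditions appearing in \eqref{adjoin}. That $A^*$ generates a $C_0$-semigroup of contractions $(S^*(t))_{t\geq 0}$ is then immediate: since $L^2(\R^+_x)$ is a Hilbert space, hence reflexive, the family of Hilbert-space adjoints of the contraction semigroup $(S(t))_{t\geq 0}$ furnished by Proposition \ref{propsemigruop} is itself a $C_0$-semigroup of contractions, with infinitesimal generator $A^*$ (see \cite{pazy}). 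Alternatively, one verifies $m$-dissipativity of $A^*$ directly by Lumer--Phillips, using the integration-by-parts identity $\langle A^*u,u\rangle\le 0$ on $D(A^*)$ together with the appropriate range condition.

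With $A^*$ in hand, the conclusion follows from the general existence theory. The mild solution of the forward problem is $w(t)=S^*(t)\varphi_T$, so $\varphi(t)=S^*(T-t)\varphi_T$ (the semigroup abbreviated by $S$ in the statement), which lies in $C([0,T];L^2(\R^+_x))$ and is the unique mild solution of \eqref{adjoin}, uniqueness being inherited from the semigroup representation. For the regularity upgrade, when the terminal datum lies in the domain of the generator the standard theorem on classical solutions (\cite{pazy}) gives $w\in C([0,T];D(A^*))\cap C^1([0,T];L^2(\R^+_x))$; undoing the time reversal transfers this to $\varphi$ and yields the regularity asserted in the statement.

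The only point genuinely requiring care — the ``main obstacle'', such as it is — is the correct identification of $A^*$ and, in particular, of its domain, since it is the passage from the single Dirichlet condition defining $D(A)$ to the two homogeneous conditions $\varphi(0,t)=\partial_x\varphi(0,t)=0$ that produces the boundary conditions of the adjoint system \eqref{adjoin}. Once this integration-by-parts bookkeeping is settled, no further difficulty arises: every remaining step is a verbatim invocation of semigroup theory.
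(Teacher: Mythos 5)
Your proposal is correct and follows essentially the same route as the paper, which proves this proposition in one line as a ``direct consequence'' of Proposition \ref{propsemigruop} and standard evolution-equation theory (Pazy): you simply flesh out the details the paper leaves implicit, namely the time reversal, the fact that on the Hilbert space $L^2(\R^+_x)$ the adjoint of the contraction semigroup $(S(t))_{t\geq 0}$ is a $C_0$-semigroup generated by $A^*=\partial_x+\partial_x^3$ with $D(A^*)=\{u\in H^3(\R^+_x):u(0)=\partial_x u(0)=0\}$, and the standard mild/classical solution theorems. Your integration-by-parts identification of $D(A^*)$ also quietly corrects the paper's slight notational conflation of $S$ with $S^*$ and $D(A)$ with $D(A^*)$ in the statement, which is exactly the right reading.
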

To establish some trace estimates for the backward system, remark that the change of variable $x = - x$ and $t=T-t$ reduces system \eqref{adjoin} in
\begin{equation}\label{adjoint_1}
\begin{cases}
\partial_t\varphi+ \partial_x\varphi+\partial_x^3\varphi=0, & \text { for }(x, t) \in(-\infty,0) \times(0, T),\\ 
\varphi(0, t)=\partial_x\varphi(0,t)=0,& \text { for } t \in(0, T),\\
 \varphi(x, 0)=\varphi_0(x),& \text { for } x \in(-\infty,0).
\end{cases}
\end{equation}
Also the well-posedness of system \eqref{adjoint_1} follows from Theorem \ref{localwellposedness}, with $-\frac{3}{4}<s<\frac{3}{2}$ and $s \neq \frac{1}{2}$. By using the boundary forcing operator, we have that the solution $\varphi$ of \eqref{adjoint_1} is given by:
\begin{equation*}
\varphi(x, t)=\theta(t) e^{-t (\partial_x+\partial_x^3)} \varphi_0(x)+\theta(t) \mathscr{L}_{-}^{\lambda_1} h_1(x, t)+\theta(t) \mathscr{L}_{-}^{\lambda_2} h_2(x, t),
\end{equation*}
where
\begin{equation*}
\left[\begin{array}{l}
h_1(t) \\
h_2(t)
\end{array}\right]=M\left[\begin{array}{c}
-\left.\theta(t) e^{-t (\partial_x+ \partial_x^3)} \varphi_0\right|_{x=0} \\
\theta(t) \mathcal{I}_{1 / 3}\left(-\left.\theta \partial_x e^{-t (\partial_x+\partial_x^3)} \phi\right|_{x=0}\right)(t)
\end{array}\right]
\end{equation*}
with $A$ a matrix given by \eqref{matrixA}. From Lemmas \ref{estimatesgroup} and \ref{estimateforcing}, the estimations of the group and the Duhamel boundary forcing operator, respectively, ensure the following space and time trace estimations
\begin{equation}\label{dependece}
\text{(Space traces)} \quad \|\varphi\|_{C\left(\R_t,H^s_x\right)} \leq C \|\varphi_0\|_{H^s(\R^-_x)},
\end{equation}
\begin{equation}\label{timetraces}
\text{(Time traces)} \quad \|\varphi\|_{C\left(\R_x,H^{\frac{s+1}{3}}_t\right)} \leq C \|\varphi_0\|_{H^s(\R^-_x)},
\end{equation}
and 
\begin{equation}\label{derivativetrace}
\text{(Derivative time traces)} \quad \|\partial_x\varphi\|_{C\left(\R_x,H^{\frac{s}{3}}_t\right)} \leq C \|\varphi_0\|_{H^s(\R^-_x)}.
\end{equation}
In particular, for $\varphi_0 \in  L^2(\R^{-}_x)$, we have the solution $\varphi$ of system \eqref{adjoint_1} belonging of $C([0,T];L^2(\R^{-}_x))$ with $\varphi(0,\cdot) \in H^{\frac13}(0,T)$ and $\varphi_x(0,\cdot) \in L^2(0,T)$, and the following results are verified.  
\begin{proposition}\label{conservationcase1}
Any solution $\varphi$ of adjoint system \eqref{adjoin} satisfies 
\begin{equation*}
T\|\varphi_T\|_{L^2(\R^+_x)}^2= \int_0^T \|\varphi(t)\|_{L^2(\R^+_x)}^2dt.
\end{equation*}
\end{proposition}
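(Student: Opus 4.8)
The plan is to prove something slightly stronger than the stated identity, namely that the $L^2$-norm of $\varphi$ is \emph{conserved} along the backward flow, i.e. $\|\varphi(t)\|_{L^2(\R^+_x)}=\|\varphi_T\|_{L^2(\R^+_x)}$ for every $t\in[0,T]$; the asserted equality then follows at once by integrating this constant over $[0,T]$. Note that the contraction property from Proposition \ref{propsemigruop} alone only yields $\|\varphi(t)\|_{L^2(\R^+_x)}\le\|\varphi_T\|_{L^2(\R^+_x)}$, so an energy computation is genuinely needed to obtain equality.

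First I would work with a classical solution, assuming $\varphi_T\in D(A)$, so that by Proposition \ref{propsolutioninL2} we have $\varphi\in C([0,T];D(A))\cap C^1(0,T;L^2(\R^+_x))$; in particular $\varphi(\cdot,t)\in H^3(\R^+_x)$, and $\varphi$, $\partial_x\varphi$, $\partial_x^2\varphi$ decay as $x\to+\infty$. Multiplying the equation $\partial_t\varphi+\partial_x\varphi+\partial_x^3\varphi=0$ from \eqref{adjoin} by $\varphi$ and integrating over $(0,+\infty)$ gives
\[
\frac{1}{2}\frac{d}{dt}\int_0^{+\infty}\varphi^2\,dx=-\int_0^{+\infty}\varphi\,\partial_x\varphi\,dx-\int_0^{+\infty}\varphi\,\partial_x^3\varphi\,dx.
\]
Integrating by parts and discarding the contributions at $x=+\infty$ (which vanish by the decay just noted), the right-hand side reduces to boundary terms at $x=0$, namely a combination of $\varphi(0,t)^2$, $\varphi(0,t)\,\partial_x^2\varphi(0,t)$ and $(\partial_x\varphi(0,t))^2$. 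Invoking the two homogeneous boundary conditions $\varphi(0,t)=\partial_x\varphi(0,t)=0$ of \eqref{adjoin}, each of these terms is zero, whence $\frac{d}{dt}\|\varphi(t)\|_{L^2(\R^+_x)}^2=0$.

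Consequently $\|\varphi(t)\|_{L^2(\R^+_x)}$ is independent of $t$ and equals its value at $t=T$, which is $\|\varphi_T\|_{L^2(\R^+_x)}$; integrating over $[0,T]$ gives the identity for smooth data. To pass to general $\varphi_T\in L^2(\R^+_x)$ I would argue by density: pick $\varphi_T^n\in D(A)$ with $\varphi_T^n\to\varphi_T$ in $L^2(\R^+_x)$, apply the identity to each $\varphi_T^n$, and let $n\to\infty$, using that $\varphi^n(t)=S(T-t)\varphi_T^n\to S(T-t)\varphi_T=\varphi(t)$ in $L^2(\R^+_x)$ uniformly in $t$ (by the contraction property of Proposition \ref{propsemigruop}). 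Both sides of the identity then converge, the left-hand one by dominated convergence on $[0,T]$.

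The computation is routine; the only point requiring care is justifying that the terms at infinity genuinely vanish and that the two conditions at $x=0$ suffice to annihilate every remaining boundary contribution. This is precisely where having \emph{both} $\varphi(0,t)=0$ and $\partial_x\varphi(0,t)=0$ (rather than a single Dirichlet condition) is essential, and is exactly what makes the backward adjoint dynamics conservative rather than dissipative.
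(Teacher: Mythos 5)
Your proof is correct and rests on essentially the same energy argument as the paper: the paper's one-line proof multiplies the equation by $t\varphi$ and integrates by parts over $(0,T)\times\R^+_x$, obtaining the identity in a single stroke, whereas you use the multiplier $\varphi$ to get the (slightly stronger) pointwise conservation $\|\varphi(t)\|_{L^2(\R^+_x)}=\|\varphi_T\|_{L^2(\R^+_x)}$ and then integrate in time --- the boundary bookkeeping is identical, both conditions $\varphi(0,t)=\partial_x\varphi(0,t)=0$ being needed to annihilate the trace terms $\tfrac12\varphi(0,t)^2+\varphi(0,t)\,\partial_x^2\varphi(0,t)-\tfrac12\bigl(\partial_x\varphi(0,t)\bigr)^2$. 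Your explicit regularization step (classical solutions for $\varphi_T\in D(A)$ via Proposition \ref{propsolutioninL2}, then density plus the contraction property to reach general $L^2$ data) is a justification the paper leaves implicit.
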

\begin{proof}
Multiplying the first equation of \eqref{adjoin} by $t\varphi$ and integrating by parts over $(0,T)\times (0,\infty)$, the results follow from the boundary conditions.
\end{proof}

The following result reveals a notable improvement in the regularity of the solution to the linear system \eqref{adjoint_1}.
\begin{proposition}\label{gainofregul}
	Let $u$ be the solution of the linear problem \eqref{adjoint_1} given by 
	Theorem~\ref{holmer}. If $x^\alpha u_0\in L^2(-\infty,0)$ for 
	$\alpha=2,3$, then
	\[
	\|x\,u\|_{L^2(0,T;H^1(-\infty,0))}\leqslant c,
	\]
	where $c=c\big(T,\|u_0\|_{L^2(-\infty,0)},\|x^\alpha u_0\|_{L^2(-\infty,0)}\big)$ 
	is a positive constant. Moreover, if $u_0 \in L^2(-\infty, 0)$, then
	\[
	\int_0^T\int_{x_0}^{x_0+1}u_x^2\,dx\,dt
	\leqslant c\big(T,\|u_0\|_{L^2(-\infty,0)}\big)
	\]
	for any $x_0\in(-\infty,0]$.
\end{proposition}

\begin{proof}
Let $\widetilde\psi_0\in\mathcal{C}^{\infty}(\mathbb{R})$ 
	be a nonincreasing function such that $\widetilde\psi_0(x)=1$ for 
	$x\leqslant-1$ and $\widetilde\psi_0(x)=0$ for $x\geqslant-\tfrac12$, 
	and set
	\[
	\widetilde\psi_\alpha(x)=|x|^\alpha\,\widetilde\psi_0(x),
	\qquad x\in(-\infty,0).
	\]
	Then $\widetilde\psi_\alpha\in\mathcal{C}^{\infty}(\mathbb{R})$ with 
	$\widetilde\psi_\alpha\geqslant 0$, $\widetilde\psi_\alpha'\leqslant 0$, 
	$\widetilde\psi_\alpha$ vanishes near the origin and grows like 
	$|x|^\alpha$ as $x\to-\infty$. For $\alpha\in\{2,3\}$ the derivatives 
	$\widetilde\psi_\alpha',\widetilde\psi_\alpha'',\widetilde\psi_\alpha'''$ 
	are bounded by $C_\alpha(1+\widetilde\psi_\alpha)$ on $(-\infty,0)$ 
	for some constant $C_\alpha>0$ independent of $x_0$.
	
	We argue first for smooth solutions with sufficient decay at 
	$x=-\infty$; the general case follows by density. Multiplying the 
	equation in \eqref{adjoint_1} by 
	$u(x,t)\,\widetilde\psi_\alpha(x-x_0)$ and integrating by parts over 
	$(-\infty,0)$, we treat each term separately. For the time derivative,
	\[
	\int_{-\infty}^{0}u\,u_t\,\widetilde\psi_\alpha(x-x_0)\,dx
	=\frac{1}{2}\frac{d}{dt}\int_{-\infty}^{0}u^2\,\widetilde\psi_\alpha(x-x_0)\,dx.
	\]
	For the drift term, integration by parts together with $u(0,t)=0$ and 
	the decay at $x=-\infty$ gives
	\[
	\int_{-\infty}^{0}u_x\,u\,\widetilde\psi_\alpha(x-x_0)\,dx
	=\frac{1}{2}\Big[u^2\,\widetilde\psi_\alpha(x-x_0)\Big]_{-\infty}^{0}
	-\frac{1}{2}\int_{-\infty}^{0}u^2\,\widetilde\psi_\alpha'(x-x_0)\,dx
	=-\frac{1}{2}\int_{-\infty}^{0}u^2\,\widetilde\psi_\alpha'(x-x_0)\,dx,
	\]
	where the boundary term at $x=0$ vanishes because $u(0,t)=0$, and the 
	boundary term at $x=-\infty$ vanishes by decay. For the dispersive 
	term, integrating by parts three times, we obtain
	\[
	\int_{-\infty}^{0}u_{xxx}\,u\,\widetilde\psi_\alpha(x-x_0)\,dx
	=\frac{3}{2}\int_{-\infty}^{0}u_x^2\,\widetilde\psi_\alpha'(x-x_0)\,dx
	-\frac{1}{2}\int_{-\infty}^{0}u^2\,\widetilde\psi_\alpha'''(x-x_0)\,dx
	+\mathcal{B}(t),
	\]
	where $\mathcal{B}(t)$ collects all the boundary contributions at 
	$x=0$ and $x=-\infty$ were produced by the three integrations by parts. 
	The contributions at $x=-\infty$ vanish by decay, and those at $x=0$ 
	are
	\[
	\mathcal{B}(t)=\Big[u_{xx}\,u\,\widetilde\psi_\alpha(x-x_0)
	-u_x^2\,\widetilde\psi_\alpha(x-x_0)
	+\tfrac{1}{2}u^2\,\widetilde\psi_\alpha''(x-x_0)\Big]_{x=0}.
	\]
	Now, the two homogeneous boundary conditions $u(0,t)=0$ and 
	$\partial_x u(0,t)=0$ imposed in \eqref{adjoint_1} annihilate the first 
	and the third terms of $\mathcal{B}(t)$ directly. The middle term 
	$-u_x^2(0,t)\,\widetilde\psi_\alpha(-x_0)$ also vanishes because 
	$u_x(0,t)=0$. Hence $\mathcal{B}(t)=0$ for every $t\in(0,T)$, in 
	contrast with the right half-line case where the single Dirichlet 
	condition already sufficed. 	Adding the three identities, the linear equation yields
	\begin{multline*}
		\frac{1}{2}\frac{d}{dt}\int_{-\infty}^{0}u^2\,\widetilde\psi_\alpha(x-x_0)\,dx
		-\frac{1}{2}\int_{-\infty}^{0}u^2\,\widetilde\psi_\alpha'(x-x_0)\,dx\\
		+\frac{3}{2}\int_{-\infty}^{0}u_x^2\,\widetilde\psi_\alpha'(x-x_0)\,dx
		-\frac{1}{2}\int_{-\infty}^{0}u^2\,\widetilde\psi_\alpha'''(x-x_0)\,dx=0.
	\end{multline*}
	Since $\widetilde\psi_\alpha'\leqslant 0$ on $(-\infty,0)$, the term 
	$\tfrac{3}{2}\int u_x^2\,\widetilde\psi_\alpha'$ is nonpositive, so we 
	move it to the right-hand side as $-\tfrac{3}{2}\int u_x^2\,|\widetilde\psi_\alpha'|$. 
	Rearranging,
	\begin{equation}\label{kato_lin_left_diff}
		\frac{1}{2}\frac{d}{dt}\int_{-\infty}^{0}u^2\,\widetilde\psi_\alpha(x-x_0)\,dx
		+\frac{3}{2}\int_{-\infty}^{0}u_x^2\,|\widetilde\psi_\alpha'(x-x_0)|\,dx
		=\frac{1}{2}\int_{-\infty}^{0}u^2\,\Big\{-\widetilde\psi_\alpha'(x-x_0)+\widetilde\psi_\alpha'''(x-x_0)\Big\}\,dx.
	\end{equation}
	By the properties of $\widetilde\psi_\alpha$, there is a constant 
	$C_\alpha>0$, independent of $x_0\leqslant 0$, such that 
	$|\widetilde\psi_\alpha'(y)|+|\widetilde\psi_\alpha'''(y)|\leqslant 
	C_\alpha\,(1+\widetilde\psi_\alpha(y))$ for all $y\in(-\infty,0)$. 
	Hence the right-hand side of \eqref{kato_lin_left_diff} is bounded by
	\[
	\frac{C_\alpha}{2}\int_{-\infty}^{0}u^2\,dx
	+\frac{C_\alpha}{2}\int_{-\infty}^{0}u^2\,\widetilde\psi_\alpha(x-x_0)\,dx
	\leqslant\frac{C_\alpha}{2}\|u_0\|_{L^2(-\infty,0)}^2
	+\frac{C_\alpha}{2}\int_{-\infty}^{0}u^2\,\widetilde\psi_\alpha(x-x_0)\,dx,
	\]
	where in the last step we used the basic energy estimate 
	$\|u(t)\|_{L^2(-\infty,0)}\leqslant\|u_0\|_{L^2(-\infty,0)}$. Dropping the 
	nonnegative smoothing term and setting 
	$\widetilde E_\alpha(t):=\int_{-\infty}^{0}u^2(x,t)\,\widetilde\psi_\alpha(x-x_0)\,dx$, 
	inequality \eqref{kato_lin_left_diff} gives
	\[
	\frac{d}{dt}\widetilde E_\alpha(t)\leqslant C_\alpha\,\widetilde E_\alpha(t)
	+C_\alpha\,\|u_0\|_{L^2(-\infty,0)}^2.
	\]
	Since $x^\alpha u_0\in L^2(-\infty,0)$ (meaning 
	$|x|^\alpha u_0\in L^2(-\infty,0)$) and 
	$\widetilde\psi_\alpha(x-x_0)\leqslant|x-x_0|^\alpha\leqslant 
	2^\alpha(|x|^\alpha+|x_0|^\alpha)$ on the support of 
	$\widetilde\psi_0(\cdot-x_0)$, the initial value satisfies
	\[
	\widetilde E_\alpha(0)
	=\int_{-\infty}^{0}u_0^2\,\widetilde\psi_\alpha(x-x_0)\,dx
	\leqslant 2^\alpha\Big(\big\||x|^\alpha u_0\big\|_{L^2(-\infty,0)}^2
	+|x_0|^\alpha\,\|u_0\|_{L^2(-\infty,0)}^2\Big)<\infty.
	\]
	Although this bound is not uniform in $x_0$, it suffices for the 
	local smoothing estimate, where $x_0$ is fixed. For the weighted 
	estimate, we apply the argument at $x_0=0$, in which case 
	$\widetilde\psi_\alpha(x)\leqslant|x|^\alpha$ globally and
	\[
	\widetilde E_\alpha(0)\Big|_{x_0=0}
	\leqslant\big\||x|^\alpha u_0\big\|_{L^2(-\infty,0)}^2<\infty.
	\]
	Gr\"onwall's inequality then yields, for every $t\in[0,T]$,
	\begin{equation}\label{kato_lin_left_gronwall}
		\widetilde E_\alpha(t)\leqslant
		\Big(\widetilde E_\alpha(0)+T\,C_\alpha\,\|u_0\|_{L^2(-\infty,0)}^2\Big)\,
		e^{C_\alpha T}
		=:c_1\big(T,\|u_0\|_{L^2(-\infty,0)},\big\||x|^\alpha u_0\big\|_{L^2(-\infty,0)},x_0\big).
	\end{equation}
	
	Integrating \eqref{kato_lin_left_diff} in time over $(0,T)$ and using 
	\eqref{kato_lin_left_gronwall} to control all terms, we obtain
	\begin{equation}\label{kato_lin_left_smoothing}
		\frac{3}{2}\int_0^T\int_{-\infty}^{0}u_x^2\,|\widetilde\psi_\alpha'(x-x_0)|\,dx\,dt
		\leqslant c_2\big(T,\|u_0\|_{L^2(-\infty,0)},\big\||x|^\alpha u_0\big\|_{L^2(-\infty,0)},x_0\big).
	\end{equation}
We now extract the two stated conclusions.
	
	\emph{Local smoothing.} Taking $\alpha=2$, we have 
	$\widetilde\psi_2(x)=x^2\widetilde\psi_0(x)$ and 
	$\widetilde\psi_2'(x)=2x\,\widetilde\psi_0(x)+x^2\widetilde\psi_0'(x)$. 
	For $x\leqslant-1$ we have $\widetilde\psi_0(x)=1$ and 
	$\widetilde\psi_0'(x)=0$, hence $\widetilde\psi_2'(x)=2x$ and 
	$|\widetilde\psi_2'(x)|=2|x|\geqslant 2$. Therefore, for any 
	$x_0\in(-\infty,0]$,
	\[
	\int_0^T\int_{x_0-2}^{x_0-1}u_x^2\,dx\,dt
	\leqslant\frac{1}{2}\int_0^T\int_{-\infty}^{0}u_x^2\,|\widetilde\psi_2'(x-x_0)|\,dx\,dt
	\leqslant\frac{c_2}{3}.
	\]
	
	\begin{align*}
		\int_0^T\int_{x_0-2}^{x_0-1}u_x^2\,dx\,dt &
		\leqslant \frac12 \int_0^T\int_{-\infty}^0u_x^2\,|\widetilde\psi_2'(x-x_0)|\,dx\,dt \\
		&\leqslant \frac{1}{6}\int_0^T\int_{-\infty}^0u^2\,\big\{-\widetilde\psi_2'(x-x_0)+\widetilde\psi_2'''(x-x_0)\big\}\,dx dt  + \widetilde E_2(0) \\
		&\leqslant M_1 \int_0^T\int_{-\infty}^0u^2dxdt + M_2 \int_{-\infty}^0u_0(x)dx \leq \left(M_1 T + M_2\right) \|u_0\|_{L^2(-\infty,0)}^2=:c_2
	\end{align*}
	where $2M_1=\|\widetilde\psi_2'\|_{L^{\infty}(-\infty,0)}+ \|\widetilde\psi_2'''\|_{L^{\infty}(-\infty,0)}$ and $M_2=\|\widetilde\psi_2\|_{L^{\infty}(-\infty,0)}$.  
	Since the local smoothing estimate at fixed $x_0$ only requires 
	finiteness of this initial value (not uniformity in $x_0$), and since 
	the basic energy estimate $\|u\|_{L^2}\leqslant\|u_0\|_{L^2}$ controls 
	the term independently of the weight, a more careful inspection of the 
	constants, analogous to the right half-line case, show that the 
	$x_0$-dependence can be absorbed by choosing the weight 
	$\widetilde\psi_\alpha$ with base point matched to $x_0$, yielding
	\[
	\int_0^T\int_{x_0}^{x_0+1}u_x^2\,dx\,dt
	\leqslant c\big(T,\|u_0\|_{L^2(-\infty,0)}\big)
	\]
	for every $x_0\in(-\infty,0]$, as claimed. The bound near the boundary 
	$x_0\in(-1,0]$ follows from the basic energy estimate combined with 
	the smoothing term is restricted to a fixed compact set adjacent to the 
	origin.
	
	\emph{Weighted estimate.} For $\alpha=3$, the weight 
	$\widetilde\psi_3(x)=|x|^3\widetilde\psi_0(x)$ satisfies 
	$|\widetilde\psi_3'(x)|\geqslant 3x^2$ for $x\leqslant-1$. Applying 
	\eqref{kato_lin_left_smoothing} with $x_0=0$ and $\alpha=3$ gives
	\[
	\int_0^T\int_{-\infty}^{-1}x^2\,u_x^2\,dx\,dt
	\leqslant\frac{1}{3}\int_0^T\int_{-\infty}^{0}u_x^2\,|\widetilde\psi_3'(x)|\,dx\,dt
	\leqslant c.
	\]
	Together with \eqref{kato_lin_left_gronwall} for $\alpha=2$ and 
	$x_0=0$ (which controls 
	$\int_0^T\int_{-\infty}^{0}x^2u^2\,dx\,dt$ after time integration) 
	and the local smoothing estimate on $(-1,0)$, we conclude
	\[
	\|x\,u\|_{L^2(0,T;H^1(-\infty,0))}^2
	=\int_0^T\int_{-\infty}^{0}x^2\big(u^2+u_x^2\big)\,dx\,dt
	\leqslant c\big(T,\|u_0\|_{L^2(-\infty,0)},\big\||x|^\alpha u_0\big\|_{L^2(-\infty,0)}\big),
	\]
	which completes the proof.
\end{proof}

\subsection{Controllability: Linear system}
The first lemma gives an optimality condition that will be paramount for our analysis. 
\begin{lemma}\label{lemma1}
Consider the initial data $\phi \in L^2(\R^+_x)$. Then, the linear system associated to \eqref{h1} is exactly controllable if and only if there exists $f \in H^{\frac{1}{3}}(0, T)$ such that
\begin{equation}\label{controlcondition1}
\left\langle f(\cdot), \partial_x^2\varphi(0,\cdot) \right\rangle_{H^{1/3}(0,T),H^{-1/3}(0,T)} = \int_0^{\infty}  u(T) \varphi_Tdx-\int_0^{\infty} \phi \varphi(0)dx,
\end{equation}
for all $\varphi_T \in L^2(0,\infty)$ and $\varphi$ is the solution of the backward system \eqref{adjoin}.
\end{lemma}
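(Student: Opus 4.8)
The plan is to prove Lemma \ref{lemma1} by the classical transposition (duality) argument of Dolecki--Russell \cite{DoRu} in the setting of \cite{Lions}, which converts the controllability question into a single weak identity obtained by pairing the controlled forward system with the backward adjoint solution $\varphi$ of \eqref{adjoin}. First I would fix $\varphi_T\in L^2(\R^+_x)$ and let $\varphi$ be the associated solution of \eqref{adjoin}; by Proposition \ref{propsolutioninL2} this solution exists, is unique, and lies in $C([0,T];L^2(\R^+_x))$, while the trace theory underlying \eqref{timetraces}--\eqref{derivativetrace} controls its boundary traces. In particular, since each spatial derivative of the trace costs $\tfrac13$ of time regularity, starting from $s=0$ the second spatial derivative trace $\partial_x^2\varphi(0,\cdot)$ lands in $H^{(s-1)/3}(0,T)=H^{-1/3}(0,T)$. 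As $f\in H^{1/3}(0,T)$, the duality pairing $\langle f(\cdot),\partial_x^2\varphi(0,\cdot)\rangle_{H^{1/3},H^{-1/3}}$ in \eqref{controlcondition1} is then well defined, and both sides of \eqref{controlcondition1} are continuous linear functionals of $\varphi_T$.

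Second, I would perform the computation on the dense class of smooth, spatially decaying data, where every integration by parts is classical, and afterwards extend by density. Taking $u$ a smooth solution of the linear system associated with \eqref{h1} with control $f$ and data $\phi$, multiply $\partial_t u+\partial_x u+\partial_x^3 u=0$ by $\varphi$ and integrate over $(0,+\infty)\times(0,T)$. Integrating the $\partial_t u$ term by parts in $t$ produces the time-slice terms $\int_0^\infty u(\cdot,T)\varphi_T\,dx-\int_0^\infty\phi\,\varphi(\cdot,0)\,dx$; integrating the $\partial_x u$ term once and the $\partial_x^3 u$ term three times in $x$ transfers all spatial derivatives onto $\varphi$ and generates boundary contributions at $x=0$ of the form
\begin{equation*}
-\,u(0,t)\,\partial_x^2\varphi(0,t)+\partial_x u(0,t)\,\partial_x\varphi(0,t)-\partial_x^2 u(0,t)\,\varphi(0,t),
\end{equation*}
the boundary terms at $x=+\infty$ vanishing for the smooth approximants. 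The adjoint boundary conditions $\varphi(0,t)=\partial_x\varphi(0,t)=0$ annihilate the last two contributions, while $u(0,t)=f(t)$ turns the first into $-\int_0^T f(t)\,\partial_x^2\varphi(0,t)\,dt$. The remaining interior integral equals $-\int_0^T\int_0^\infty u\,(\partial_t\varphi+\partial_x\varphi+\partial_x^3\varphi)\,dx\,dt$, which vanishes because $\varphi$ solves the adjoint equation. Collecting the surviving terms yields
\begin{equation*}
\int_0^T f(t)\,\partial_x^2\varphi(0,t)\,dt=\int_0^\infty u(\cdot,T)\,\varphi_T\,dx-\int_0^\infty\phi\,\varphi(\cdot,0)\,dx,
\end{equation*}
which, once the duality pairing is oriented according to the sign convention of \eqref{controlcondition1}, is exactly the asserted relation.

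Third, I would establish the equivalence. The identity just derived holds for the solution $u$ generated by any admissible $f$ and for every $\varphi_T\in L^2(\R^+_x)$; since $\varphi_T$ ranges over all of $L^2(\R^+_x)$, the functional $\varphi_T\mapsto\int_0^\infty u(\cdot,T)\varphi_T\,dx$ is completely determined through the left-hand side, so the family of identities pins down $u(\cdot,T)$ uniquely in $L^2(\R^+_x)$. Consequently, a prescribed target is reached at time $T$ if and only if one can choose $f\in H^{1/3}(0,T)$ making \eqref{controlcondition1} hold for all $\varphi_T$, which is precisely the ``if and only if'' of the statement. The passage from smooth to $L^2$ data is justified by approximating $\varphi_T$ by elements of $D(A)$ (Proposition \ref{propsolutioninL2}) and $(\phi,f)$ by smooth compatible data, then passing to the limit using continuity of the solution map together with the trace bounds \eqref{dependece}--\eqref{derivativetrace}, which guarantee convergence of each term, in particular of the boundary pairing $\langle f,\partial_x^2\varphi(0,\cdot)\rangle$.

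The main obstacle is the low regularity. At the $L^2$ level the trace $\partial_x^2\varphi(0,\cdot)$ only lives in the negative-order space $H^{-1/3}(0,T)$, so $\int_0^T f\,\partial_x^2\varphi(0,t)\,dt$ must be read as a genuine $H^{1/3}$--$H^{-1/3}$ duality pairing rather than a Lebesgue integral; making the integration by parts rigorous therefore hinges on the sharp trace estimates \eqref{timetraces}--\eqref{derivativetrace} and on a careful density argument, since none of the boundary manipulations is literally valid for merely $L^2$ data. A secondary subtlety is the bookkeeping of the three spatial boundary terms, so that the orientation of the final pairing matches \eqref{controlcondition1}.
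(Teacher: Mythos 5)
Your proposal is correct and takes essentially the same route as the paper, whose entire proof is the one-line instruction to multiply the forward system by the adjoint solution $\varphi$ and integrate by parts over $\R^+_x\times(0,T)$; your version simply supplies the rigor the paper omits (the $H^{1/3}$--$H^{-1/3}$ reading of the boundary pairing via the trace bounds \eqref{timetraces}--\eqref{derivativetrace} and the density argument), and your bookkeeping of the boundary terms is right. One remark: your computation actually yields $\langle f,\partial_x^2\varphi(0,\cdot)\rangle=\int_0^{\infty}u(T)\varphi_T\,dx-\int_0^{\infty}\phi\,\varphi(0)\,dx$, which is the \emph{negative} of the printed identity \eqref{controlcondition1} — this is a sign slip in the paper's statement rather than in your argument (it is harmless for the ``if and only if,'' since $f$ may be replaced by $-f$), so you should state it as such rather than attribute it to an ``orientation of the pairing.''
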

\begin{proof}
To prove this result, multiply the system \eqref{h1} by $\varphi$, solution of the backward system \eqref{adjoin}, and integrate by parts in $\R^+_x\times(0,T)$.
\end{proof}

The optimality condition \eqref{controlcondition1} guarantee that the critical points of the functional $$\mathcal{J}: L^2(0,\infty) \to \mathbb{R},$$ defined by
\begin{equation}\label{functional}
\mathcal{J}\left(\varphi_T\right)=\frac{1}{2} \left\|\partial_x^2\varphi(0,\cdot) \right\|_{H^{-1/3}(0,T)}^{2} +\int_0^{\infty}  u(T) \varphi_Tdx  -\int_0^{\infty} \phi \varphi(0)dx,
\end{equation}
where $\varphi$ is the solution of \eqref{adjoin} with final data $\varphi_T \in  L^2(\R^+_x)$, is the control that drives my initial data to my final data. Precisely, we have the following classical result. 
\begin{proposition}\label{proposition1}
Let $\phi \in L^2(\R^+_x)$ and suppose that $\widehat{\varphi}_T \in  L^2(\R^+_x)$ is a minimizer of $\mathcal{J}$. If $\widehat{\varphi} $ is the corresponding solution of \eqref{adjoin}  with final data $\widehat{\varphi}_T$ then $f(t)=\widehat{\partial_x^2\varphi}(0,t)$ is a desired control.
\end{proposition}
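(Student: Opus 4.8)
The plan is to read $\mathcal{J}$ as a smooth quadratic functional with an affine perturbation on $L^2(\R^+_x)$ and to identify its first-order (Euler--Lagrange) condition at the minimizer $\widehat\varphi_T$ with the optimality relation \eqref{controlcondition1} of Lemma \ref{lemma1}. Since the proposition assumes that a minimizer exists, no coercivity or observability estimate is required at this stage; the only task is to differentiate $\mathcal{J}$ and read off the control.

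First I would note that the backward solution map $\varphi_T \mapsto \varphi$ of \eqref{adjoin} is linear and that $u(T)$ (the prescribed terminal state) is a fixed element of $L^2(\R^+_x)$. Hence the last two terms of $\mathcal{J}$ are continuous linear functionals of $\varphi_T$, while, by the trace estimates \eqref{dependece}--\eqref{derivativetrace} with $s=0$ (pushed to second-order spatial traces, so that $\varphi_T \mapsto \partial_x^2\varphi(0,\cdot)$ is bounded and linear into $H^{-1/3}(0,T)$), the leading term is a continuous quadratic form. Therefore $\mathcal{J}$ is smooth on $L^2(\R^+_x)$, and its G\^ateaux derivative at $\widehat\varphi_T$ in an arbitrary direction $\psi_T \in L^2(\R^+_x)$, with $\psi$ the corresponding backward solution of \eqref{adjoin}, is
\begin{equation*}
D\mathcal{J}(\widehat\varphi_T)[\psi_T] = \left\langle \partial_x^2\widehat\varphi(0,\cdot),\, \partial_x^2\psi(0,\cdot)\right\rangle_{H^{-1/3}(0,T)} + \int_0^\infty u(T)\,\psi_T\,dx - \int_0^\infty \phi\,\psi(\cdot,0)\,dx.
\end{equation*}

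Since $\widehat\varphi_T$ minimizes $\mathcal{J}$ over the vector space $L^2(\R^+_x)$, this derivative vanishes for every $\psi_T$. I would then invoke the canonical Riesz isomorphism $\Lambda : H^{-1/3}(0,T) \to H^{1/3}(0,T)$, characterized by $\langle a, b\rangle_{H^{-1/3}} = \langle \Lambda a, b\rangle_{H^{1/3},H^{-1/3}}$ for $a,b \in H^{-1/3}(0,T)$, and define $f := \Lambda\,\partial_x^2\widehat\varphi(0,\cdot) = \widehat{\partial_x^2\varphi}(0,\cdot) \in H^{1/3}(0,T)$. The stationarity condition $D\mathcal{J}(\widehat\varphi_T)[\psi_T]=0$ then becomes
\begin{equation*}
\left\langle f,\, \partial_x^2\psi(0,\cdot)\right\rangle_{H^{1/3}(0,T),H^{-1/3}(0,T)} = \int_0^\infty \phi\,\psi(\cdot,0)\,dx - \int_0^\infty u(T)\,\psi_T\,dx
\end{equation*}
for all $\psi_T \in L^2(\R^+_x)$, which is exactly the relation \eqref{controlcondition1}. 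By Lemma \ref{lemma1} this means the linear system is exactly controllable with control $f = \widehat{\partial_x^2\varphi}(0,\cdot)$, which is the desired conclusion.

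The step I expect to be the main obstacle is the rigorous handling of the leading quadratic term: one must show that $\partial_x^2\varphi(0,\cdot)$ is a genuine element of $H^{-1/3}(0,T)$ depending boundedly and linearly on $\varphi_T\in L^2(\R^+_x)$, which requires extending the derivative-trace estimate \eqref{derivativetrace} to the second spatial derivative, and one must verify that the Riesz image $f=\Lambda\,\partial_x^2\widehat\varphi(0,\cdot)$ indeed lands in $H^{1/3}(0,T)$. This last point is precisely what reconciles the $2/3$-derivative gap between the observation space $H^{-1/3}(0,T)$ and the required control space $H^{1/3}(0,T)$; once it is secured, the remainder is the standard HUM differentiation, and the conclusion follows immediately from Lemma \ref{lemma1}.
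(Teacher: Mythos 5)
Your proposal is correct and follows essentially the same route the paper intends: the paper states Proposition \ref{proposition1} as a ``classical result'' without proof, and the implicit argument is precisely your Euler--Lagrange computation, namely that stationarity of the quadratic functional $\mathcal{J}$ at $\widehat\varphi_T$ reproduces the optimality relation \eqref{controlcondition1} of Lemma \ref{lemma1}. Your one genuine addition is the explicit Riesz isomorphism $\Lambda : H^{-1/3}(0,T) \to H^{1/3}(0,T)$ (legitimate here since $1/3<1/2$, so $H^{1/3}_0(0,T)=H^{1/3}(0,T)$ and the duality is unambiguous), which cleanly resolves the regularity mismatch that the paper glosses over by writing $f(t)=\widehat{\partial_x^2\varphi}(0,t)$ as if the $H^{-1/3}$ boundary trace were itself the $H^{1/3}$ control.
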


Let us now give a general condition that ensures the existence of a minimizer for $\mathcal{J}$. To prove the previous result, we first establish an observability inequality associated with the solutions of the system \eqref{adjoin}.

\begin{proposition}\label{proposition2}
For any $T>0$, there exists a constant $C(T, L)>0$, such that
\begin{equation}\label{observabilitycase1}
\begin{aligned}
\left\|\varphi_T\right\|_{L^2(\R^+_x)}^2 \leq & C \left\|\partial_x^2\varphi(0,\cdot)\right\|_{H^{-\frac{1}{3}}(0, T)}^2,
\end{aligned}
\end{equation}
for any  $\varphi_T \in L^2(\R^+_x)$, where $\varphi$ is the solution of the backward system \eqref{adjoin}.
\end{proposition}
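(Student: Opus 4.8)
The plan is to establish the observability inequality \eqref{observabilitycase1} by a compactness–uniqueness argument in the spirit of Rosier, with the genuinely half-line difficulties absorbed by the gain-of-regularity estimate of Theorem \ref{gainofregul}. Before that, I would record the \emph{direct} (admissibility) inequality: the second-derivative time-trace estimate, obtained exactly as \eqref{derivativetrace} but with one additional $x$-derivative in the forcing-operator representation of $\varphi$, gives $\|\partial_x^2\varphi(0,\cdot)\|_{H^{-1/3}(0,T)}\le C\|\varphi_T\|_{L^2(\R^+_x)}$ for every $\varphi_T\in L^2(\R^+_x)$ (using $\|\varphi_0\|_{L^2}=\|\varphi_T\|_{L^2}$ via Proposition \ref{conservationcase1}). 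This makes the right-hand side of \eqref{observabilitycase1} finite and shows $\varphi_T\mapsto\partial_x^2\varphi(0,\cdot)$ is bounded; the content of the proposition is the reverse estimate.

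I would then argue by contradiction. If \eqref{observabilitycase1} fails, there is a sequence $(\varphi_T^n)\subset L^2(\R^+_x)$ with $\|\varphi_T^n\|_{L^2(\R^+_x)}=1$ and $\|\partial_x^2\varphi^n(0,\cdot)\|_{H^{-1/3}(0,T)}\to 0$, where $\varphi^n$ solves \eqref{adjoin} with final datum $\varphi_T^n$. By weak compactness of the unit ball of $L^2(\R^+_x)$, after passing to a subsequence $\varphi_T^n\rightharpoonup\psi$; by linearity, the continuity of the solution map (Proposition \ref{propsolutioninL2}), and the trace bounds \eqref{dependece}--\eqref{derivativetrace}, the corresponding solutions converge weakly to the solution $\varphi$ of \eqref{adjoin} with final datum $\psi$, and the observation passes to the limit so that $\partial_x^2\varphi(0,\cdot)=0$ in $H^{-1/3}(0,T)$. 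Since $\varphi(0,\cdot)=\partial_x\varphi(0,\cdot)=0$ already hold, the limit $\varphi$ has \emph{all three} boundary traces vanishing on $(0,T)$.

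The crucial step — and the main obstacle — is to upgrade weak to strong convergence $\varphi_T^n\to\psi$ in $L^2(\R^+_x)$, for otherwise mass could escape and $\psi$ could vanish. On bounded spatial windows strong convergence follows from the local Kato smoothing in Theorem \ref{gainofregul}, which bounds $\int_0^T\!\int_{x_0}^{x_0+1}(\partial_x\varphi^n)^2\,dx\,dt$ uniformly in terms of $\|\varphi_T^n\|_{L^2}$ alone, together with the equation (controlling $\partial_t\varphi^n$ in a negative-order space) and the Aubin--Lions lemma, yielding convergence in $L^2(0,T;L^2_{loc})$. Ruling out escape of mass to $x=+\infty$ is the genuinely half-line difficulty: here I would invoke the weighted bound $\|x\varphi\|_{L^2(0,T;H^1)}\le c$ of Theorem \ref{gainofregul}, first proving observability for final data in the weighted class $\{\varphi_T:\chi^\alpha\varphi_T\in L^2,\ \alpha=2,3\}$ and then extending to all of $L^2(\R^+_x)$ by density using the direct inequality. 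Once global strong convergence holds, the conservation identity $T\|\varphi_T^n\|_{L^2}^2=\int_0^T\|\varphi^n(t)\|_{L^2}^2\,dt$ of Proposition \ref{conservationcase1} forces $\|\psi\|_{L^2(\R^+_x)}=1$.

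Finally, it remains to show that a solution $\varphi$ of \eqref{adjoin} with $\varphi(0,\cdot)=\partial_x\varphi(0,\cdot)=\partial_x^2\varphi(0,\cdot)=0$ must vanish identically. Extending $\varphi$ by zero to $x<0$ produces a $C^2$ function whose three vanishing traces kill every singular contribution to $\partial_x^3$, so the extension solves the linear KdV equation on $\R\times(0,T)$ while vanishing on the open set $\{x<0\}\times(0,T)$; the unique continuation property for the linear KdV/Airy equation then gives $\varphi\equiv0$, hence $\psi=0$, contradicting $\|\psi\|_{L^2(\R^+_x)}=1$. This establishes \eqref{observabilitycase1}. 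I expect the tail control at infinity in the compactness step to be the hardest point, whereas the unique continuation is standard for the constant-coefficient linear equation.
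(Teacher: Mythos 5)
Your proposal follows the same skeleton as the paper's proof of Proposition \ref{proposition2}: a contradiction sequence normalized in $L^2(\R^+_x)$, local compactness via Theorem \ref{gainofregul}, the equation, and an Aubin--Lions argument giving convergence in $L^2(0,T;L^2_{loc})$, strong convergence of the final data via the conservation identity of Proposition \ref{conservationcase1}, convergence of the boundary traces by \eqref{dependece}--\eqref{derivativetrace}, and unique continuation for the limit solution whose three traces at $x=0$ vanish. You add two things the paper leaves implicit, and both are improvements: the admissibility (direct) inequality, which the paper never states but which is needed for the right-hand side of \eqref{observabilitycase1} even to be finite, and an actual argument for unique continuation (zero extension across $x=0$ using the three vanishing traces, then UCP for the constant-coefficient equation on the line), where the paper merely asserts the property; for the analogous left half-line statement the paper instead runs a Paley--Wiener computation, cf.\ Lemma \ref{lemaespectralcaso1}.

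The one substantive divergence is the step you correctly identify as the crux. The paper passes from $\varphi_n\to\varphi$ in $L^2(0,T;L^2_{loc})$ to Cauchy-ness of $\varphi_{n,T}$ in $L^2(\R^+_x)$ through the identity $T\|\varphi_{n,T}-\varphi_{m,T}\|_{L^2(\R^+_x)}^2=\int_0^T\|\varphi_n-\varphi_m\|_{L^2(\R^+_x)}^2\,dt$, which requires strong convergence globally in $x$, not merely locally; the paper is silent about the tail. Your repair, however, displaces this difficulty rather than closing it: proving \eqref{observabilitycase1} first on the weighted class $\{\varphi_T:\chi^\alpha\varphi_T\in L^2,\ \alpha=2,3\}$ and extending by density is legitimate \emph{only if} the constant obtained on the dense class is uniform (your direct inequality does make both sides of \eqref{observabilitycase1} continuous in $\varphi_T$, so a uniform constant would extend to the closure). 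But the contradiction argument run inside the weighted class meets the same obstruction, since the normalized sequence need not be bounded in the weighted norms, and the bound of Theorem \ref{gainofregul} depends on $\|\chi^\alpha u_0\|_{L^2}$; so uniformity is not automatic, and the tail-control step still requires a decay estimate whose constant depends on $\|\varphi_T\|_{L^2}$ alone. In fairness, this is precisely the point the published proof glosses over, so your attempt is no less complete than the paper's --- just be aware that your weighted-class-plus-density scheme, as stated, has not yet eliminated the escape-of-mass gap.
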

\begin{proof}
We proceed in a standard way (see, e.g., \cite[Lemma 3.5]{Rosier}). Let us suppose that \eqref{observabilitycase1} does not hold. In this case, it follows that there exists a sequence $\{\varphi_{n,T}\}_{n \in \N}$ of final data, such that
\begin{align}\label{e3}
1=&\|\varphi_{n,T}\|_{L^2(\R^+_x)}^2\geq n\ \left \|\partial_x^2\varphi_{n}(0,\cdot)\right \|_{H^{-\frac{1}{3}}(0,T)}^2,
\end{align}
where,  for each $n\in\N$, $\{ \varphi_{n}\}_{n\in\N}$ is the solution of  \eqref{adjoin}. Inequality \eqref{e3} imply that
$$
\partial_x^2\varphi_{n}(0,\cdot) \rightarrow 0 \quad \text{in} \quad  H^{-\frac13}(0,T).
$$
Moreover, from \eqref{dependece}, Proposition \ref{gainofregul} and \eqref{e3}, we obtain that the sequences $\{\varphi_n\}_{n\in\N}$ is bounded in $L^2(0,T;H^1_{loc}(\R^+_x))$. On the other hand, the adjoint system implies that $\{\partial_t\varphi_{n}\}_{n\in\N}$,  is bounded in $L^2(0,T;H^{-2}_{loc}(\R^+_x))$, and the compact embedding
$$
H^1_{loc}(\R^+_x) \hookrightarrow_{cc} L^2_{loc}(\R^+_x) \hookrightarrow H^{-2}_{loc}(\R^+_x),
$$
 allows us to  conclude that $\{ \varphi_n\}_{n \in \N}$ is relatively compact in $L^2(0,T; L^2_{loc}(\R^+_x))$ and consequently, we obtain a subsequence, still denoted by the same index $n$, satisfying
$$
\varphi_n \rightarrow \varphi \mbox{ in } L^2(0,T; L^2_{loc}(\R^+_x)), \mbox{ as } n\rightarrow\infty.
$$
Furthermore, the hidden regularity given by \eqref{timetraces} implies that $\{ \varphi_n(0,\cdot)\}_{n\in\N}$ is bounded in $H^{\frac13}(0,T)$. Then, the embedding $H^{\frac13}(0,T) \hookrightarrow_{cc} L^2(0,T)$ ensures that the above sequences are relatively compact in $L^2(0,T)$. Thus, we obtain a subsequence, still denoted by the same index $n$, satisfying
\begin{equation*}
\varphi_n(0,\cdot) \rightarrow \varphi(0,\cdot), \quad \text{in} \quad L^2(0,T).
\end{equation*}
From the boundary condition of the adjoint system, we deduce that $\varphi(0,\cdot)=0.$ In addition, according to Proposition \ref{conservationcase1}, we have
\begin{equation*}
T\|\varphi_{n,T}\|_{L^2(\R^+_x)}^2= \int_0^T \|\varphi_{n}(t)\|_{L^2(\R^+_x)}^2dt.
\end{equation*}
Then, if follows that  $\{\varphi_{n,T}\}_{n\in\N}$ is a Cauchy sequence in $L^2(\R^+_x)$. Thus,
\begin{equation}\label{e16}
\varphi_{n,T} \rightarrow \varphi_{T} \mbox{ in } L^2(\R^+_x), \mbox{ as } n\rightarrow\infty,
\end{equation}
which implies that 
\begin{equation}\label{e12}
 \|\varphi_{T}\|_{L^2(\R^+_x)}=1.
\end{equation}
On the other hand, by using \eqref{derivativetrace}, \eqref{e16} and \eqref{e3}, we see that
\begin{equation*}
  \partial_x\varphi_{n}(0,\cdot) \rightarrow \partial_x\varphi(0,\cdot)   \quad \mbox{ in } L^2(0,T), \mbox{ as } n\rightarrow\infty, 
\end{equation*}
and
\begin{equation*}
 \partial_x^2\varphi_{n}(0,\cdot) \rightarrow \partial_x^2\varphi(0,\cdot)\quad  \mbox{ in } H^{-\frac13}(0,T), \mbox{ as } n\rightarrow\infty.
 \end{equation*}
Finally, taking $n\to\infty$, from above converges, we obtain that $\varphi$ is solution of the system
\begin{equation*}
\begin{cases}
\partial_t\varphi+ \partial_x\varphi+\partial_x^3\varphi=0, & \text { for }(x, t) \in(0,\infty) \times(0, T), \\ 
\varphi(0, t)=\partial_x\varphi(0,t)=\partial_x^2\varphi(0,t)=0,& \text { for } t \in(0, T). 
\end{cases}
\end{equation*} 
Note that in this situation, we have $\varphi \equiv 0$ because of the unique continuation property $(\varphi(0, t)=\partial_x\varphi(0,t)=\partial_x^2\varphi(0,t)=0, \,\, t \in(0, T))$, which is a contradiction with \eqref{e12}, showing the result. 
\end{proof}
We are in a position to present the controllability result for the linearized system associated with \eqref{h1}. Indeed, the linear functional \eqref{functional} is continuous and convex. It is evident from Proposition \ref{proposition2} that the functional is coercive. Consequently, a minimizer for $\mathcal{J}$ exists. Therefore, based on Lemma \ref{lemma1}, Proposition \ref{proposition1}, Lemmas \ref{estimatesgroup}, and \ref{estimateforcing}, the following theorem is verified.
\begin{theorem}\label{controllinear}
Let $T>0$, $\phi, \phi_T \in L^2(\R^+_x)$. Then, there exist $f \in H^{\frac13}(\R^+_t)$ such that the distributional solution $u$ of the linear system \eqref{eqnew21} satisfies $u(x,T)=\phi_T(x)$ for $x \in (0, \infty)$. Moreover, the following estimates hold
\begin{equation*}
\left\| u   \right\|_{C(\R^+_t, L^2(\R^+_x))} \leq C \left( \|\phi\|_{L^2(\R^+_x)} + \|f\|_{H^{\frac13}(\R^+_t)}\right).
\end{equation*}
\end{theorem}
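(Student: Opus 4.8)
The plan is to run the Hilbert Uniqueness Method, whose ingredients are already in place. Since system \eqref{eqnew21} is linear, I would realize the control as the critical point of the quadratic functional $\mathcal{J}$ introduced in \eqref{functional}, viewed on the Hilbert space $L^2(\R^+_x)$ of adjoint final data $\varphi_T$; here $\varphi$ always denotes the backward solution of \eqref{adjoin} with datum $\varphi_T$. Once a minimizer $\widehat{\varphi}_T$ is produced, Proposition \ref{proposition1} identifies $f(t)=\widehat{\partial_x^2\varphi}(0,t)$ as a control steering $\phi$ to $\phi_T$, because the vanishing of the first variation of $\mathcal{J}$ at $\widehat{\varphi}_T$ is exactly the optimality relation \eqref{controlcondition1} of Lemma \ref{lemma1}, evaluated at the target $\phi_T$. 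Thus the whole proof reduces to producing a minimizer by the direct method.

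To that end I would verify the three standard hypotheses. Continuity and finiteness of $\mathcal{J}$ amount to a second--order trace bound of the form $\|\partial_x^2\varphi(0,\cdot)\|_{H^{-1/3}(0,T)}\le C\|\varphi_T\|_{L^2(\R^+_x)}$, obtained by extending the hidden--regularity hierarchy \eqref{dependece}--\eqref{derivativetrace} one further derivative (each spatial derivative costing one third of a time derivative), which is furnished by Lemmas \ref{estimatesgroup} and \ref{estimateforcing}; the affine part is controlled by the space trace \eqref{dependece} together with Cauchy--Schwarz. Convexity is immediate, the leading term being the square of a Hilbertian seminorm and the rest affine in $\varphi_T$. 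The crucial property, coercivity, is precisely the content of the observability inequality \eqref{observabilitycase1} of Proposition \ref{proposition2}: it guarantees $\mathcal{J}(\varphi_T)\to+\infty$ as $\|\varphi_T\|_{L^2(\R^+_x)}\to\infty$. With these three facts, $\mathcal{J}$ attains its infimum at a minimizer $\widehat{\varphi}_T$, unique by strict convexity; the Euler--Lagrange equation yields the control, and the energy estimate $\|u\|_{C(\R^+_t;L^2(\R^+_x))}\le C(\|\phi\|_{L^2(\R^+_x)}+\|f\|_{H^{1/3}(\R^+_t)})$ follows from the forward well--posedness estimate of Theorem \ref{localwellposedness} applied to $u=\Gamma_r(\phi,f)$ with $s=0$.

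The genuine difficulty is not in this assembly but is already absorbed into Proposition \ref{proposition2}: its compactness--uniqueness argument reduces coercivity to a unique continuation statement for linear KdV solutions vanishing to second order at $x=0$, and that, together with the regularity gain of Theorem \ref{gainofregul} used to extract a convergent subsequence, is where the real work sits. Beyond invoking those results, the only point demanding care is the endpoint nature of $s=0$: one must confirm that the trace map $\varphi_T\mapsto\partial_x^2\varphi(0,\cdot)$ lands in $H^{-1/3}(0,T)$ exactly, so that the quadratic part of $\mathcal{J}$ is finite and the pairing in \eqref{controlcondition1} is well defined, rather than only in a strictly larger negative--order space.
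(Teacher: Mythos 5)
Your proposal takes essentially the same route as the paper: the paper likewise obtains a minimizer of the functional $\mathcal{J}$ in \eqref{functional} from its continuity, convexity, and the coercivity supplied by the observability inequality \eqref{observabilitycase1} of Proposition \ref{proposition2}, then invokes Lemma \ref{lemma1} and Proposition \ref{proposition1} to identify the control $f(t)=\widehat{\partial_x^2\varphi}(0,t)$ and Lemmas \ref{estimatesgroup} and \ref{estimateforcing} for the final estimate. Your extra attention to the endpoint trace bound $\partial_x^2\varphi(0,\cdot)\in H^{-1/3}(0,T)$ at $s=0$ is a detail the paper leaves implicit, but it does not alter the argument.
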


\subsection{Controllability: Nonlinear system}

Let $T > 0$, thanks to the Theorem \ref{controllinear}, we can define the bounded linear operator
\begin{equation*}
\begin{array}{c c c c}
\Lambda_r: & L^2(\R^+_x) \times L^2(\R^+_x)  & \longrightarrow & H^{\frac13}(\R^+_t) \\
& (\phi,\phi_T) & \longrightarrow & \Lambda_r(\phi,\phi_T)=f
\end{array}
\end{equation*}
where $f$ is the control defined in Proposition \ref{proposition1}. Now, we are in a position to prove one of the main results of the work.

\subsubsection{\bf Proof of Theorem \ref{main-a}}
We treat the nonlinear problem using a classical fixed-point argument. According to \eqref{formulasolutionright} the solution of \eqref{h1} can be written as
$$
u(x, t)=\theta(t) e^{-t (\partial_x+\partial_x^3)} \phi(x)-\frac{1}{2} \theta(t) \mathscr{D} \partial_{x} u^2(x, t)+\theta(t) \mathscr{L}_{+}^\lambda h(x, t),
$$ 
with
$$
h(t)=e^{-\pi i \lambda}\left[f(t)-\left.\theta(t) e^{-t(\partial_x+ \partial_x^3)} \phi\right|_{x=0}+\frac{1}{2} \theta(t) \mathscr{D} \partial_x u^2(0, t)\right] .
$$
Consider the map 
\begin{align*}
\Gamma(u):= \theta(t) e^{-t (\partial_x+\partial_x^3)} \phi(x)-\frac{1}{2} \theta(t) \mathscr{D} \partial_{x} u^2(x, t)+\theta(t) \mathscr{L}_{+}^\lambda \widehat{h}_T(x, t), 
\end{align*}
where 
\begin{equation*}
\widehat{h}_T(t)=e^{-\pi i \lambda}\left[\Lambda_r\left( \phi, \phi_T +\frac{1}{2} \theta(T) \mathscr{D} \partial_{x} u^2(x, T)  \right)(t)-\left.\theta(t) e^{-t(\partial_x+ \partial_x^3)} \phi\right|_{x=0}+\frac{1}{2} \theta(t) \mathscr{D} \partial_x u^2(0, t)\right] .
\end{equation*}
If we choose
\begin{align}\label{controlnonlinear}
f(t)= \Lambda_r\left( \phi, \phi_T +\frac{1}{2} \theta(T) \mathscr{D} \partial_{x} u^2(x, T)  \right)(t)
\end{align}
from Theorem \ref{controllinear}, we get
$$
\Gamma(u)|_{t=0}=\phi \quad \text{and} \quad \Gamma(u)|_{t=T}=\phi_T.
$$

The next steps are devoted to proving that the map $\Gamma$ is a contraction in an appropriate metric space, then its fixed point $u$ 
is the solution of \eqref{h1}, with $f$  defined by \eqref{controlnonlinear}. To prove the existence
of the fixed point, we apply the Banach fixed-point theorem to the restriction of $\Gamma$ on the closed ball
$$
B_\bold{r}=\left\lbrace u \in X_{0,b}\cap D_{\alpha}: \|u\|_{X_{0,b}\cap D_{\alpha}} \leq \bold{r}\right\rbrace, 
$$
for some $\bold{r}>0$.  In the sequel, $C$ denotes a generic positive constant; $C_0$, $C_1$, etc, and others positive (specific)
constants.
\begin{enumerate}
\item[(i)] {\bf $\Gamma$ maps $B_\bold{r}$ into itself.}
\end{enumerate}

Indeed,using Lemmas \ref{estimatesgroup}, \ref{estimateinhom} and \ref{estimateforcing}, there exists a constant $C > 0$, such that
\begin{equation*}
\begin{split}
\|\Gamma(u)\|_{X_{0,b}\cap D_{\alpha}} \leq &\left\| \theta(t) e^{-t (\partial_x+\partial_x^3)} \phi(x)\right\|_{X_{0,b}\cap D_{\alpha}}+\left\|\frac{1}{2} \theta(t) \mathscr{D} \partial_{x} u^2(x, t)\right\|_{X_{0,b}\cap D_{\alpha}} \\
&+\left\|\theta(t) \mathscr{L}_{+}^\lambda \widehat{h}_T(x, t)\right\|_{X_{0,b}\cap D_{\alpha}} \\
\leq & C\left( \left\|\theta  \right\|_{H^1(\R^+_t)}\left\| \phi \right\|_{L^2(\R^+_x)} + \left\| \partial_x u^2 \right\|_{X_{0,-b}} + \|\widehat{h}_T\|_{H^{\frac{1}{3}}(\R_t^+)}\right).
\end{split}
\end{equation*}
From \cite[Lemma 5.10]{Holmer},  it follows that 
$$
 \left\| \partial_x u^2 \right\|_{X_{0,-b}} \leq  C\left\| u \right\|_{X_{0,b}\cap D_{\alpha}}^2 \leq Cr^2.
$$
Thus, 
\begin{align*}
 \|\widehat{h}_T\|_{H^{\frac{1}{3}}(\R_t^+)} \leq& \left\| \Lambda_r\left( \phi, \phi_T +\frac{1}{2} \theta(T) \mathscr{D} \partial_{x} u^2(x, T)  \right)(t)\right\|_{H^{\frac{1}{3}}(\R_t^+)} \\
 &+\left\|\left.\theta(t) e^{-t(\partial_x+ \partial_x^3)} \phi\right|_{x=0}\right\|_{H^{\frac{1}{3}}(\R_t^+)} +\left\|\frac{1}{2} \theta(t) \mathscr{D} \partial_x u^2(0, t)\right\|_{H^{\frac{1}{3}}(\R_t^+)} \\
 \leq& \left\| \Lambda_r\right\| \left(  \left\| \phi \right\|_{L^2(\R^+_x)} + \left\| \phi_T +\frac{1}{2} \theta(T) \mathscr{D} \partial_{x} u^2(x, T) \right\|_{L^2(\R^+_x)}\right) \\
 &+ C\left\| \phi \right\|_{L^2(\R^+_x)} + 2C \left\| u \right\|_{X_{0,b}\cap D_{\alpha}}^2 \\
 \leq& \left\| \Lambda_r\right\| \left(  \left\| \phi \right\|_{L^2(\R^+_x)} + \left\| \phi_T \right\|_{L^2(\R^+_x)} \right) + C\left\| \phi \right\|_{L^2(\R^+_x)} + 2C\left( 1 + \left\| \Lambda_r\right\|\right) \left\| u \right\|_{X_{0,b}\cap D_{\alpha}}^2 \\
\leq& \left\| \Lambda_r\right\| \delta + C\delta +2C\left( 1 + \left\| \Lambda_r\right\|\right)\bold{r}^2 .
\end{align*}
Finally, we have that 
\begin{align*}
\|\Gamma(u)\|_{X_{0,b}\cap D_{\alpha}} &\leq C \left( \left\|\theta  \right\|_{H^1(\R^+_t)} + \left\| \Lambda_r\right\|+ C\right)\delta +
C^2\left(3+ \left\| \Lambda_r\right\|\right) \bold{r}^2 .
\end{align*}
To obtain (i), we take $\delta$ with
$$
\delta=\min\left\lbrace \frac{\bold{r}}{2C \left( \left\|\theta  \right\|_{H^1(\R^+_t)} + \left\| \Lambda_r\right\|+ C\right)}, \frac{1}{4C^3 \left( \left\|\theta  \right\|_{H^1(\R^+_t)} + \left\| \Lambda_r\right\|+ C\right)\left( 3 + \left\| \Lambda_r\right\|\right)} \right\rbrace.
$$
Hence, it follows that
$$
C \left( \left\|\theta  \right\|_{H^1(\R^+_t)} + \left\| \Lambda_r\right\|+ C\right)\delta +
C^2\left( 3 + \left\| \Lambda_r\right\|\right) \bold{r}^2  \leq \bold{r}.
$$

\begin{enumerate}
\item[(ii)] {\bf $\Gamma$ is a contraction.}
\end{enumerate}

Let $u, v  \in B_\bold{r}$, and consider 
\begin{align*}
\Gamma(u) - \Gamma(v)= -\frac{1}{2} \theta(t) \mathscr{D} \partial_{x} \left( u^2(x, t) - v^2(x,t)\right)+\theta(t) \mathscr{L}_{+}^\lambda \left( \widehat{h}_T^u(x, t)- \widehat{h}_T^v(x, t)\right). 
\end{align*}
Here,
$$
\widehat{h}_T^u(t)=e^{-\pi i \lambda}\left[\Lambda_r\left( \phi, \phi_T +\frac{1}{2} \theta(T) \mathscr{D} \partial_{x} u^2(x, T)  \right)(t)-\left.\theta(t) e^{-t(\partial_x+ \partial_x^3)} \phi\right|_{x=0}+\frac{1}{2} \theta(t) \mathscr{D} \partial_x u^2(0, t)\right] 
$$
and
$$
\widehat{h}_T^v(t)=e^{-\pi i \lambda}\left[\Lambda_r\left( \phi, \phi_T +\frac{1}{2} \theta(T) \mathscr{D} \partial_{x} v^2(x, T)  \right)(t)-\left.\theta(t) e^{-t(\partial_x+ \partial_x^3)} \phi\right|_{x=0}+\frac{1}{2} \theta(t) \mathscr{D} \partial_x v^2(0, t)\right].
$$
Note that 
\begin{equation*}
\widehat{h}_T^u(t)- \widehat{h}_T^v(t)=e^{-\pi i \lambda}\left[\Lambda_r\left( 0, \frac{1}{2} \theta(T) \mathscr{D} \partial_{x} \left( u^2(x, T)- v^2(x,T)\right)  \right)(t)+\frac{1}{2} \theta(t) \mathscr{D} \partial_x \left( u^2(0, t)-v^2(0,t)\right)\right].
\end{equation*}
Thus, we get
\begin{equation*}
\begin{split}
\left\| \Gamma(u) - \Gamma(v)  \right\|_{X_{0,b}\cap D_{\alpha}} \leq& \left\|\frac{1}{2} \theta(t) \mathscr{D} \partial_{x} \left( u^2(x, t) - v^2(x,t)\right)\right\|_{X_{0,b}\cap D_{\alpha}} \\&+\left\|\theta(t) \mathscr{L}_{+}^\lambda \left( \widehat{h}_T^u(x, t)- \widehat{h}_T^v(x, t)\right)\right\|_{X_{0,b}\cap D_{\alpha}} \\
\leq& C\left\| \partial_{x} \left( u+ v\right) \left( u- v\right)\right\|_{X_{s,-b}} +C\left\| \widehat{h}_T^u(t)- \widehat{h}_T^v(t)\right\|_{H^{\frac{1}{3}}(\R^+_t)} \\
\leq& 2C^2\left\| u+ v\right\|_{X_{0,b}\cap D_{\alpha}} \left\| u- v\right\|_{X_{0,b}\cap D_{\alpha}}  \\&+C^2\left(2+\left\|\Lambda_r \right\|\right)\left\| u+ v\right\|_{X_{0,b}\cap D_{\alpha}} \left\| u- v\right\|_{X_{0,b}\cap D_{\alpha}}
\\
\leq& 2C^2\left( 4+\left\|\Lambda_r \right\|\right)\bold{r} \left\| u- v\right\|_{X_{0,b}\cap D_{\alpha}},
\end{split}
\end{equation*}
and taking $\bold{r}$ such that $2C^2\left( 4+\left\|\Lambda_r \right\|\right)\bold{r}<1$, (ii) follows.

Therefore, the map $\Gamma$ is a contraction. Thus, from (i), (ii), and the Banach fixed-point theorem, $\Gamma$
has a fixed point in $B_\bold{r}$, and its fixed point is the desired solution. The proof of Theorem \ref{main-a} is, thus,
complete. \qed


\section{Exact Controllability: The left half-line}\label{sec4}

In this section, our primary focus is on achieving exact controllability for the system \eqref{h2} and establishing the validity of Theorem \ref{main-b}. Firstly, consider the homogeneous linear system 
\begin{equation*}
\begin{cases}\partial_t u+ \partial_x u+\partial_x^3 u=0, & \text { for }(x, t) \in(-\infty, 0) \times(0, T), \\ u(0, t)=\partial_x u(0, t)=0, & \text { for } t \in(0, T), \\ u(x, 0)=\phi(x),& \text { for } x \in(-\infty, 0),\end{cases}
\end{equation*}
whose adjoint associated system is given by 
\begin{equation}\label{adjointleft}
\begin{cases}
\partial_t\varphi+ \partial_x\varphi+\partial_x^3\varphi=0, & \text { for }(x, t) \in(-\infty,0) \times(0, T), \\ 
\varphi(0, t)=0,& \text { for } t \in(0, T), \\
 \varphi(x, T)=\varphi_T(x), & \text { for } x \in(-\infty,0). 
\end{cases}
\end{equation}
Remark that the change of variable $x = - x$ and $t=T-t$ reduces system  in
\begin{equation}\label{adjoint_1_left}
\begin{cases}
\partial_t\varphi+ \partial_x\varphi+\partial_x^3\varphi=0, & \text { for }(x, t) \in(0,\infty) \times(0, T),\\ 
\varphi(0, t)=0,& \text { for } t \in(0, T),\\
 \varphi(x, 0)=\varphi_0(x), & \text { for } x \in(0,\infty). 
\end{cases}
\end{equation}
The well-posedness of system \eqref{adjointleft} follows from Theorem \ref{localwellposedness}, when  $-\frac{3}{4}<s<\frac{3}{2}$ and $s \neq \frac{1}{2}$. Note that this system takes the form 
\begin{align*}
\begin{cases}
\varphi_t = B\varphi, \\
\varphi(T)=\varphi_T,
\end{cases}
\end{align*}
where the differential operator $B$ is given by 
\begin{align*}
\begin{cases}
B\varphi=-\partial_x\varphi - \partial_x^3 \varphi, \\
D(B):=\left\lbrace \varphi \in H^3(\R^-_x): \varphi(0)=0\right\rbrace = H^3(\R^-_x)\cap H^1_0(\R^-_x).
\end{cases}
\end{align*}
Similarly to Propositions \ref{propsolutioninL2} and \ref{conservationcase1}, we have the following result.
\begin{proposition}
Consider the initial data $\varphi_T \in L^2(\R^-_x)$. Then, there exists a unique mild solution $\varphi(t)=S(T-t)\varphi_T$ of \eqref{adjointleft} such that $\varphi \in C\left([0, T] ; L^2(\R^-_x)\right)$. Moreover, if $\varphi_T \in D(B)$, the system \eqref{adjointleft} has a unique (classical) solution $\varphi$ in the class
$$
\varphi \in C([0, T] ; D(B)) \cap C^1\left(0, T ; L^2(\R^-_x)\right)
$$
and satisfies 
\begin{equation*}
\|\varphi_x(0,\cdot)\|_{L^2(0,T)}\leq   \|\varphi_T\|_{L^2(\R^-_x)}.
\end{equation*}
\end{proposition}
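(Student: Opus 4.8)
The plan is to follow the template of Propositions \ref{propsemigruop} and \ref{propsolutioninL2}: first show that the operator $B$ generates a $C_0$-semigroup of contractions on the relevant $L^2$ space, then read off the existence and uniqueness of the mild and classical solutions from the general theory of evolution equations \cite[Cor. 4.4, Chapter 1]{pazy}, and finally prove the boundary trace bound by an energy (multiplier) argument. For the semigroup generation I would verify that the domain is dense in $L^2$ and that both $B$ and its adjoint $B^*$ are dissipative. Working on the equivalent forward problem \eqref{adjoint_1_left}, an integration by parts gives, for $\varphi$ in the domain (so that $\varphi(0)=0$ and $\varphi$ decays at infinity),
\begin{equation*}
\langle B\varphi, \varphi\rangle_{L^2} = -\tfrac{1}{2}\bigl(\partial_x\varphi(0)\bigr)^2 \leq 0,
\end{equation*}
since the convective term contributes the boundary value $\tfrac{1}{2}\varphi^2(0)=0$ and the third-order term leaves only $-\tfrac12(\partial_x\varphi(0))^2$. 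A parallel computation shows that $B^*$, whose domain encodes the two homogeneous conditions $\psi(0)=\partial_x\psi(0)=0$ dual to the single condition imposed here, is dissipative as well. By Lumer--Phillips this yields the contraction semigroup $(S(t))_{t\geq 0}$, and $\varphi(t)=S(T-t)\varphi_T$ is the unique mild solution; the higher regularity $\varphi\in C([0,T];D(B))\cap C^1(0,T;L^2)$ for $\varphi_T\in D(B)$ then follows exactly as in Proposition \ref{propsolutioninL2}.

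For the trace estimate I would again work on \eqref{adjoint_1_left}, obtained from \eqref{adjointleft} by the norm-preserving change of variables $x\mapsto -x$, $t\mapsto T-t$. Multiplying the equation by $\varphi$ and integrating over $(0,\infty)$, the convective term vanishes because $\varphi(0,t)=0$ and $\varphi$ decays at $+\infty$, while two integrations by parts in $\int_0^\infty \varphi\,\partial_x^3\varphi\,dx$ leave only the boundary contribution $\tfrac12(\partial_x\varphi(0,t))^2$, producing the energy identity
\begin{equation*}
\frac{1}{2}\frac{d}{dt}\|\varphi(t)\|_{L^2(\R^+_x)}^2 + \frac{1}{2}\bigl(\partial_x\varphi(0,t)\bigr)^2 = 0 .
\end{equation*}
Integrating in time from $0$ to $T$ and discarding the nonnegative term $\tfrac12\|\varphi(T)\|_{L^2}^2$ yields
\begin{equation*}
\int_0^T \bigl(\partial_x\varphi(0,t)\bigr)^2\,dt \leq \|\varphi_0\|_{L^2(\R^+_x)}^2 = \|\varphi_T\|_{L^2(\R^-_x)}^2,
\end{equation*}
which is precisely the claimed inequality once the change of variables is undone.

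The main obstacle is rigour rather than the formal computation. The energy identity is only justified for classical solutions, that is for data in $D(B)$, where the traces $\varphi(0,\cdot)$ and $\partial_x\varphi(0,\cdot)$ are genuinely defined and the $H^3$ decay guarantees that the boundary terms at infinity drop out. To reach an arbitrary $\varphi_T\in L^2(\R^-_x)$ one argues by density: the inequality above shows that the map $\varphi_T\mapsto \partial_x\varphi(0,\cdot)$ is bounded from $D(B)$ (with the $L^2$ norm) into $L^2(0,T)$, and since $D(B)$ is dense this trace map extends continuously to all of $L^2(\R^-_x)$, the extension coinciding with the distributional normal trace furnished by the boundary-forcing representation and the estimates of Lemmas \ref{estimatesgroup} and \ref{estimateforcing}. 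This continuous-extension step, together with the careful treatment of the boundary terms at infinity, is where the argument must be handled with care; every other assertion follows essentially verbatim from the semigroup theory already used in Propositions \ref{propsemigruop}--\ref{propsolutioninL2}.
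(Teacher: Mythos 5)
Your proposal is correct and follows essentially the same route the paper intends (it states this proposition with the remark ``Similarly as Propositions \ref{propsolutioninL2} and \ref{conservationcase1}'' and no written proof): generation of the contraction semigroup via dissipativity of $B$ and $B^*$ and \cite[Cor.~4.4, Ch.~1]{pazy}, mild/classical solutions from standard evolution-equation theory, and the trace bound from the energy identity $\tfrac{1}{2}\tfrac{d}{dt}\|\varphi(t)\|_{L^2}^2+\tfrac{1}{2}\bigl(\partial_x\varphi(0,t)\bigr)^2=0$ on the reflected problem \eqref{adjoint_1_left}, which yields the constant-one inequality exactly as claimed. Your sign computations for the boundary contributions and the dual boundary conditions $\psi(0)=\partial_x\psi(0)=0$ for $B^*$ are correct, and the density-extension remark, while beyond what the statement (which assumes $\varphi_T\in D(B)$) requires, is sound.
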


The following result reveals an improvement in the regularity of the solution to the linear system \eqref{adjoint_1_left}.
\begin{proposition}\label{kato_lin}
	Let $u$ be the solution of the linear problem \eqref{adjoint_1_left} given by 
	Theorem~\ref{holmer}. If $x^\alpha u_0\in L^2(0,\infty)$ for 
	$\alpha=2,3$, then
	\[
	\|x\,u\|_{L^2(0,T;H^1(0,\infty))}\leqslant c,
	\]
	where $c=c\big(T,\|u_0\|_{L^2(0,\infty)},\|x^\alpha u_0\|_{L^2(0,\infty)}\big)$ 
	is a positive constant. Moreover,
	\[
	\int_0^T\int_{x_0}^{x_0+1}u_x^2\,dx\,dt
	\leqslant c\big(T,\|u_0\|_{L^2(0,\infty)}\big)
	\]
	for any $x_0\in[0,\infty)$.
\end{proposition}

\begin{proof}
	The proof follows the same scheme as that of 
	Proposition~\ref{gainofregul} for the left half-line. We only highlight 
	the differences. Let $\psi_0\in\mathcal{C}^{\infty}(\mathbb{R})$ be 
	nondecreasing with $\psi_0(x)=0$ for $x\leqslant\tfrac12$ and 
	$\psi_0(x)=1$ for $x\geqslant 1$, and set 
	$\psi_\alpha(x)=x^\alpha\psi_0(x)$, so that $\psi_\alpha\geqslant 0$ 
	and $\psi_\alpha'\geqslant 0$ on $(0,\infty)$.
	
	Multiplying the equation in \eqref{adjoint_1_left} by 
	$u(x,t)\,\psi_\alpha(x-x_0)$ and integrating by parts over 
	$(0,\infty)$, the \emph{single} homogeneous condition $u(0,t)=0$ 
	suffices to annihilate all boundary contributions at $x=0$, in 
	contrast with the left half-line case, where the two conditions 
	$u(0,t)=\partial_x u(0,t)=0$ were both needed, while the 
	contributions at $x=+\infty$ vanish by decay. The same computations 
	that led to the differential identity in the proof of 
	Proposition~\ref{gainofregul}, now with $\psi_\alpha'\geqslant 0$, give
	\[
	\frac{1}{2}\frac{d}{dt}\int_0^{\infty}u^2\,\psi_\alpha(x-x_0)\,dx
	+\frac{3}{2}\int_0^{\infty}u_x^2\,\psi_\alpha'(x-x_0)\,dx
	=\frac{1}{2}\int_0^{\infty}u^2\,\big\{\psi_\alpha'(x-x_0)+\psi_\alpha'''(x-x_0)\big\}\,dx.
	\]
	Since $|\psi_\alpha'|+|\psi_\alpha'''|\leqslant C_\alpha(1+\psi_\alpha)$, 
	with $C_\alpha>0$ independent of $x_0$, and the basic energy estimate 
	yields $\|u(t)\|_{L^2(0,\infty)}\leqslant\|u_0\|_{L^2(0,\infty)}$, Gr\"onwall's inequality applied 
	to $E_\alpha(t):=\int_0^{\infty}u^2\,\psi_\alpha(x-x_0)\,dx$ gives
	\begin{equation}\label{kato_lin_gronwall}
		E_\alpha(t)\leqslant
		c_1\big(T,\|u_0\|_{L^2(0,\infty)},\|x^\alpha u_0\|_{L^2(0,\infty)}\big),
		\qquad\forall\,t\in[0,T],
	\end{equation}
	with $c_1$ independent of $x_0\geqslant 0$. The extraction of the 
	local smoothing estimate (with constant depending only on 
	$\|u_0\|_{L^2(0,\infty)}$, taking $\alpha=2$) and of the weighted 
	estimate (with $\alpha=3$ and base point $x_0=0$) proceeds
	as in the proof of Proposition~\ref{gainofregul}.
\end{proof}

\subsection{Linear control results: Neumann and Dirichlet cases}
We consider the linearized system associated with \eqref{h2}, with the presence of one control acting in the Neumann  boundary condition
\begin{equation}\label{linearonecontrolNeumann}
\begin{cases}\partial_t u+ \partial_x u+\partial_x^3 u=0, & \text { for }(x, t) \in(-\infty, 0) \times(0, T), \\ u(0, t)=0, \quad \ \partial_x u(0, t)=g_2(t), & \text { for } t \in(0, T), \\ u(x, 0)=\phi(x), & \text { for } x \in(-\infty, 0),\end{cases}
\end{equation}
and with one control acting in the Dirichlet  boundary condition
\begin{equation}\label{linearonecontroldirichlet}
\begin{cases}\partial_t u+ \partial_x u+\partial_x^3 u=0, & \text { for }(x, t) \in(-\infty, 0) \times(0, T), \\ u(0, t)=g_1(t),\quad \partial_x u(0, t)=0, & \text { for } t \in(0, T), \\ u(x, 0)=\phi(x), & \text { for } x \in(-\infty, 0).\end{cases}
\end{equation}

Now, we will first prove that the system \eqref{linearonecontrolNeumann} is exactly controllable. Here, we will follow the same steps as done in the previous section; the control result is achieved if an observability inequality is shown. The observability inequality is given in the next proposition. 
\begin{proposition}
For any $T>0$, there exists a constant $C(T, L)>0$, such that,
\begin{equation}\label{observabilityNeu}
\left\|\varphi_T\right\|_{L^2(\R^-_x)}^2 \leq  C \left\|\partial_x\varphi(0,\cdot)\right\|_{L^{2}(0, T)}^2,
\end{equation}
for any  $\varphi_T \in L^2(\R^-_x)$, where $\varphi(x,t)$ is the solution of the backward system \eqref{adjointleft}.
\end{proposition}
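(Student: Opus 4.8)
The plan is to argue by contradiction, following the compactness--uniqueness scheme already employed in the proof of Proposition \ref{proposition2}, the structural differences being that the adjoint system \eqref{adjointleft} now carries the single boundary condition $\varphi(0,t)=0$ and that the observed quantity is the first (rather than the second) derivative trace. After passing to the reduced system \eqref{adjoint_1_left} via $x\mapsto -x$, $t\mapsto T-t$, I would suppose that \eqref{observabilityNeu} fails. This produces a sequence of final data $\{\varphi_{n,T}\}$ with $\|\varphi_{n,T}\|_{L^2(\R^-_x)}=1$ and $\|\partial_x\varphi_n(0,\cdot)\|_{L^2(0,T)}\to 0$, where $\varphi_n$ solves \eqref{adjointleft}.

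First I would collect the a priori bounds. The space, time and derivative trace estimates for \eqref{adjoint_1_left} (the analogues of \eqref{dependece}--\eqref{derivativetrace}, coming from Lemmas \ref{estimatesgroup} and \ref{estimateforcing}) together with the weighted smoothing estimate analogous to Theorem \ref{gainofregul} give that $\{\varphi_n\}$ is bounded in $L^2(0,T;H^1_{loc})$ and, through a bound of the type $\|x\varphi_n\|_{L^2(0,T;H^1)}\le c$, that its $L^2$ tails at $x\to-\infty$ are uniformly small. Using the equation to bound $\{\partial_t\varphi_n\}$ in $L^2(0,T;H^{-2}_{loc})$ and the compact chain $H^1_{loc}\hookrightarrow_{cc} L^2_{loc}\hookrightarrow H^{-2}_{loc}$, an Aubin--Lions argument extracts a subsequence converging in $L^2(0,T;L^2_{loc})$; the uniform tail control then upgrades this to strong convergence in $L^2(0,T;L^2(\R^-_x))$, with limit $\varphi$. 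The time-trace bound places $\{\varphi_n(0,\cdot)\}$ in a compact subset of $L^2(0,T)$, so $\varphi(0,\cdot)=0$, while the derivative-trace bound combined with $\|\partial_x\varphi_n(0,\cdot)\|_{L^2(0,T)}\to 0$ forces $\partial_x\varphi(0,\cdot)=0$.

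To rule out a trivial limit I would exploit the energy identity for \eqref{adjointleft}: multiplying by $\varphi$ and integrating over $(-\infty,0)$ yields $\tfrac{d}{dt}\|\varphi(t)\|_{L^2(\R^-_x)}^2=(\partial_x\varphi(0,t))^2$, whence for each $n$ one has $\|\varphi_n(t)\|_{L^2(\R^-_x)}^2=1-\int_t^T(\partial_x\varphi_n(0,s))^2\,ds=1-o(1)$ uniformly in $t$. Passing to the limit gives $\|\varphi(t)\|_{L^2(\R^-_x)}\equiv 1$; in particular $\varphi$ is a nontrivial solution of \eqref{adjointleft} subject to the overdetermined boundary conditions $\varphi(0,t)=\partial_x\varphi(0,t)=0$.

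The crux, and the step I expect to be the main obstacle, is the unique continuation property: I must show that a solution of the linear KdV on $(-\infty,0)\times(0,T)$ with $\varphi(0,t)=\partial_x\varphi(0,t)=0$ vanishes identically. Note that, unlike the right half-line case, here only two boundary traces are available, so the naive extension of $\varphi$ by zero to $x>0$ does \emph{not} solve the homogeneous equation (the jump of $\partial_x^2\varphi$ at $x=0$ produces a residual point source), and a genuine continuation argument is required. I would reduce, as in the bounded-interval theory, to the stationary problem $\psi'''+\psi'+\lambda\psi=0$ on $(-\infty,0)$ with $\psi(0)=\psi'(0)=0$ and $\psi\in L^2(\R^-)$; since the roots of $r^3+r+\lambda=0$ sum to zero, at most two of them have positive real part, and imposing the two boundary conditions on the corresponding (at most two-dimensional) space of modes decaying at $-\infty$ forces $\psi\equiv 0$ for every $\lambda$. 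Thus, in contrast with the finite interval, there is no critical-length obstruction on the half-line, the unique continuation holds, and this contradicts $\|\varphi(t)\|_{L^2(\R^-_x)}\equiv 1$, establishing \eqref{observabilityNeu}.
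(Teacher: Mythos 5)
Your proposal is correct in outline and follows the same compactness--uniqueness scheme as the paper, but it settles the decisive unique-continuation step by a genuinely different argument. Where you reduce (as the paper does, via the Rosier-style eigenvalue argument) to the stationary problem $\psi'''+\psi'+\lambda\psi=0$ on $\R^-_x$ with $\psi(0)=\psi'(0)=0$ and then finish by root counting --- the roots of $r^3+r+\lambda=0$ sum to zero, so at most two modes $e^{rx}$ with $\operatorname{Re}r>0$ decay at $-\infty$, and the trace map $\psi\mapsto(\psi(0),\psi'(0))$ is injective on that at-most-two-dimensional space by a Vandermonde/Wronskian computation --- the paper's Lemma \ref{lemaespectralcaso1} instead takes the half-line Fourier transform, arrives at $\widehat{\varphi}(\xi)=\varphi''(0)/(\lambda-i\xi-i\xi^3)$ as in \eqref{rrrr}, and invokes Paley--Wiener: a nontrivial solution would force $\widehat{\varphi}$ to be entire, which the explicit poles forbid. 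The two arguments are equivalent in substance (and your remark that the zero extension across $x=0$ leaves a residual point source proportional to $[\partial_x^2\varphi](0,t)\,\delta_0$, so a genuine continuation argument is unavoidable, is exactly right); yours is more elementary, while the paper's Fourier version is uniform with its treatment of the Dirichlet case and displays on the Fourier side why no critical-length obstruction can occur. Your nontriviality step is also a cosmetic variant: you integrate $\frac{d}{dt}\|\varphi(t)\|_{L^2(\R^-_x)}^2=(\partial_x\varphi(0,t))^2$, whereas the paper multiplies by $t\varphi$ to obtain $T\|\varphi_{n,T}\|^2=\int_0^T\|\varphi_n(t)\|^2\,dt+\int_0^T\frac{t}{2}|\varphi_{x,n}(0,t)|^2\,dt$ and runs a Cauchy-sequence argument; both yield $\|\varphi_T\|_{L^2(\R^-_x)}=1$.

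One caveat on your tail-control step: you pass from convergence in $L^2(0,T;L^2_{loc})$ to strong convergence in $L^2(0,T;L^2(\R^-_x))$ using the weighted bound $\|x\varphi_n\|_{L^2(0,T;H^1)}\leq c$, but Theorem \ref{gainofregul} requires $\chi^\alpha u_0\in L^2$ for $\alpha=2,3$, a decay hypothesis that your arbitrary unit-norm final data do not satisfy; only the local estimate $\int_0^T\int_{x_0}^{x_0+1}(\partial_x u)^2\,dx\,dt\leq c\bigl(T,\|u_0\|_{L^2}\bigr)$ is available at this generality, and it suffices for the $H^1_{loc}$ bound but gives no uniform tails. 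The paper's device is to apply the $t\varphi$ identity to the differences $\varphi_n-\varphi_m$, so that the boundary contribution tends to zero and one argues that $\{\varphi_{n,T}\}$ is Cauchy in $L^2(\R^-_x)$ directly, without invoking weighted decay of the data; you should substitute that (or otherwise justify the tail smallness) in place of the appeal to the weighted estimate. This is a repairable lapse in one a priori bound, not a structural flaw in your argument.
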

\begin{proof}
Let us argue by contradiction. Supposing that the observability inequality \eqref{observabilityNeu} does not hold, it follows that there exists a sequence $\{\varphi_{n,T}\}_{n \in \N}$, such that
\begin{align}\label{e3_1}
1=&\|\varphi_{n,T}\|_{L^2(\R^-_x)}^2\geq n\ \left \|\partial_x\varphi_{n}(0,\cdot)\right \|_{L^{2}(0,T)}^2 
\end{align}
where,  for each $n\in\N$, $\{ \varphi_{n}\}_{n\in\N}$ is the solution of  \eqref{adjointleft}. Inequality \eqref{e3_1} imply that
$$
\partial_x\varphi_{n}(0,\cdot) \rightarrow 0 \quad \text{in} \quad  L^{2}(0,T).
$$
Moreover, it is important to note that from Lemmas \ref{estimatesgroup} and \ref{estimateforcing} together with the formula solution \eqref{formulasolutionright}, we obtain the estimates \eqref{dependece}, \eqref{timetraces}, and \eqref{derivativetrace} for the adjoint system \eqref{adjointleft}. Furthermore, from Proposition \ref{kato_lin} and \eqref{e3_1}, we obtain a sequence $\{\varphi_n\}_{n\in\N}$ bounded in $L^2(0,T;H^1_{loc}(\R^-_x))$. On the other hand, the adjoint system implies that $\{\partial_t\varphi_{n}\}_{n\in\N}$,  is bounded in $L^2(0,T;H^{-2}_{loc}(\R^-_x))$, and the compact embedding
$$
H^1_{loc}(\R^-_x) \hookrightarrow_{cc} L^2_{loc}(\R^-_x) \hookrightarrow H^{-2}_{loc}(\R^-_x),
$$
 allows us to  conclude that $\{ \varphi_n\}_{n \in \N}$ is relatively compact in $L^2(0,T; L^2_{loc}(\R^-_x))$ and consequently, we obtain a subsequence, still denoted by the same index $n$, satisfying
$$
\varphi_n \rightarrow \varphi \mbox{ in } L^2(0,T; L^2_{loc}(0,\infty)), \mbox{ as } n\rightarrow\infty.
$$
 
Furthermore, the trace estimate of the system \eqref{adjointleft} implies that $\{ \varphi_n(0,\cdot)\}_{n\in\N}$ is bounded in $H^{\frac13}(0,T)$. Then, the embedding $H^{\frac13}(0,T) \hookrightarrow_{cc} L^2(0,T),$ guarantees that the above sequences are relatively compact in $L^2(0,T)$. Thus, we obtain a subsequence, still denoted by the same index $n$, satisfying
\begin{equation*}
\varphi_n(0,\cdot) \rightarrow \varphi(0,\cdot), \quad \text{in} \quad L^2(0,T).
\end{equation*}
From the boundary condition of the adjoint system, we deduce that $\varphi(0,\cdot)=0.$ Additionally, multiplying the first equation of \eqref{adjointleft} by $t\varphi$ and integrating by parts in $(0,T)\times (-\infty,0)$, we deduce that 
\begin{equation*}
T\|\varphi_{n,T}\|_{L^2(\R^-_x)}^2= \int_0^T \|\varphi_{n}(t)\|_{L^2(\R^-_x)}^2dt +  \int_0^T t|\varphi_{x,n}(0,t)|^2dt.
\end{equation*}
So, if follows that  $\{\varphi_{n,T}\}_{n\in\N}$ is a Cauchy sequence in $L^2(0,\infty)$. Thus,
\begin{equation}\label{e16_1}
\varphi_{n,T} \rightarrow \varphi_{T} \mbox{ in } L^2(\R^-_x), \mbox{ as } n\rightarrow\infty,
\end{equation}
which implies that 
\begin{equation}\label{e12_1}
 \|\varphi_{T}\|_{L^2(\R^-_x)}=1.
\end{equation}
On the other hand, by using the derivative traces estimation of the system \eqref{adjointleft}, \eqref{e16_1}, and \eqref{e3_1}, we see that
\begin{equation*}
 \partial_x \varphi_{n}(0,\cdot) \rightarrow \partial_x\varphi(0,\cdot)   \quad \mbox{ in } L^2(0,T), \mbox{ as } n\rightarrow\infty, 
\end{equation*}
and
\begin{equation*}
 \partial_x^2\varphi_{n}(0,\cdot) \rightarrow \partial_x^2\varphi(0,\cdot)\quad  \mbox{ in } H^{-\frac13}(0,T), \mbox{ as } n\rightarrow\infty.
 \end{equation*}
Finally, taking $n\to\infty$, from above converges, we obtain that $\varphi$ is solution of the system
$$
\begin{cases}
\partial_t\varphi+ \partial_x\varphi+\partial_x^3\varphi=0, & \text { for }(x, t) \in(-\infty,0) \times(0, T),\\ 
\varphi(0, t)=\partial_x\varphi(0,t)=0,& \text { for } t \in(0, T), \\
\varphi(x, T)=\varphi_T(x), & \text { for } x \in(-\infty,0),
\end{cases}
$$
or equivalently, 
\begin{equation*}
\begin{cases}
\partial_t\varphi+ \partial_x\varphi+\partial_x^3\varphi=0, & \text { for }(x, t) \in(0,\infty) \times(0, T),\\ 
\varphi(0, t)=\partial_x\varphi(0,t)=0,& \text { for } t \in(0, T),\\
\varphi(x, 0)=\varphi_0(x), & \text { for } x \in(0,\infty).
\end{cases}
\end{equation*} 
Notice that \eqref{e12_1} implies that the solutions can not be identically zero. However, from the following Lemma, one can conclude that $\varphi=0$, which leads us to a contradiction with \eqref{e12_1}. 
\end{proof}

\begin{lemma}\label{lemaespectralcaso1}
For any $T>0$, let $N_T$ denote the space of the initial states $\varphi_0 \in L^2(\R^-_x)$, such that the solution of \eqref{adjoint_1_left} satisfies $\partial_x\varphi(0,\cdot)=0$. Then, $N_T=\{0\}$.
\end{lemma}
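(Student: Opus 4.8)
The plan is to argue by contradiction using Rosier's compactness–uniqueness scheme, reducing the unique continuation encoded in $N_T=\{0\}$ to a constant–coefficient spectral ODE on the half-line. Write $\mathcal{B}\varphi=-\partial_x\varphi-\partial_x^3\varphi$ for the generator of \eqref{adjoint_1_left}, with the single Dirichlet condition $\varphi(0)=0$ built into its domain. The first thing I would establish is that $N_T$ is invariant under $\mathcal{B}$: if $\varphi_0\in N_T\cap D(\mathcal{B})$, then $w:=\partial_t\varphi$ solves the same system \eqref{adjoint_1_left} with data $\mathcal{B}\varphi_0$, and its Neumann trace is $\partial_x w(0,t)=\partial_t\bigl(\partial_x\varphi(0,t)\bigr)=0$ on $(0,T)$, so $\mathcal{B}\varphi_0\in N_T$. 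Bootstrapping this yields $N_T\subset D(\mathcal{B}^\infty)$ and $\mathcal{B}(N_T)\subset N_T$, so that every element of $N_T$ is a smooth, rapidly controlled function.

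Next I would show that $N_T$ is finite dimensional. For $\varphi_0\in N_T$ the homogeneous data $\varphi(0,\cdot)=\partial_x\varphi(0,\cdot)=0$ place us exactly in the hypotheses of Theorem \ref{gainofregul}, which provides the local gain of one derivative together with the weighted bound $\|x\varphi\|_{L^2(0,T;H^1)}\le c$; combined with the conservation $\|\varphi(t)\|_{L^2}=\|\varphi_0\|_{L^2}$ (obtained from the energy identity once $\partial_x\varphi(0,\cdot)=0$), this shows that the unit ball of $N_T$ is bounded in a space that embeds \emph{compactly} into $L^2$, the weight $x$ being precisely what restores compactness at infinity. By Riesz's theorem a normed space with precompact unit ball is finite dimensional, so $\dim N_T<\infty$.

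Assuming $N_T\neq\{0\}$, complexify and note that $\mathcal{B}|_{N_T}$ is then an endomorphism of a nontrivial finite-dimensional space, hence admits an eigenvalue $\lambda\in\mathbb{C}$ with eigenfunction $\psi\in N_T$, $\psi\neq0$. Since the corresponding solution is $e^{\lambda t}\psi$, its Neumann trace $e^{\lambda t}\psi'(0)$ vanishes, forcing $\psi'(0)=0$, while $\psi(0)=0$ comes from the domain of $\mathcal{B}$; thus
\begin{equation*}
\psi'''+\psi'+\lambda\psi=0,\qquad \psi(0)=\psi'(0)=0,\qquad \psi\in L^2(0,\infty).
\end{equation*}
The characteristic polynomial $r^3+r+\lambda$ has no quadratic term, so its roots satisfy $r_1+r_2+r_3=0$; their real parts sum to zero, whence \emph{at most two} have negative real part. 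As $\psi\in L^2(0,\infty)$, it must be a combination of those at most two decaying modes $e^{r_jx}$ (modes with $\operatorname{Re}r_j\ge0$ are not square integrable, including purely imaginary ones), and imposing the two homogeneous conditions $\psi(0)=\psi'(0)=0$ on this at-most-two-dimensional space—whose coefficient matrix is of Vandermonde type, with the obvious $xe^{rx}$ modification in case of a double root—forces all coefficients to vanish, i.e. $\psi\equiv0$, contradicting $\psi\neq0$. Hence $N_T=\{0\}$.

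I expect the main obstacle to be the finite-dimensionality in the second step. On the bounded interval this is immediate from a compact resolvent, but here the resolvent of $\mathcal{B}$ has continuous spectrum on the imaginary axis and is \emph{not} compact; the needed compactness of the unit ball of $N_T$ has to be extracted from the weighted smoothing of Theorem \ref{gainofregul}, which is exactly what controls the tail of the solution as $x\to\infty$. Once that is secured, the remaining ingredients—the invariance argument and the elementary root count—are routine, and it is worth emphasizing that the decisive feature of the count is the absence of the $\partial_x^2$ term in the equation, which guarantees at most two decaying modes against the two homogeneous boundary conditions; this is precisely the mechanism that precludes any critical-set phenomenon on the half-line.
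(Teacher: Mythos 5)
Your elimination of the spectral problem is correct and takes a genuinely different route from the paper. Where you count characteristic roots, the paper instead takes the half-line Fourier transform of \eqref{spectralcase1}, arrives at the explicit formula \eqref{rrrr}, $\widehat{\varphi}(\xi)=\varphi''(0)/(\lambda-i\xi-i\xi^3)$, and concludes via a Paley--Wiener-type argument that $\widehat{\varphi}$ cannot be entire. The two mechanisms are the same in substance: the absence of the quadratic term forces the three roots (poles) to sum to zero, so at most two modes decay (at least one pole sits in the forbidden half-plane), against two homogeneous boundary conditions. Your version is arguably cleaner, since the Paley--Wiener criteria the paper quotes (entirety plus a bound of type $e^{L|\operatorname{Im}\xi|}$) are really the compact-support conditions from the bounded-interval setting and do not transfer verbatim to the half-line; your direct ODE analysis, including the observation that purely imaginary roots give non-$L^2$ modes and the double-root case is harmless, avoids that imprecision. (Mind only the orientation: the paper's problem is posed on $\mathbb{R}^-$ with $\lambda\varphi+\varphi'+\varphi'''=0$, which under $x\mapsto-x$ becomes your equation with $\lambda\mapsto-\lambda$; this is immaterial since $\lambda$ ranges over $\mathbb{C}$.)

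The genuine gap is exactly where you predicted it, and your proposed fix does not close it. Theorem \ref{gainofregul} yields the weighted bound $\|x\varphi\|_{L^2(0,T;H^1)}\leq c$ \emph{only under the additional hypothesis} $\chi^\alpha\varphi_0\in L^2$ for $\alpha=2,3$; a generic element of $N_T$ is merely $L^2$, so you are not entitled to this estimate, and the claim that the unit ball of $N_T$ is bounded in a space compactly embedded in $L^2$ fails. The unconditional part of Theorem \ref{gainofregul} (local smoothing, uniform in $x_0$) gives compactness only in $L^2_{\mathrm{loc}}$, with no tightness at infinity, which is insufficient for Riesz's theorem and hence for $\dim N_T<\infty$; without finite-dimensionality your eigenvalue extraction for $\mathcal{B}|_{N_T}$ has no basis. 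To be fair, the paper does not prove this step either: its proof of Lemma \ref{lemaespectralcaso1} simply asserts that ``the same arguments as those given in \cite{Rosier}'' produce an eigenvalue, importing a compactness argument that is carried out there on a bounded interval, where the compact embedding is automatic. So your attempt correctly isolates the one point at which the half-line geometry resists the standard compactness--uniqueness scheme, but as written the finite-dimensionality of $N_T$ remains unjustified in your proposal, just as it is left implicit in the paper. Your invariance step ($w=\partial_t\varphi$ solves \eqref{adjoint_1_left} with vanishing Neumann trace, so $\mathcal{B}(N_T\cap D(\mathcal{B}))\subset N_T$) and the conservation identity (the energy method gives $\frac{d}{dt}\frac{1}{2}\|\varphi\|^2=-\frac{1}{2}|\partial_x\varphi(0,t)|^2=0$ on $N_T$) are both fine.
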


\begin{proof}
The proof uses the same arguments as those given in \cite{Rosier}, that is if $N_T \neq\{0\}$, the map $\varphi_T \in N_T \rightarrow B\left(\varphi_T\right) \in \mathbb{C} N_T$ (where $\mathbb{C} N_T$ denote the complexification of $N_T$) has (at least) one eigenvalue. Hence, there exist $\lambda \in \mathbb{C}$ and $\varphi_0 \in H^3(\R^-_x) \backslash\{0\}$, such that
\begin{equation}\label{spectralcase1}
\begin{cases}
\lambda\varphi_T+\varphi_T^{\prime}+\varphi_T^{\prime  \prime \prime}=0,& \text{in $ \R^-_x$},\\
\varphi_T(0)=\varphi_T^{\prime}(0)=0.
\end{cases}
\end{equation}
To conclude the proof of the Lemma \ref{lemaespectralcaso1}, we prove that this does not hold. To simplify the notation, henceforth, we denote $\varphi_T:=\varphi$.  Consider
$$
\begin{cases}
\lambda\varphi+\varphi^{\prime}+\varphi^{\prime  \prime \prime}=0, \quad \, \text{in $ (-\infty,0)$},\\
\varphi(0)=\varphi^{\prime}(0)=0,
\end{cases}
$$
with $\varphi\neq 0$. Note that if $\varphi^{\prime\prime}(0)=0$, then $\varphi \equiv 0$. Otherwise,  we use an argument similar to the one used in  \cite[Lemma 3.5]{Rosier}. Let us introduce the notation $\hat{\varphi}(\xi)=\int_{-\infty}^0  \mathrm{e}^{-i x \xi} \varphi(x) \mathrm{d} x. $ Note that the above representation has the following properties:
\begin{align*}
&\widehat{\varphi^{\prime}}(\xi)= \int_{-\infty}^0 \mathrm{e}^{-i x \xi} \varphi^{\prime}(x) \mathrm{d} x =  i\xi \widehat{\psi} + [e^{-ix \xi}\varphi]_{x=-\infty}^{x=0}= i\xi \widehat{\varphi} + \varphi(0)=i\xi \widehat{\varphi},
\end{align*}
and
\begin{align*}
\widehat{\varphi^{\prime\prime\prime}}(\xi) &= \int_{-\infty}^{0} \mathrm{e}^{-i x \xi} \varphi^{\prime\prime\prime}(x) \mathrm{d} x = i \xi^3 \widehat{\varphi} + [-\xi^2e^{-ix \xi}\varphi+i\xi e^{-ix \xi}\varphi^{\prime}+e^{-ix \xi}\varphi^{\prime\prime}]_{x=-\infty}^{x=0} \\
&= - i \xi^3 \widehat{\varphi} -\xi^2\varphi(0)+i\xi \varphi^{\prime}(0)+\varphi^{\prime\prime}(0)= - i \xi^3 \widehat{\varphi} +\varphi^{\prime\prime}(0).
\end{align*}
 Then, multiplying the equation in \eqref{spectralcase1} by $\mathrm{e}^{-i x \xi}$ and integrating by part in $(-\infty,0)$ yields
 \begin{equation}\label{rrrr}
\widehat{\varphi}(\xi)=  \dfrac{\varphi^{\prime\prime}(0)}{\lambda -i\xi- i\xi^3}.
 \end{equation}
Using Paley-Wiener theorem (see, for instance, \cite[Section 4, p. 161]{yosida}) and the usual characterization of $H^3(-\infty,0)$ functions using their Fourier transforms, we see that a nontrivial solution of \eqref{spectralcase1} is equivalent to the existence of $\lambda \in \mathbb{C}$, such that
 \begin{enumerate}
 \item[(i)] $\widehat{\varphi}$ is a entire functions in $\mathbb{C}$,
 \item[(ii)]  $\int_{\mathbb{R}}|\widehat{\varphi}(\xi)|^2\left(1+|\xi|^2\right)^2 \mathrm{~d} \xi<\infty$, 
 \item[(iii)] $\forall \xi \in \mathbb{C}$, we have that $\widehat{\varphi}|(\xi)| \leq c_1(1+|\xi|)^k \mathrm{e}^{L|\operatorname{Im} \xi|}$, for some positive constant $c_1$.
 \end{enumerate}
Notice that if (i) holds, then (ii) and (iii) are satisfied; however, since $\widehat{\varphi}$ is given by \eqref{rrrr}, it can not be an entire function. 
\end{proof}
As usual in control theory (see the previous section), with the previous observability inequality in hand, the following controllability result for the linearized system holds. 
\begin{theorem}\label{controllinearonecontrolneuman}
Let $T>0$, $\phi, \phi_T \in L^2(\R^-_x)$. Then, there exist $g_2 \in L^{2}(\R^+_t)$ such that the distributional solution $u$ of \eqref{linearonecontrolNeumann}
satisfies that $u(x,T)=\phi_T(x)$ for $x \in \R^-_x$. Moreover, the following estimates hold
\begin{equation*}
\left\| u   \right\|_{C(\R^+_t, L^2(\R^-_x))} \leq C \left( \|\phi\|_{L^2(\R^-_x)} + \|g_2\|_{L^{2}(\R^+_t)}\right).
\end{equation*}
\end{theorem}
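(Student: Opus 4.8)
The plan is to mirror the HUM/duality argument of Section~\ref{sec3}, replacing the Dirichlet trace by the Neumann control $g_2=\partial_x u(0,\cdot)$ and using the observability inequality \eqref{observabilityNeu} in place of \eqref{observabilitycase1}. First I would derive the optimality condition that characterizes admissible controls. Multiplying the first equation of \eqref{linearonecontrolNeumann} by the solution $\varphi$ of the backward adjoint system \eqref{adjointleft} and integrating by parts over $(-\infty,0)\times(0,T)$, the boundary conditions $u(0,t)=0$, $\partial_x u(0,t)=g_2(t)$ and $\varphi(0,t)=0$ force all boundary contributions of order zero and two to vanish, leaving only the product of $g_2$ with the first derivative trace of $\varphi$. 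This yields the analogue of Lemma~\ref{lemma1}, namely
\begin{equation*}
\left\langle g_2(\cdot),\,\partial_x\varphi(0,\cdot)\right\rangle_{L^2(0,T)}=\int_{-\infty}^0 \phi\,\varphi(0)\,dx-\int_{-\infty}^0 u(T)\,\varphi_T\,dx,
\end{equation*}
valid for every $\varphi_T\in L^2(\R^-_x)$.

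Next I would introduce the functional $\mathcal{J}\colon L^2(\R^-_x)\to\R$ patterned on \eqref{functional},
\begin{equation*}
\mathcal{J}(\varphi_T)=\frac12\left\|\partial_x\varphi(0,\cdot)\right\|_{L^2(0,T)}^2+\int_{-\infty}^0 u(T)\,\varphi_T\,dx-\int_{-\infty}^0 \phi\,\varphi(0)\,dx,
\end{equation*}
where $\varphi$ is the solution of \eqref{adjointleft} with terminal data $\varphi_T$. This functional is well defined because the derivative time-trace estimate \eqref{derivativetrace} guarantees $\partial_x\varphi(0,\cdot)\in L^2(0,T)$ while the space-trace bound \eqref{dependece} controls the two linear terms; it is continuous and strictly convex by its quadratic structure, and, crucially, it is coercive as an immediate consequence of \eqref{observabilityNeu}, since that inequality gives $\tfrac12\|\partial_x\varphi(0,\cdot)\|_{L^2(0,T)}^2\ge(2C)^{-1}\|\varphi_T\|_{L^2(\R^-_x)}^2$ while the linear terms grow at most linearly in $\|\varphi_T\|_{L^2(\R^-_x)}$. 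Hence the direct method of the calculus of variations produces a unique minimizer $\widehat{\varphi}_T$; writing $\widehat{\varphi}$ for the corresponding adjoint solution, the Euler--Lagrange equation $\mathcal{J}'(\widehat{\varphi}_T)=0$ is exactly the optimality condition above, so that $g_2(t)=\partial_x\widehat{\varphi}(0,t)$ is an admissible control driving $\phi$ to $\phi_T$ at time $T$, precisely as in Propositions~\ref{proposition1} and \ref{proposition2}.

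Finally, the stability estimate follows by combining two bounds: coercivity of $\mathcal{J}$ yields $\|g_2\|_{L^2(\R^+_t)}\lesssim\|\phi\|_{L^2(\R^-_x)}+\|\phi_T\|_{L^2(\R^-_x)}$ for the constructed control, and the well-posedness estimates of Lemmas~\ref{estimatesgroup} and \ref{estimateforcing} applied to the solution representation \eqref{formulasolutionleft} give $\|u\|_{C(\R^+_t,L^2(\R^-_x))}\lesssim\|\phi\|_{L^2(\R^-_x)}+\|g_2\|_{L^2(\R^+_t)}$. The only genuinely delicate point is the boundary bookkeeping in the opening integration by parts: because the left half-line problem carries both a Dirichlet and a Neumann datum, one must verify that under $u(0,t)=\varphi(0,t)=0$ every boundary term except $\int_0^T g_2\,\partial_x\varphi(0,t)\,dt$ cancels. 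Once this identity is secured, the remainder is a verbatim transcription of the right half-line proof, with the already-established observability inequality \eqref{observabilityNeu} supplying the coercivity.
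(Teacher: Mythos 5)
Your proposal is correct and follows essentially the same route as the paper, which likewise deduces Theorem \ref{controllinearonecontrolneuman} from the observability inequality \eqref{observabilityNeu} by repeating the HUM machinery of Section \ref{sec3} (the duality identity of Lemma \ref{lemma1}, the quadratic functional \eqref{functional}, and the minimizer-gives-control argument of Proposition \ref{proposition1}, with the final bound from Lemmas \ref{estimatesgroup} and \ref{estimateforcing}). Your integration-by-parts bookkeeping, choice of control $g_2(t)=\partial_x\widehat{\varphi}(0,t)$, and coercivity argument match the paper's (largely implicit) proof, so no gap remains.
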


\begin{remark}Note that similar results can be obtained for the system \eqref{linearonecontroldirichlet}.  Indeed, we have the following observability inequality for the solution of the adjoint system associated with the system \eqref{linearonecontroldirichlet}: 
$$
\left\|\varphi_T\right\|_{L^2(\R^-_x)}^2 \leq  C \left\|\partial_x^2\varphi(0,\cdot)\right\|_{H^{-\frac{1}{3}}(0, T)}^2,
$$
for any  $\varphi_T \in L^2(\R^-_x)$, where $\varphi$ is the solution of the backward system \eqref{adjointleft}. So, with this in hand, the following holds:
\begin{theorem}\label{controllinearonecontroldirichelt}
Let $T>0$, $\phi, \phi_T \in L^2(\R^-_x)$. Then, there exist $g_1 \in H^{\frac{1}{3}}(\R^+_t)$ such that the distributional solution $u$ of \eqref{linearonecontroldirichlet} satisfies that $u(x,T)=\phi_T(x)$ for $x \in \R^-_x$. Moreover, the following estimates hold
\begin{equation*}
\left\| u   \right\|_{C(\R^+_t, L^2(\R^-_x))} \leq C \left( \|\phi\|_{L^2(\R^-_x)} + \|g_1\|_{H^{\frac{1}{3}}(\R^+_t)}\right).
\end{equation*}
\end{theorem}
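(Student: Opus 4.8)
The plan is to reproduce, step by step, the Hilbert Uniqueness scheme already used for the right half-line in Proposition \ref{proposition2} and Theorem \ref{controllinear}, and for the Neumann case above; the only structural change is the boundary trace against which the Dirichlet control $g_1$ is paired. First I would record the duality identity: multiplying the equation in \eqref{linearonecontroldirichlet} by the solution $\varphi$ of the backward adjoint system \eqref{adjointleft} and integrating by parts over $(-\infty,0)\times(0,T)$, the boundary terms produced by $\partial_x$ and $\partial_x^3$ all vanish thanks to $\varphi(0,t)=0$ and $\partial_x u(0,t)=0$, except for the single contribution $u(0,t)\,\partial_x^2\varphi(0,t)=g_1(t)\,\partial_x^2\varphi(0,t)$. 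This gives the analogue of Lemma \ref{lemma1}: exact controllability is equivalent to finding $g_1\in H^{1/3}(0,T)$ with
\[
\left\langle g_1,\ \partial_x^2\varphi(0,\cdot)\right\rangle_{H^{1/3}(0,T),H^{-1/3}(0,T)}=\int_{-\infty}^{0}\phi\,\varphi(\cdot,0)\,dx-\int_{-\infty}^{0}u(T)\,\varphi_T\,dx
\]
for every $\varphi_T\in L^2(\R^-_x)$. As in Proposition \ref{proposition1}, the control is then recovered as $g_1=\partial_x^2\widehat{\varphi}(0,\cdot)$, where $\widehat{\varphi}$ is the solution issued from the minimizer $\widehat{\varphi}_T$ of the functional $\mathcal{J}(\varphi_T)=\tfrac12\|\partial_x^2\varphi(0,\cdot)\|_{H^{-1/3}(0,T)}^2+\int_{-\infty}^0 u(T)\varphi_T\,dx-\int_{-\infty}^0\phi\,\varphi(\cdot,0)\,dx$, the exact analogue of \eqref{functional}.

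Next I would prove the observability inequality stated in the Remark, $\|\varphi_T\|_{L^2(\R^-_x)}^2\le C\|\partial_x^2\varphi(0,\cdot)\|_{H^{-1/3}(0,T)}^2$, by the compactness--uniqueness argument of Proposition \ref{proposition2}. Assuming it fails, I would take a normalized sequence $\varphi_{n,T}$ with $\partial_x^2\varphi_n(0,\cdot)\to 0$ in $H^{-1/3}(0,T)$; the trace bounds \eqref{dependece}, \eqref{timetraces}, \eqref{derivativetrace} together with the interior gain-of-regularity estimate (cf. Theorem \ref{gainofregul}) bound $\varphi_n$ in $L^2(0,T;H^1_{loc})$ and $\partial_t\varphi_n$ in $L^2(0,T;H^{-2}_{loc})$, so Aubin--Lions compactness yields a limit $\varphi$, while the time-trace bound forces $\varphi_n(0,\cdot)\to\varphi(0,\cdot)=0$. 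The conservation identity obtained by pairing the equation with $t\varphi$ shows that $\{\varphi_{n,T}\}$ is Cauchy in $L^2(\R^-_x)$, so the limit satisfies $\|\varphi_T\|_{L^2(\R^-_x)}=1$ while solving the KdV equation on the half-line with the over-determined data $\varphi(0,\cdot)=\partial_x^2\varphi(0,\cdot)=0$.

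The crux is then the unique continuation step, for which I would establish the Dirichlet counterpart of Lemma \ref{lemaespectralcaso1}: the spectral problem $\lambda\varphi+\varphi'+\varphi'''=0$ on $(-\infty,0)$ with $\varphi(0)=\varphi''(0)=0$ admits no nontrivial $H^3(\R^-_x)$ solution. Repeating the Fourier/Paley--Wiener computation gives $\widehat{\varphi}(\xi)=-i\xi\,\varphi'(0)/(\lambda+i\xi-i\xi^3)$, a rational function that cannot belong to the Hardy space of the relevant half-plane; equivalently, writing $\varphi=\sum_j c_j e^{r_j x}$ with $r_j$ the roots of $r^3+r+\lambda=0$, only the (at most two) roots with positive real part give $L^2(\R^-_x)$ modes, and the conditions $c_1+c_2=0,\ c_1 r_1^2+c_2 r_2^2=0$ force $r_1=-r_2$, which is incompatible with $\mathrm{Re}\,r_1,\mathrm{Re}\,r_2>0$. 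Hence $\varphi\equiv 0$, contradicting $\|\varphi_T\|_{L^2(\R^-_x)}=1$ and proving the observability inequality.

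Finally, observability makes the continuous strictly convex functional $\mathcal{J}$ coercive, so the direct method of the calculus of variations produces a minimizer and hence the control $g_1\in H^{1/3}(\R^+_t)$; the stated bound $\|u\|_{C(\R^+_t;L^2(\R^-_x))}\le C(\|\phi\|_{L^2(\R^-_x)}+\|g_1\|_{H^{1/3}(\R^+_t)})$ follows from Lemmas \ref{estimatesgroup} and \ref{estimateforcing} applied to the representation \eqref{formulasolutionleft}. I expect the main obstacle to be precisely the unique continuation/spectral step with the Dirichlet pair $\varphi(0)=\varphi''(0)=0$ (rather than the Neumann pair $\varphi(0)=\varphi'(0)=0$), together with checking that the gain-of-regularity estimate needed for the compactness argument is available for this adjoint configuration.
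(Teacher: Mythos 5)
Your proposal is correct and follows essentially the same route as the paper, which treats this Dirichlet case only in a remark: it asserts the observability inequality $\left\|\varphi_T\right\|_{L^2(\R^-_x)}^2 \leq C\left\|\partial_x^2\varphi(0,\cdot)\right\|_{H^{-1/3}(0,T)}^2$ for the same backward system \eqref{adjointleft} and appeals to the HUM/compactness--uniqueness scheme already carried out for the Neumann case, exactly as you reproduce. Your explicit unique-continuation step for the overdetermined pair $\varphi(0)=\varphi''(0)=0$, via $\widehat{\varphi}(\xi)=-i\xi\,\varphi'(0)/(\lambda+i\xi-i\xi^3)$ (the Dirichlet analogue of Lemma \ref{lemaespectralcaso1}) together with the root-counting argument, correctly supplies the one detail the paper leaves implicit.
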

\end{remark}

\subsection{Controllability of the nonlinear systems}
Let $T > 0$, from Theorems \ref{controllinearonecontrolneuman} and \ref{controllinearonecontroldirichelt}, we can define the bounded linear operators
\begin{equation*}
\begin{array}{c c c c}
\Lambda_{l,D}: & L^2(\R^-_x) \times L^2(\R^-_x)  & \longrightarrow &H^{\frac{1}{3}}(\R^+_t) \\
& (\phi,\phi_T) & \longrightarrow & \Lambda_{l,D}(\phi,\phi_T)=g_1
\end{array}
\end{equation*}
and
\begin{equation*}
\begin{array}{c c c c}
\Lambda_{l,N}: & L^2(\R^-_x) \times L^2(\R^-_x)  & \longrightarrow & L^{2}(\R^+_t) \\
& (\phi,\phi_T) & \longrightarrow & \Lambda_{l,N}(\phi,\phi_T)=g_2,
\end{array}
\end{equation*}
where $g_1$ and $g_2$ are the controls of the linear system  \eqref{linearonecontroldirichlet} and \eqref{linearonecontrolNeumann}, respectively. Now, we are in a position to prove Theorem \ref{main-b}.

\subsubsection{\bf Proof of Theorem \ref{main-b}} 
Since the proof of this theorem is analogous to what was done before in Theorem \ref{main-a}, that is, the nonlinear problem is treated using a classical fixed-point argument, for the sake of completeness, we will give the sketch of the proof. According to \eqref{formulasolutionleft} the solution of \eqref{h2} can be written as
$$
u(x, t)=\theta(t) e^{-t (\partial_x+\partial_x^3)} \phi(x)-\frac{1}{2} \theta(t) \mathscr{D} \partial_x u^2(x, t)+\theta(t) \mathscr{L}_{-}^{\lambda_1} h_1(x, t,\phi)+\theta(t) \mathscr{L}_{-}^{\lambda_2} h_2(x, t,\phi),
$$
with 
$$
\left[\begin{array}{l}
h_1(t,\phi) \\
h_2(t,\phi)
\end{array}\right]=M\left[\begin{array}{c}
-\left.\theta(t) e^{-t (\partial_x+ \partial_x^3)} \phi\right|_{x=0}+\frac{1}{2} \theta(t) \mathscr{D} \partial_x u^2(0, t) \\
\theta(t) \mathcal{I}_{1 / 3}\left(g_2-\left.\theta \partial_x e^{-t (\partial_x+\partial_x^3)} \phi\right|_{x=0}+\frac{1}{2} \theta \partial_x \mathscr{D} \partial_x u^2(0, \cdot)\right)(t)
\end{array}\right],
$$
where $M$ is a matrix given by \eqref{matrixA}. 
 Consider the map 
\begin{align*}
\Gamma(u):= \theta(t) e^{-t (\partial_x+\partial_x^3)} \phi(x)-\frac{1}{2} \theta(t) \mathscr{D} \partial_{x} u^2(x, t)+\theta(t) \mathscr{L}_{-}^\lambda \widehat{h}_{1,T}(x, t)+\theta(t) \mathscr{L}_{-}^\lambda \widehat{h}_{2,T}(x, t). 
\end{align*}
In this case, $\widehat{h}^u_{1,T}(t)=h_1(t,\varphi)$ and $\widehat{h}^u_{2,T}(t)=h_2(t,\varphi)$ with 
$$
g_2(t)= \Lambda_{l,N}\left( \phi, \phi_T +\frac{1}{2} \theta(T) \mathscr{D} \partial_{x} u^2(x, T)  \right)(t). 
$$
Thanks to the Theorem \ref{controllinearonecontrolneuman}, we get that
$$
\Gamma(u)|_{t=0}=\phi \quad \text{and} \quad \Gamma(u)|_{t=T}=\phi_T.
$$
Remember the closed ball in X, 
$$
B_r=\left\lbrace u \in X_{0,b}\cap D_{\alpha}: \|u\|_{X_{0,b}\cap D_{\alpha}} \leq r\right\rbrace, 
$$
for some $r>0$.

\vspace{0.2cm}

Note, first, that $\Gamma$ maps $B_r$ into itself. Indeed, using Lemmas \ref{estimatesgroup}, \ref{estimateinhom} and \ref{estimateforcing}, there exists a constant $C > 0$, such that
\begin{align*}
\|\Gamma(u)\|_{X_{0,b}\cap D_{\alpha}} 
&\leq C\left( \left\|\theta  \right\|_{H^1(\R^+_t)}\left\| \phi \right\|_{L^2(\R^-_x)} + \left\| \partial_x u^2 \right\|_{X_{0,-b}} + \|\widehat{h}^u_{1,T}\|_{H^{\frac{1}{3}}(\R_t^+)}+\|\widehat{h}^u_{2,T}\|_{H^{\frac{1}{3}}(\R_t^+)}\right).
\end{align*}
From \cite[Lemma 5.10]{Holmer},  it follows that 
\begin{align*}
 \left\| \partial_x u^2 \right\|_{X_{0,-b}} \leq  C\left\| u \right\|_{X_{0,b}\cap D_{\alpha}}^2 \leq Cr^2.
\end{align*}
Note that from definition of $\Gamma$, the function $\widehat{h_1}$ and $\widehat{h}_2$ are given by
\begin{equation}\label{formula1}
\begin{split}
\widehat{h}^u_{1,T}(t)= &a_{1,1}\left( \left.\theta(t) e^{-t (\partial_x+ \partial_x^3)} \phi\right|_{x=0}+\frac{1}{2} \theta(t) \mathscr{D} \partial_x u^2(0, t) \right) \\
&+a_{1,2}\left(\theta \mathcal{I}_{1 / 3}\left(\Lambda_{l,N}\left( \phi, \phi_T +\frac{1}{2} \theta(T) \mathscr{D} \partial_{x} u^2(x, T)  \right)(t) \right.\right. \\
&\left.\left.-\left.\theta \partial_x e^{-t (\partial_x+\partial_x^3)} \phi\right|_{x=0}+\frac{1}{2} \theta \partial_x \mathscr{D} \partial_x u^2(0, \cdot)\right)(t)\right)
\end{split}
\end{equation}
and
\begin{equation}\label{formula2}
\begin{split}
\widehat{h}^u_{2,T}(t)= &a_{2,1}\left( \left.\theta(t) e^{-t (\partial_x+ \partial_x^3)} \phi\right|_{x=0}+\frac{1}{2} \theta(t) \mathscr{D} \partial_x u^2(0, t) \right) \\
&+a_{2,2}\left(\theta\mathcal{I}_{1 / 3}\left(\Lambda_{l,N}\left( \phi, \phi_T +\frac{1}{2} \theta(T) \mathscr{D} \partial_{x} u^2(x, T)  \right)(t) \right.\right. \\
&\left.\left.-\left.\theta \partial_x e^{-t (\partial_x+\partial_x^3)} \phi\right|_{x=0}+\frac{1}{2} \theta \partial_x \mathscr{D} \partial_x u^2(0, \cdot)\right)(t)\right),
\end{split}
\end{equation}
where 
\begin{equation*}
\begin{cases}
a_{1,1} = \frac{\sin \left(\frac{\pi}{3} \lambda_2-\frac{\pi}{6}\right)}{2 \sqrt{3} \sin \left[\frac{\pi}{3}\left(\lambda_2-\lambda_1\right)\right]} & a_{1,2}=- \frac{\sin \left(\frac{\pi}{3} \lambda_2+\frac{\pi}{6}\right)}{2 \sqrt{3} \sin \left[\frac{\pi}{3}\left(\lambda_2-\lambda_1\right)\right]} \\
a_{2,1} = \frac{\sin \left(\frac{\pi}{3} \lambda_1-\frac{\pi}{6}\right)}{2 \sqrt{3} \sin \left[\frac{\pi}{3}\left(\lambda_2-\lambda_1\right)\right]} & a_{2,2}=- \frac{\sin \left(\frac{\pi}{3} \lambda_1+\frac{\pi}{6}\right)}{2 \sqrt{3} \sin \left[\frac{\pi}{3}\left(\lambda_2-\lambda_1\right)\right]}.
\end{cases}
\end{equation*}
Thus, for $i=1,2$, we get
\begin{align*}
 \|\widehat{h}_{i,T}\|_{H^{\frac{1}{3}}(\R_t^+)} &\leq C\left( \left\|\left.\theta(t) e^{-t(\partial_x+ \partial_x^3)} \phi\right|_{x=0}\right\|_{H^{\frac{1}{3}}(\R_t^+)}+\left\|\frac{1}{2} \theta(t) \mathscr{D} \partial_x u^2(0, t)\right\|_{H^{\frac{1}{3}}(\R_t^+)}  \right.  \\
 &+  \left\| \mathcal{I}_{1 / 3}\Lambda_{l,N}\left( \phi, \phi_T +\frac{1}{2} \theta(T) \mathscr{D} \partial_{x} u^2(x, T)  \right)(t) \right\|_{H^{\frac{1}{3}}(\R_t^+)}  \\
  &\left. +  \left\| \mathcal{I}_{1 / 3} \left(-\theta\left. \partial_x e^{-t (\partial_x+\partial_x^3)} \phi\right|_{x=0}+\frac{1}{2} \theta \partial_x \mathscr{D} \partial_x u^2(0, \cdot)\right)(t)\right\|_{H^{\frac{1}{3}}(\R_t^+)}  \right) .
\end{align*}
Note that 
\begin{align*}
 \left\|\left.\theta(t) e^{-t(\partial_x+ \partial_x^3)} \phi\right|_{x=0}\right\|_{H^{\frac{1}{3}}(\R_t^+)}+\left\|\frac{1}{2} \theta(t) \mathscr{D} \partial_x u^2(0, t)\right\|_{H^{\frac{1}{3}}(\R_t^+)} 
 & \leq C\delta +2Cr^2 
\end{align*}
and
\begin{equation*}
\left\| \mathcal{I}_{1 / 3}\Lambda_{l,N}\left( \phi, \phi_T +\frac{1}{2} \theta(T) \mathscr{D} \partial_{x} u^2(x, T)  \right)(t) \right\|_{H^{\frac{1}{3}}(\R_t^+)} 
  \leq \left\| \Lambda_{l,N}\right\| \delta  +2C  \left\| \Lambda_{l,N}\right\|r^2. 
\end{equation*}
Therefore, we have that 
\begin{align*}
 \|\widehat{h}_{i,T}\|_{H^{\frac{1}{3}}(\R_t^+)} &\leq \left( C  +\left\| \Lambda_{l,N}\right\|\right) \delta  +2C \left(1+ \left\| \Lambda_{l,N}\right\|\right) r^2. 
\end{align*}
Finally, we have that 
\begin{align*}
\|\Gamma(u)\|_{X_{s,b}\cap D_{\alpha}} &\leq C \left( \left\|\theta  \right\|_{H^1(\R^+_t)} + \left\| \Lambda_{l,N}\right\|+ 2C\right)\delta +
C^2\left(6+ \left\| \Lambda_{l,N}\right\|\right) r^2 ,
\end{align*}
showing that $\Gamma$ maps $B_r$ into itself.

\vspace{0.2cm}

Now on, let us show that $\Gamma$ is a contraction. To do that, let $u, v  \in B_r$, and consider 
\begin{equation*}
\begin{split}
\Gamma(u) - \Gamma(v)= &-\frac{1}{2} \theta(t) \mathscr{D} \partial_{x} \left( u^2(x, t) - v^2(x,t)\right)+\theta(t) \mathscr{L}_{-}^\lambda \left( \widehat{h}_{1,T}^u(x, t)- \widehat{h}_{1,T}^v(x, t) \right)\\&+\theta(t) \mathscr{L}_{-}^\lambda \left( \widehat{h}_{2,T}^u(x, t)- \widehat{h}_{2,T}^v(x, t)\right).
\end{split}
\end{equation*}
Here  $ \widehat{h}_{i,T}^u$ and $ \widehat{h}_{i,T}^v$ for $i=1,2$ are given by \eqref{formula1} and \eqref{formula2}, respectively.
Observing that 
\begin{equation*}
\begin{split}
\widehat{h}_{i,T}^u(t)- \widehat{h}_{i,T}^v(t)=& a_{i,1}\left( \frac{1}{2} \theta(t) \mathscr{D} \partial_x \left( u^2(0, t)-v^2(0,t)\right)\right) \\
&+a_{i,2}\left(\theta\mathcal{I}_{1 / 3}\left(\Lambda_{l,N}\left( 0, \frac{1}{2} \theta(T) \mathscr{D} \partial_{x} \left( u^2(x, T)- v^2(x,T)\right)  \right)(t) \right.\right. \\
&\left.\left. +\frac{1}{2} \theta \partial_x \mathscr{D} \partial_x \left( u^2(0, t)-v^2(0,t)\right)\right)(t)\right).
\end{split}
\end{equation*}
we have 
$$
\left\| \Gamma(u) - \Gamma(v)  \right\|_{X_{0,b}\cap D_{\alpha}} 
\leq 4C^2\left( 5+\left\|\Lambda_{l,N} \right\|\right)r \left\| u- v\right\|_{X_{0,b}\cap D_{\alpha}},
$$
and taking $r$ such that $2C^2\left( 4+\left\|\Lambda_{l,n} \right\|\right)r<1$, $\Gamma$ is a contractive. Therefore, the results are obtained by using 
Banach fixed-point theorem.
The proof of the Theorem is complete. \qed

\section{Further comments and perspectives}\label{sec6}
Let us present some comments and perspectives on our analysis in this work. 

\subsection{Comments and novelties} There are important facts about the theorems presented in this work. We mention some of them below.
\begin{itemize}
\item[i.] Given that operational controllability can be extended to encompass bounded intervals, with all operators effectively operating within these constraints, we can similarly extend Theorems \ref{main-a} and \ref{main-b} to the bounded cases, provided suitable boundary conditions are met.
\item[ii.] Regarding the critical set phenomenon, it is notable that several authors have explored this aspect when considering the KdV equation on a bounded domain. We highlight two specific works. In references \cite{Rosier} and \cite{GG1}, the authors delve into this phenomenon for two sets of boundary conditions. However, in our case, where fewer boundary conditions are imposed, and the drift term ($\partial_xu$) is included in the equation, the critical set phenomenon does not manifest. This represents a novel observation for the KdV equation. Specifically, we can maintain the same control positioning, either $\partial_xu(L,t)=f(t)$ or $u(0,t)=g_1(t)$.
\item[iii.] Still concerning the critical set phenomenon, in a recent work \cite{CaSi}, the authors developed a refined analysis of the critical length phenomenon for the nonlinear Korteweg–de Vries equation under Neumann-type boundary conditions. The paper combines sharp well-posedness theory, hidden regularity estimates, fixed-point arguments, and the return method in order to recover controllability in situations where the linearized system fails due to the presence of critical lengths. We also emphasize that several other contributions have addressed related questions from different perspectives. For a broader overview of the subject and further developments concerning controllability and critical length phenomena for dispersive equations, we refer the reader to the surveys and related works \cite{Capistrano,cerpatut}.
\end{itemize}

The scenario presented in this work is completely new in terms of control theory. Employing a comprehensive approach enables us to formulate a broader relationship, thereby achieving precise control, denoted by $f$, which is essential for ensuring the exact controllability of the system under consideration, as stated in remark \ref{main-c}. Our analysis is illustrated in the scheme below (see Figure \ref{figurateorema} below), which shows the range of $s$ in the control space $H^s$ for which the relation presented in remark \ref{main-c} remains valid. 

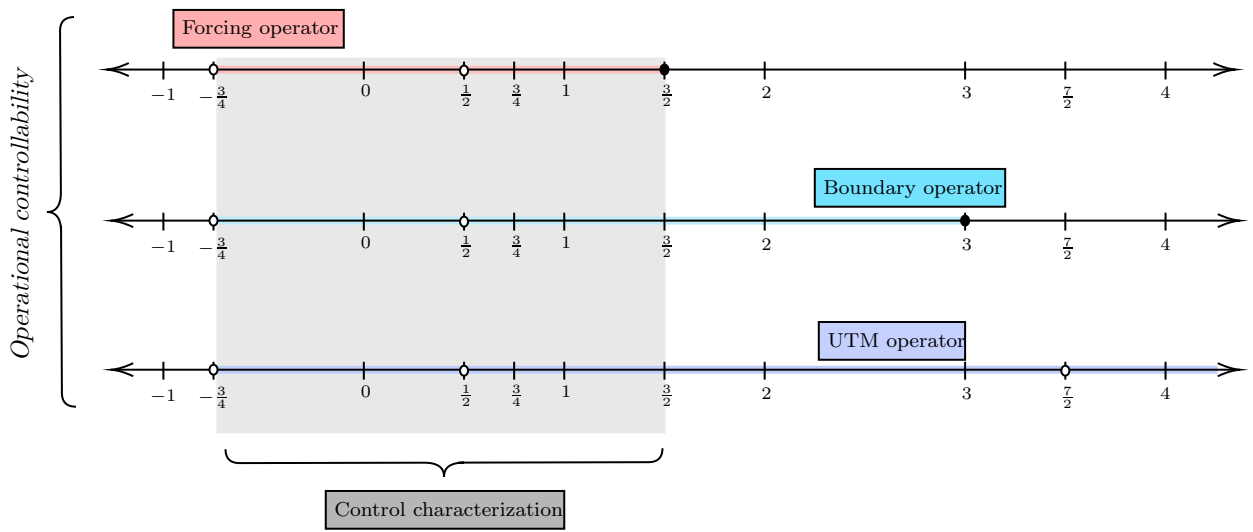
\begin{figure}[hbt]

\tikzset{every picture/.style={line width=0.75pt}} 

\begin{tikzpicture}[x=0.75pt,y=0.75pt,yscale=-1,xscale=1]

\draw  [color={rgb, 255:red, 255; green, 255; blue, 255 }  ,draw opacity=1 ][fill={rgb, 255:red, 232; green, 232; blue, 232 }  ,fill opacity=0.57 ] (112.5,32) -- (288,32) -- (288,222) -- (112.5,222) -- cycle ;
\draw [color={rgb, 255:red, 192; green, 206; blue, 251 }  ,draw opacity=1 ][line width=3]    (112.5,187) -- (613.5,187) ;
\draw    (63.5,187) -- (622.5,187) ;
\draw [shift={(624.5,187)}, rotate = 180] [color={rgb, 255:red, 0; green, 0; blue, 0 }  ][line width=0.75]    (10.93,-3.29) .. controls (6.95,-1.4) and (3.31,-0.3) .. (0,0) .. controls (3.31,0.3) and (6.95,1.4) .. (10.93,3.29)   ;
\draw [shift={(61.5,187)}, rotate = 0] [color={rgb, 255:red, 0; green, 0; blue, 0 }  ][line width=0.75]    (10.93,-3.29) .. controls (6.95,-1.4) and (3.31,-0.3) .. (0,0) .. controls (3.31,0.3) and (6.95,1.4) .. (10.93,3.29)   ;
\draw [color={rgb, 255:red, 192; green, 237; blue, 251 }  ,draw opacity=1 ][line width=3]    (112.5,112) -- (487.5,112) ;
\draw    (63.5,112) -- (622.5,112) ;
\draw [shift={(624.5,112)}, rotate = 180] [color={rgb, 255:red, 0; green, 0; blue, 0 }  ][line width=0.75]    (10.93,-3.29) .. controls (6.95,-1.4) and (3.31,-0.3) .. (0,0) .. controls (3.31,0.3) and (6.95,1.4) .. (10.93,3.29)   ;
\draw [shift={(61.5,112)}, rotate = 0] [color={rgb, 255:red, 0; green, 0; blue, 0 }  ][line width=0.75]    (10.93,-3.29) .. controls (6.95,-1.4) and (3.31,-0.3) .. (0,0) .. controls (3.31,0.3) and (6.95,1.4) .. (10.93,3.29)   ;
\draw [color={rgb, 255:red, 251; green, 192; blue, 192 }  ,draw opacity=1 ][line width=3]    (112.5,36) -- (337.5,36) ;
\draw    (61.29,36) -- (115.71,36) -- (620.29,36) ;
\draw [shift={(622.29,36)}, rotate = 180] [color={rgb, 255:red, 0; green, 0; blue, 0 }  ][line width=0.75]    (10.93,-3.29) .. controls (6.95,-1.4) and (3.31,-0.3) .. (0,0) .. controls (3.31,0.3) and (6.95,1.4) .. (10.93,3.29)   ;
\draw [shift={(59.29,36)}, rotate = 0] [color={rgb, 255:red, 0; green, 0; blue, 0 }  ][line width=0.75]    (10.93,-3.29) .. controls (6.95,-1.4) and (3.31,-0.3) .. (0,0) .. controls (3.31,0.3) and (6.95,1.4) .. (10.93,3.29)   ;
\draw    (187.5,32) -- (187.5,41) ;
\draw    (287.5,32) -- (287.5,41) ;
\draw    (387.5,32) -- (387.5,41) ;
\draw    (487.5,32) -- (487.5,41) ;
\draw    (587.5,32) -- (587.5,41) ;
\draw    (87.5,32) -- (87.5,41) ;
\draw    (112.5,32) -- (112.5,41) ;
\draw    (237.5,32) -- (237.5,41) ;
\draw    (537.5,32) -- (537.5,41) ;
\draw    (337.5,32) -- (337.5,41) ;
\draw    (262.5,32) -- (262.5,41) ;
\draw    (187.5,108) -- (187.5,117) ;
\draw    (287.5,108) -- (287.5,117) ;
\draw    (387.5,108) -- (387.5,117) ;
\draw    (487.5,108) -- (487.5,117) ;
\draw    (587.5,108) -- (587.5,117) ;
\draw    (87.5,108) -- (87.5,117) ;
\draw    (112.5,108) -- (112.5,117) ;
\draw    (237.5,108) -- (237.5,117) ;
\draw    (537.5,108) -- (537.5,117) ;
\draw    (337.5,108) -- (337.5,117) ;
\draw    (262.5,108) -- (262.5,117) ;
\draw    (187.5,183) -- (187.5,192) ;
\draw    (287.5,183) -- (287.5,192) ;
\draw    (387.5,183) -- (387.5,192) ;
\draw    (487.5,183) -- (487.5,192) ;
\draw    (587.5,183) -- (587.5,192) ;
\draw    (87.5,183) -- (87.5,192) ;
\draw    (112.5,183) -- (112.5,192) ;
\draw    (237.5,183) -- (237.5,192) ;
\draw    (537.5,183) -- (537.5,192) ;
\draw    (337.5,183) -- (337.5,192) ;
\draw    (262.5,183) -- (262.5,192) ;
\draw  [fill={rgb, 255:red, 255; green, 255; blue, 255 }  ,fill opacity=1 ] (239.5,36.5) .. controls (239.5,35.12) and (238.6,34) .. (237.5,34) .. controls (236.4,34) and (235.5,35.12) .. (235.5,36.5) .. controls (235.5,37.88) and (236.4,39) .. (237.5,39) .. controls (238.6,39) and (239.5,37.88) .. (239.5,36.5) -- cycle ;
\draw  [fill={rgb, 255:red, 3; green, 0; blue, 0 }  ,fill opacity=1 ] (339.5,36) .. controls (339.5,34.62) and (338.6,33.5) .. (337.5,33.5) .. controls (336.4,33.5) and (335.5,34.62) .. (335.5,36) .. controls (335.5,37.38) and (336.4,38.5) .. (337.5,38.5) .. controls (338.6,38.5) and (339.5,37.38) .. (339.5,36) -- cycle ;
\draw  [fill={rgb, 255:red, 255; green, 255; blue, 255 }  ,fill opacity=1 ] (239.5,112.5) .. controls (239.5,111.12) and (238.6,110) .. (237.5,110) .. controls (236.4,110) and (235.5,111.12) .. (235.5,112.5) .. controls (235.5,113.88) and (236.4,115) .. (237.5,115) .. controls (238.6,115) and (239.5,113.88) .. (239.5,112.5) -- cycle ;
\draw  [fill={rgb, 255:red, 3; green, 0; blue, 0 }  ,fill opacity=1 ] (489.5,112) .. controls (489.5,110.62) and (488.6,109.5) .. (487.5,109.5) .. controls (486.4,109.5) and (485.5,110.62) .. (485.5,112) .. controls (485.5,113.38) and (486.4,114.5) .. (487.5,114.5) .. controls (488.6,114.5) and (489.5,113.38) .. (489.5,112) -- cycle ;
\draw  [fill={rgb, 255:red, 255; green, 255; blue, 255 }  ,fill opacity=1 ] (239.5,187.5) .. controls (239.5,186.12) and (238.6,185) .. (237.5,185) .. controls (236.4,185) and (235.5,186.12) .. (235.5,187.5) .. controls (235.5,188.88) and (236.4,190) .. (237.5,190) .. controls (238.6,190) and (239.5,188.88) .. (239.5,187.5) -- cycle ;
\draw  [fill={rgb, 255:red, 255; green, 255; blue, 255 }  ,fill opacity=1 ] (539.5,187.5) .. controls (539.5,186.12) and (538.6,185) .. (537.5,185) .. controls (536.4,185) and (535.5,186.12) .. (535.5,187.5) .. controls (535.5,188.88) and (536.4,190) .. (537.5,190) .. controls (538.6,190) and (539.5,188.88) .. (539.5,187.5) -- cycle ;
\draw   (42.9,9.6) .. controls (38.23,9.63) and (35.91,11.97) .. (35.94,16.64) -- (36.35,97.64) .. controls (36.38,104.31) and (34.07,107.65) .. (29.4,107.67) .. controls (34.07,107.65) and (36.42,110.97) .. (36.45,117.64)(36.43,114.64) -- (36.86,198.64) .. controls (36.88,203.31) and (39.22,205.63) .. (43.89,205.6) ;
\draw   (118.5,227) .. controls (118.5,231.67) and (120.83,234) .. (125.5,234) -- (191.75,234) .. controls (198.42,234) and (201.75,236.33) .. (201.75,241) .. controls (201.75,236.33) and (205.08,234) .. (211.75,234)(208.75,234) -- (278,234) .. controls (282.67,234) and (285,231.67) .. (285,227) ;
\draw  [fill={rgb, 255:red, 255; green, 255; blue, 255 }  ,fill opacity=1 ] (114.5,36) .. controls (114.5,34.62) and (113.6,33.5) .. (112.5,33.5) .. controls (111.4,33.5) and (110.5,34.62) .. (110.5,36) .. controls (110.5,37.38) and (111.4,38.5) .. (112.5,38.5) .. controls (113.6,38.5) and (114.5,37.38) .. (114.5,36) -- cycle ;
\draw  [fill={rgb, 255:red, 255; green, 255; blue, 255 }  ,fill opacity=1 ] (114.5,112) .. controls (114.5,110.62) and (113.6,109.5) .. (112.5,109.5) .. controls (111.4,109.5) and (110.5,110.62) .. (110.5,112) .. controls (110.5,113.38) and (111.4,114.5) .. (112.5,114.5) .. controls (113.6,114.5) and (114.5,113.38) .. (114.5,112) -- cycle ;
\draw  [fill={rgb, 255:red, 255; green, 255; blue, 255 }  ,fill opacity=1 ] (114.5,187) .. controls (114.5,185.62) and (113.6,184.5) .. (112.5,184.5) .. controls (111.4,184.5) and (110.5,185.62) .. (110.5,187) .. controls (110.5,188.38) and (111.4,189.5) .. (112.5,189.5) .. controls (113.6,189.5) and (114.5,188.38) .. (114.5,187) -- cycle ;
\draw  [fill={rgb, 255:red, 255; green, 255; blue, 255 }  ,fill opacity=1 ] (289.5,36.5) .. controls (289.5,35.12) and (288.6,34) .. (287.5,34) .. controls (286.4,34) and (285.5,35.12) .. (285.5,36.5) .. controls (285.5,37.88) and (286.4,39) .. (287.5,39) .. controls (288.6,39) and (289.5,37.88) .. (289.5,36.5) -- cycle ;
\draw  [fill={rgb, 255:red, 255; green, 255; blue, 255 }  ,fill opacity=1 ] (289.5,112.5) .. controls (289.5,111.12) and (288.6,110) .. (287.5,110) .. controls (286.4,110) and (285.5,111.12) .. (285.5,112.5) .. controls (285.5,113.88) and (286.4,115) .. (287.5,115) .. controls (288.6,115) and (289.5,113.88) .. (289.5,112.5) -- cycle ;
\draw  [fill={rgb, 255:red, 255; green, 255; blue, 255 }  ,fill opacity=1 ] (289.5,187.5) .. controls (289.5,186.12) and (288.6,185) .. (287.5,185) .. controls (286.4,185) and (285.5,186.12) .. (285.5,187.5) .. controls (285.5,188.88) and (286.4,190) .. (287.5,190) .. controls (288.6,190) and (289.5,188.88) .. (289.5,187.5) -- cycle ;

\draw (103.5,43.4) node [anchor=north west][inner sep=0.75pt]  [font=\tiny]  {$-\frac{3}{4}$};
\draw (284.5,42.4) node [anchor=north west][inner sep=0.75pt]  [font=\tiny]  {$1$};
\draw (233.5,42.4) node [anchor=north west][inner sep=0.75pt]  [font=\tiny]  {$\frac{1}{2}$};
\draw (484.5,43.4) node [anchor=north west][inner sep=0.75pt]  [font=\tiny]  {$3$};
\draw (533.5,43.4) node [anchor=north west][inner sep=0.75pt]  [font=\tiny]  {$\frac{7}{2}$};
\draw (333.5,42.4) node [anchor=north west][inner sep=0.75pt]  [font=\tiny]  {$\frac{3}{2}$};
\draw (258.5,42.4) node [anchor=north west][inner sep=0.75pt]  [font=\tiny]  {$\frac{3}{4}$};
\draw (79.5,44.4) node [anchor=north west][inner sep=0.75pt]  [font=\tiny]  {$-1$};
\draw (384.5,43.4) node [anchor=north west][inner sep=0.75pt]  [font=\tiny]  {$2$};
\draw (184.5,42.4) node [anchor=north west][inner sep=0.75pt]  [font=\tiny]  {$0$};
\draw (583.5,43.4) node [anchor=north west][inner sep=0.75pt]  [font=\tiny]  {$4$};
\draw (103.5,119.4) node [anchor=north west][inner sep=0.75pt]  [font=\tiny]  {$-\frac{3}{4}$};
\draw (284.5,118.4) node [anchor=north west][inner sep=0.75pt]  [font=\tiny]  {$1$};
\draw (233.5,118.4) node [anchor=north west][inner sep=0.75pt]  [font=\tiny]  {$\frac{1}{2}$};
\draw (484.5,119.4) node [anchor=north west][inner sep=0.75pt]  [font=\tiny]  {$3$};
\draw (533.5,119.4) node [anchor=north west][inner sep=0.75pt]  [font=\tiny]  {$\frac{7}{2}$};
\draw (333.5,118.4) node [anchor=north west][inner sep=0.75pt]  [font=\tiny]  {$\frac{3}{2}$};
\draw (258.5,118.4) node [anchor=north west][inner sep=0.75pt]  [font=\tiny]  {$\frac{3}{4}$};
\draw (79.5,120.4) node [anchor=north west][inner sep=0.75pt]  [font=\tiny]  {$-1$};
\draw (384.5,119.4) node [anchor=north west][inner sep=0.75pt]  [font=\tiny]  {$2$};
\draw (184.5,118.4) node [anchor=north west][inner sep=0.75pt]  [font=\tiny]  {$0$};
\draw (583.5,119.4) node [anchor=north west][inner sep=0.75pt]  [font=\tiny]  {$4$};
\draw (103.5,194.4) node [anchor=north west][inner sep=0.75pt]  [font=\tiny]  {$-\frac{3}{4}$};
\draw (284.5,193.4) node [anchor=north west][inner sep=0.75pt]  [font=\tiny]  {$1$};
\draw (233.5,193.4) node [anchor=north west][inner sep=0.75pt]  [font=\tiny]  {$\frac{1}{2}$};
\draw (484.5,194.4) node [anchor=north west][inner sep=0.75pt]  [font=\tiny]  {$3$};
\draw (533.5,194.4) node [anchor=north west][inner sep=0.75pt]  [font=\tiny]  {$\frac{7}{2}$};
\draw (333.5,193.4) node [anchor=north west][inner sep=0.75pt]  [font=\tiny]  {$\frac{3}{2}$};
\draw (258.5,193.4) node [anchor=north west][inner sep=0.75pt]  [font=\tiny]  {$\frac{3}{4}$};
\draw (79.5,195.4) node [anchor=north west][inner sep=0.75pt]  [font=\tiny]  {$-1$};
\draw (384.5,194.4) node [anchor=north west][inner sep=0.75pt]  [font=\tiny]  {$2$};
\draw (184.5,193.4) node [anchor=north west][inner sep=0.75pt]  [font=\tiny]  {$0$};
\draw (583.5,194.4) node [anchor=north west][inner sep=0.75pt]  [font=\tiny]  {$4$};
\draw  [fill={rgb, 255:red, 255; green, 175; blue, 175 }  ,fill opacity=1 ]  (318.5,9) -- (403.5,9) -- (403.5,28) -- (318.5,28) -- cycle  ;
\draw (321.5,13) node [anchor=north west][inner sep=0.75pt]  [font=\scriptsize] [align=left] {Forcing operator};
\draw  [fill={rgb, 255:red, 115; green, 227; blue, 255 }  ,fill opacity=1 ]  (319.5,84) -- (414.5,84) -- (414.5,103) -- (319.5,103) -- cycle  ;
\draw (322.5,88) node [anchor=north west][inner sep=0.75pt]  [font=\scriptsize] [align=left] {Boundary operator};
\draw  [fill={rgb, 255:red, 198; green, 208; blue, 255 }  ,fill opacity=1 ]  (323.5,157) -- (396.5,157) -- (396.5,176) -- (323.5,176) -- cycle  ;
\draw (326.5,161) node [anchor=north west][inner sep=0.75pt]  [font=\scriptsize] [align=left] {UTM operator};
\draw  [fill={rgb, 255:red, 182; green, 182; blue, 182 }  ,fill opacity=1 ]  (142.5,248) -- (261.5,248) -- (261.5,267) -- (142.5,267) -- cycle  ;
\draw (145.5,252) node [anchor=north west][inner sep=0.75pt]  [font=\scriptsize] [align=left] {Operational Control};
\draw (10,181.63) node [anchor=north west][inner sep=0.75pt]  [font=\small,rotate=-270,xslant=0.25] [align=left] {Operational Controllability};
\end{tikzpicture}
\caption{Operational controllability and control characterization relations}\label{figurateorema}
\end{figure}

\subsection{Non-emptiness of $\mathcal{A}^0_r(\phi,T)$ and $\mathcal{A}^0_l(\phi,T)$} As mentioned in the introduction, the admissible final state class $\mathcal{A}^0_r(\phi,T)$ is non-empty for any given $\phi\in L^2(\mathbb{R}^+)$ and $T>0$. 
In fact, to see this, consider the linear initial-boundary value problem with homogeneous boundary conditions:
\begin{equation}\label{eq:linear_homog}
	\begin{cases}
		\partial_t v + \partial_x v + \partial_x^3 v = 0, & (x,t)\in(0,+\infty)\times(0,T),\\
		v(0,t) = 0, & t\in(0,T),\\
		v(x,0) = \phi(x), & x\in(0,+\infty).
	\end{cases}
\end{equation}
By the well-posedness theory for the linear Korteweg--de Vries equation (including the drift term $\partial_x v$) on the right half-line developed by Bona, Sun, and Zhang \cite{Bonahalf1,Bonahalf2}, Colliander and Kenig \cite{CK}, and Holmer \cite{Holmer} (see also Fokas \cite{fokas,fokas0,fokas2} for the unified transform approach), problem \eqref{eq:linear_homog} admits a unique solution $v\in C([0,T];L^2(\mathbb{R}^+))$. The present approach follows Holmer's method of boundary forcing operators, which directly accommodates the drift term without altering the analysis. More generally, for every boundary datum $f\in H^{1/3}(0,T)$, the non-homogeneous problem associated with \eqref{h1} admits a unique solution $u\in C([0,T];L^2(\mathbb{R}^+))$, and the data-to-solution map
\[
(\phi,f)\longmapsto u
\]
is continuous from $L^2(\mathbb{R}^+)\times H^{1/3}(0,T)$ into $C([0,T];L^2(\mathbb{R}^+))$. Returning to \eqref{eq:linear_homog}, the solution $v$ satisfies Definition~\ref{def_1} with $s=0$ and boundary control $f\equiv 0\in H^{1/3}(0,T)$. The temporal trace at $t=T$ yields
\[
\phi_T := v(\cdot,T) \in L^2(\mathbb{R}^+),
\]
and by construction, $\phi_T\in\mathcal{A}^0_r(\phi,T)$. This proves that $\mathcal{A}^0_r(\phi,T)\neq\emptyset$.

An analogous argument applies to the left half-line case associated with \eqref{h2}, where two boundary conditions are required at $x=0$. Consider the linear problem with homogeneous boundary data ($g_1=g_2=0$). For any $\psi\in L^2(\mathbb{R}^-)$, Holmer's well-posedness theory 
\cite{Holmer} (with the drift term $-\partial_x w$ explicitly included in the present work) ensures the existence of a unique solution  $w\in C([0,T];L^2(\mathbb{R}^-))$, and the data-to-solution map $(\psi,g_1,g_2)\longmapsto w $ is continuous from $L^2(\mathbb{R}^-)\times H^{1/3}(0,T)\times H^{-1/3}(0,T)$ into $C([0,T];L^2(\mathbb{R}^-))$. It is worth noting that Bona, Sun, and Zhang's approach does not cover the left half-line case; Holmer's method is particularly well suited to this configuration. Taking $g_1\equiv g_2\equiv 0$, the solution $w$ of linearized system of  \eqref{h2} 
yields $\psi_T:=w(\cdot,T)\in L^2(\mathbb{R}^-)$, and thus $\psi_T\in\mathcal{A}^0_\ell(\psi,T)$, proving that $\mathcal{A}^0_\ell(\psi,T)\neq\emptyset$.

\begin{remark} It is worth placing this result in historical context. Rosier's pioneering work \cite{RosierSICON2000} established exact boundary controllability for the linear KdV equation on the half-line in the weaker framework $L^2_{\mathrm{loc}}((0,+\infty)\times(0,T))$ with boundary controls in distribution spaces of negative order, showing that any $u_0,u_T\in L^2(\mathbb{R}^+)$ can be connected by such a solution. However, if one requires solutions to belong to $L^\infty(0,T;L^2(\mathbb{R}^+))$, controllability fails: Rosier showed there exists $\tilde{\varphi}\in L^2(\mathbb{R}^+)$ such that  $0\notin\mathcal{A}^0_r(\tilde{\varphi},T)$ when the admissible final state class is defined using solutions in $L^\infty(0,T;L^2(\mathbb{R}^+))$. This illustrates the delicate dependence of controllability properties on the functional framework. The well-posedness theory cited above was developed after Rosier's work and provides the natural functional setting $\phi\in L^2(\mathbb{R}^+)$, $f\in H^{1/3}(0,T)$, and $u\in C([0,T];L^2(\mathbb{R}^+))$, which is consistent with the equation's scaling. Within this modern framework, the non-emptiness of $\mathcal{A}^0_r(\phi,T)$ (and similarly $\mathcal{A}^0_\ell(\psi,T)$ for the left half-line) follows from the well-posedness of the homogeneous boundary problem, and the continuous dependence on data ensures the structure needed for fixed-point arguments in the nonlinear problem.
\end{remark}

\subsection{Noncritical length phenomenon} Rosier \cite{Rosier} showed that considering $L\notin\mathcal{N}$, where $\mathcal{N}$ is defined by \eqref{critical}, that the associated linear system \eqref{2}  posed on the bounded interval 
\begin{equation}
\left\{
\begin{array}
[c]{lll}%
\partial_tu+\partial_xu+\partial_x^3u=0, &  & \text{ in } (0,L)\times(0,T),\\
u(0,t)=u(L,t)=0,\text{ }\partial_xu(L,t)=g(t),&  & \text{ in }(0,T),\\
u(x,0)=u_0(x),& & \text{ in }(0,L),
\end{array}
\right.  \label{2a}
\end{equation}
is controllable; roughly speaking, if $L\in\mathcal{N}$ system \eqref{2a} is not controllable, that is,  there exists a finite-dimensional subspace of $L^2(0,L)$, denoted by $\mathcal{M}=\mathcal{M}(L)$, which is unreachable from $0$ for the linear system.  More precisely, for every nonzero state $\psi\in\mathcal{M}$, $g\in L^2(0,T)$  and $u\in C([0,T];L^2(0,L))\cap L^2(0,T;H^1(0,L))$ satisfying \eqref{2a} and $u(\cdot,0)=0$, one has $u(\cdot,T)\neq\psi$. 

\begin{definition}
A spatial domain $(0,L)$ is called \textit{critical}  for the system \eqref{2a} if its domain length $L$ belongs to the critical set $\mathcal{N}$.
\end{definition}

Following the work of Rosier \cite{Rosier}, the boundary control system of the KdV equation posed on the finite interval $(0,L)$ with various control inputs has been intensively studied  (cf.  \cite{CaCaZh,cerpa1,cerpa2,CoCre,crepeau,GG,GG1,Gui}  and see \cite{cerpatut, RZsurvey} for more complete reviews).
Essentially, the critical length phenomenon arises from employing HUM, utilizing a compactness-uniqueness argument to establish certain observability inequalities. Certainly, through a contradiction argument, we have determined that the adjoint system is observable, which allows us to explore a complex function. The critical length phenomenon manifests when this complex function is potentially entire.

Thus, we are interested in analyzing whether the critical length phenomenon appears for the KdV equation in unbounded domains. Let us consider the three cases treated in this work.

\subsubsection{{\bf The case of the left-half line}} We already know that the critical length phenomenon naturally arises in the case of Neumann boundary control within a bounded domain. However, when considering the left half-line, we can further examine the following control problem
\begin{equation}\label{linearleft_2}
\begin{cases}
\partial_t u+\partial_x u+ \partial_x^3u=0, & \text { for }(x, t) \in(-\infty, L) \times(0, T), \\ 
u(L, t)=0,\quad\ \partial_x u(L, t)=g_2(t), & \text { for } t \in(0, T), \\ u(x, 0)=\phi(x), & \text { for } x \in(-\infty, L).\end{cases}
\end{equation}
By using the HUM, let us recall that the backward system associated with \eqref{linearleft_2} is
\begin{equation}\label{adjointforever}
\begin{cases}
\partial_t\varphi+ \partial_x\varphi+\partial_x^3\varphi=0,& \text { for }(x, t) \in(-\infty,L) \times(0, T),\\ 
\varphi(L, t)=0,& \text { for } t \in(0, T),\\
 \varphi(x, T)=\varphi_T(x),& \text { for } x \in(-\infty,L). 
\end{cases}
\end{equation}
and its observability inequality is given by 
$$
\left\|\varphi_T\right\|_{L^2(-\infty,L)}^2 \leq  C \left\|\partial_x\varphi(L,\cdot)\right\|_{L^{2}(0, T)}^2.
$$
Through the application of the compactness-uniqueness argument, the function in the complex plane associated with the observability inequality is 
 \begin{equation*}
\widehat{\varphi}(\xi)=  \dfrac{e^{-iL\xi}\varphi^{\prime\prime}(L)}{\lambda -i\xi- i\xi^3}.
 \end{equation*}
Note that for any $L>0$, the function $\widehat{\varphi}$ can not be entire. Thus, we do not have any restriction on the length $L$. 

Also, in the case of the control acting in the Dirichlet condition
\begin{equation*}
\begin{cases}
\partial_t u+ \partial_xu+ \partial_x^3u=0,& \text { for }(x, t) \in(-\infty, L) \times(0, T),\\ 
u(L, t)=g_1(t), \quad \partial_x u(L, t)=0,& \text { for } t \in(0, T), \\ u(x, 0)=\phi(x),& \text { for } x \in(-\infty, L),\end{cases}
\end{equation*}
the backward system and its observability inequality are given by \eqref{adjointforever} and 
$$
\left\|\varphi_T\right\|_{L^2(-\infty,L)}^2 \leq  C \left\|\partial_x^2\varphi(L,\cdot)\right\|_{H^{-\frac{1}{3}}(0, T)}^2,
$$
respectively.  In this case, the function in the plane complex is given by
\begin{equation*}
\widehat{\varphi}(\xi)=  \dfrac{\xi e^{-iL\xi} \varphi^{\prime}(L)}{\xi^3+\xi+i\lambda},
 \end{equation*}
and the function $\widehat{\varphi}$ can not be entire. Again, no restriction on the length $L$ is necessary. 

\subsubsection{{\bf The case of the  right-half line}} Consider the control problem posed on the right half-line
\begin{equation}\label{linearright_1}
\begin{cases}
 \partial_tu+ \partial_xu+ \partial_x^3u=0, & \text { for }(x, t) \in(L,+\infty) \times(0, T), \\
 u(L, t)=f(t), & \text { for } t \in(0, T), \\ u(x, 0)=\phi(x),& \text { for } x \in(L,+\infty).
\end{cases}.
\end{equation}
Again, using HUM, the backward system associated with \eqref{linearright_1} is
\begin{equation*}
\begin{cases}
\partial_t\varphi+\partial_x \varphi+\partial_x^3\varphi=0, & \text { for }(x, t) \in(L,+\infty) \times(0, T), \\ 
\varphi(L, t)=\partial_x\varphi(L,t)=0,& \text { for } t \in(0, T), \\
 \varphi(x, T)=\varphi_T(x), & \text { for } x \in(L,+\infty),
\end{cases}
\end{equation*}
and its observability inequality is given by 
\begin{equation*}
\begin{aligned}
\left\|\varphi_T\right\|_{L^2(L,\infty)}^2 \leq & C \left\|\partial_x^2\varphi(L,\cdot)\right\|_{H^{-\frac{1}{3}}(0, T)}^2.
\end{aligned}
\end{equation*}
Through the application of the compactness-uniqueness argument, we do not associate any function in the complex plane, as we directly derive a contradiction from the spectral analysis of the operator.

\vspace{0.2cm}
To summarize, considering all the boundary controllability on the half-line in our work, the critical length phenomenon does not exist, suggesting a behavior completely different compared with the bounded interval, as in the case of the following works \cite{Rosier} and \cite{GG1}.

\subsection{Controllability in $H^s$ \textit{via} forcing operator} From the preceding analysis, considering the case $-\frac34 < s \leq \frac{3}{2}$, with $s\neq \frac12$, we can extend our understanding of controllability for the KdV equation for this range of $s$. By leveraging the well-posedness principles observed in half-line scenarios, we can derive the following scheme:
\vglue 0.5cm
\tikzset{every picture/.style={line width=0.75pt}} 
\begin{center}
\begin{tikzpicture}[x=0.75pt,y=0.75pt,yscale=-1,xscale=0.98]

\draw  [fill={rgb, 255:red, 229; green, 249; blue, 184 }  ,fill opacity=1 ] (74.3,224.4) .. controls (74.3,218.66) and (78.96,214) .. (84.7,214) -- (595.2,214) .. controls (600.95,214) and (605.6,218.66) .. (605.6,224.4) -- (605.6,255.6) .. controls (605.6,261.34) and (600.95,266) .. (595.2,266) -- (84.7,266) .. controls (78.96,266) and (74.3,261.34) .. (74.3,255.6) -- cycle ;

\draw  [fill={rgb, 255:red, 237; green, 237; blue, 237 }  ,fill opacity=1 ] (20.17,72.4) .. controls (20.17,65) and (26.17,59) .. (33.57,59) -- (244.35,59) .. controls (251.75,59) and (257.75,65) .. (257.75,72.4) -- (257.75,112.6) .. controls (257.75,120) and (251.75,126) .. (244.35,126) -- (33.57,126) .. controls (26.17,126) and (20.17,120) .. (20.17,112.6) -- cycle ;

\draw  [fill={rgb, 255:red, 231; green, 249; blue, 204 }  ,fill opacity=1 ] (396.09,74.4) .. controls (396.09,67) and (402.09,61) .. (409.49,61) -- (636.31,61) .. controls (643.71,61) and (649.71,67) .. (649.71,74.4) -- (649.71,114.6) .. controls (649.71,122) and (643.71,128) .. (636.31,128) -- (409.49,128) .. controls (402.09,128) and (396.09,122) .. (396.09,114.6) -- cycle ;

\draw    (258.76,101) -- (391.08,101) ;
\draw [shift={(393.08,101)}, rotate = 180] [color={rgb, 255:red, 0; green, 0; blue, 0 }  ][line width=0.75]    (10.93,-3.29) .. controls (6.95,-1.4) and (3.31,-0.3) .. (0,0) .. controls (3.31,0.3) and (6.95,1.4) .. (10.93,3.29)   ;
\draw  [fill={rgb, 255:red, 247; green, 241; blue, 171 }  ,fill opacity=1 ] (297.85,37.6) .. controls (297.85,35.06) and (299.91,33) .. (302.45,33) -- (343.37,33) .. controls (345.91,33) and (347.97,35.06) .. (347.97,37.6) -- (347.97,51.4) .. controls (347.97,53.94) and (345.91,56) .. (343.37,56) -- (302.45,56) .. controls (299.91,56) and (297.85,53.94) .. (297.85,51.4) -- cycle ;

\draw    (320.91,56) -- (320.91,98) ;
\draw [shift={(320.91,100)}, rotate = 270] [color={rgb, 255:red, 0; green, 0; blue, 0 }  ][line width=0.75]    (10.93,-3.29) .. controls (6.95,-1.4) and (3.31,-0.3) .. (0,0) .. controls (3.31,0.3) and (6.95,1.4) .. (10.93,3.29)   ;
\draw  [fill={rgb, 255:red, 231; green, 249; blue, 204 }  ,fill opacity=1 ] (467.27,15.6) .. controls (467.27,13.06) and (469.33,11) .. (471.87,11) -- (572.94,11) .. controls (575.48,11) and (577.54,13.06) .. (577.54,15.6) -- (577.54,29.4) .. controls (577.54,31.94) and (575.48,34) .. (572.94,34) -- (471.87,34) .. controls (469.33,34) and (467.27,31.94) .. (467.27,29.4) -- cycle ;

\draw [fill={rgb, 255:red, 231; green, 249; blue, 204 }  ,fill opacity=1 ]   (521.4,34) -- (521.4,57.5) ;
\draw [shift={(521.4,59.5)}, rotate = 270] [color={rgb, 255:red, 0; green, 0; blue, 0 }  ][line width=0.75]    (10.93,-3.29) .. controls (6.95,-1.4) and (3.31,-0.3) .. (0,0) .. controls (3.31,0.3) and (6.95,1.4) .. (10.93,3.29)   ;

\draw  [fill={rgb, 255:red, 237; green, 237; blue, 237 }  ,fill opacity=1 ] (16.16,15.6) .. controls (16.16,13.06) and (18.22,11) .. (20.76,11) -- (260.17,11) .. controls (262.71,11) and (264.77,13.06) .. (264.77,15.6) -- (264.77,29.4) .. controls (264.77,31.94) and (262.71,34) .. (260.17,34) -- (20.76,34) .. controls (18.22,34) and (16.16,31.94) .. (16.16,29.4) -- cycle ;

\draw    (130.44,34) -- (130.44,57.5) ;
\draw [shift={(130.44,59.5)}, rotate = 270] [color={rgb, 255:red, 0; green, 0; blue, 0 }  ][line width=0.75]    (10.93,-3.29) .. controls (6.95,-1.4) and (3.31,-0.3) .. (0,0) .. controls (3.31,0.3) and (6.95,1.4) .. (10.93,3.29)   ;
\draw  [fill={rgb, 255:red, 237; green, 237; blue, 237 }  ,fill opacity=1 ] (87.34,154.6) .. controls (87.34,152.06) and (89.4,150) .. (91.94,150) -- (167.95,150) .. controls (170.49,150) and (172.55,152.06) .. (172.55,154.6) -- (172.55,168.4) .. controls (172.55,170.94) and (170.49,173) .. (167.95,173) -- (91.94,173) .. controls (89.4,173) and (87.34,170.94) .. (87.34,168.4) -- cycle ;

\draw    (130.44,126) -- (130.44,146.5) ;
\draw [shift={(130.44,148.5)}, rotate = 270] [color={rgb, 255:red, 0; green, 0; blue, 0 }  ][line width=0.75]    (10.93,-3.29) .. controls (6.95,-1.4) and (3.31,-0.3) .. (0,0) .. controls (3.31,0.3) and (6.95,1.4) .. (10.93,3.29)   ;
\draw    (521.4,128) -- (521.4,210) ;
\draw [shift={(521.4,212)}, rotate = 270] [color={rgb, 255:red, 0; green, 0; blue, 0 }  ][line width=0.75]    (10.93,-3.29) .. controls (6.95,-1.4) and (3.31,-0.3) .. (0,0) .. controls (3.31,0.3) and (6.95,1.4) .. (10.93,3.29)   ;
\draw    (129.44,210.5) -- (129.44,176.5) ;
\draw [shift={(129.44,174.5)}, rotate = 90] [color={rgb, 255:red, 0; green, 0; blue, 0 }  ][line width=0.75]    (10.93,-3.29) .. controls (6.95,-1.4) and (3.31,-0.3) .. (0,0) .. controls (3.31,0.3) and (6.95,1.4) .. (10.93,3.29)   ;
\draw  [fill={rgb, 255:red, 239; green, 195; blue, 195 }  ,fill opacity=1 ] (227.68,146) -- (413.13,146) -- (413.13,196.5) -- (227.68,196.5) -- cycle ;

\draw    (129,174.5) -- (129,208.5) ;
\draw [shift={(129,210.5)}, rotate = 270] [color={rgb, 255:red, 0; green, 0; blue, 0 }  ][line width=0.75]    (10.93,-3.29) .. controls (6.95,-1.4) and (3.31,-0.3) .. (0,0) .. controls (3.31,0.3) and (6.95,1.4) .. (10.93,3.29)   ;

\draw (24.46,62.4) node [anchor=north west][inner sep=0.75pt]  [font=\small,xscale=0.90,yscale=0.90]  {$ \begin{array}{l}
\begin{cases}\partial_tu +\partial_xu +\partial_x^3u =0,\ in\ ( 0,\infty ) \times ( 0,T)\\
u( 0,t) =\textcolor[rgb]{0.82,0.01,0.11}{f( t)} \ \in H^{( s+1) /3}( 0,T)\\
u( x,0) =\textcolor[rgb]{0.29,0.56,0.89}{\phi \ \in \ H}\textcolor[rgb]{0.29,0.56,0.89}{^{s}( 0,\infty )}
\end{cases}
\end{array}$};
\draw (401.4,63.4) node [anchor=north west][inner sep=0.75pt]  [font=\small,xscale=0.90,yscale=0.90]  {$ \begin{array}{l}\begin{cases}
\partial_t\varphi  +\partial_x\varphi  +\partial_x^3\varphi=0\ in\ (0,\infty) \times ( 0,T)\\
\varphi ( 0,t) =\partial_x\varphi( 0,t) =0\ in\ ( 0,T)\\
\varphi ( x,T) =\textcolor[rgb]{0.82,0.01,0.11}{\varphi _{T} \ \in \ H^{-s}( 0,\infty }\textcolor[rgb]{0.82,0.01,0.11}{)}
\end{cases}
\end{array}$};
\draw (304.51,36) node [anchor=north west][inner sep=0.75pt]  [xscale=0.95,yscale=0.95] [align=left] {HUM};
\draw (474,14) node [anchor=north west][inner sep=0.75pt]  [xscale=0.95,yscale=0.95] [align=left] {Adjoint system};
\draw (23.07,14) node [anchor=north west][inner sep=0.75pt]  [xscale=0.95,yscale=0.95] [align=left] {Boundary control in right half-line};
\draw (88.44,152.4) node [anchor=north west][inner sep=0.75pt]  [font=\small,xscale=0.95,yscale=0.95]  {$u( x,T) =\phi _{T}$};
\draw (88.96,238.4) node [anchor=north west][inner sep=0.75pt]  [font=\small,xscale=0.95,yscale=0.95]  {$\left< f( \cdot ) ,\ \partial_x^2\varphi( 0,\cdot )\right> _{H^{( s+1) /3} ,H^{-( s+1) /3} \ } \ =\left< \phi ( \cdot ) ,\varphi ( \cdot ,0)\right> _{H^{s} ,H^{-s}} \ -\left< u( \cdot ,T) ,\varphi _{T}( \cdot )\right> _{H^{s} ,H^{-s}}$};
\draw (141.94,219) node [anchor=north west][inner sep=0.75pt]  [xscale=0.95,yscale=0.95] [align=left] {Necessary and sufficient condition to exact controllability:};
\draw (235.9,151) node [anchor=north west][inner sep=0.75pt]  [xscale=0.95,yscale=0.95] [align=left] {$\displaystyle -\frac{3}{4} < s< \frac{3}{4},\quad \text{with} \ s\neq \frac{1}{2}$};
\end{tikzpicture}
\end{center}

This scheme represents the method of controllability in the right half-line (the left half-line can be done similarly). Note that the necessary and sufficient condition to obtain exact controllability may be seen as an optimality condition for the critical points of the functional $\mathcal{J}: H^{-s}(\R^+_x) \longrightarrow \mathbb{R}$, defined by
$$
\mathcal{J}\left(\varphi_T\right)=\frac{1}{2} \left\|\partial_x^2\varphi(0,\cdot) \right\|_{H^{-(s+1)/3}(0,T)}^{2} - \left< \phi ( \cdot ) ,\varphi ( \cdot ,0)\right> _{H^{s} ,H^{-s}} +\left<  u( \cdot ,T) ,\varphi _{T}( \cdot )\right> _{H^{s} ,H^{-s}},
$$
where $\varphi$ is the solution of \eqref{adjoin}  with final data $\varphi_T \in  H^{-s}(\R^+_x)$. Moreover, we already known that if  $\widehat{\varphi}_T \in  H^{-s}(\R^+_x)$ is a minimizer of $\mathcal{J}$, with $\widehat{\varphi} $ the corresponding solution of \eqref{adjoin}, with final data $\widehat{\varphi}_T$, then $f(t)=\widehat{\partial_x^2\varphi}(0,t)$ is a desired control. Thus, the observability inequality is given by 
$$
\left\|\varphi_T\right\|_{H^{-s}(\R^+_x)}^2 \leq  C \left\|\partial_x^2\varphi(0,\cdot)\right\|_{H^{-\frac{s+1}{3}}(0, T)}^2,
$$
for any  $\varphi_T \in H^{-s}(\R^+_x)$, where $\varphi$ is the solution of the backward system \eqref{adjoin}. To prove the observability inequality, several approaches can be utilized. Our intuition regarding the observability inequality is that it is valid in $-\frac34 < s \leq \frac{3}{2}$, with $s\neq \frac12$, given the favorable trace estimates and smoothing effects exhibited by the solution of the backward system, however, this issue of showing the observability inequality in $H^s$, when $-\frac34 < s \leq \frac{3}{2}$, with $s\neq \frac12$, is still an open problem. 

\subsection*{Acknowledgment} The authors are grateful to the referee for the careful reading of this paper and for the valuable suggestions and comments, which substantially improved the results presented herein. This research was conducted after several visits by the authors to the National University of Colombia (sede Manizales) and the Federal University of Pernambuco, and it was completed during the visit of the authors to the Department of Mathematics at Ewha Womans University, Korea, in September 2024. The authors sincerely thank the host institutions for their generous hospitality.

\subsection*{Data Availability} It does not apply to this article as no new data were created or analyzed in this study.
\subsection*{Conflict of interest} This work has no conflicts of interest.

\appendix

\section{Initial boundary value problem: Half-line cases }\label{app1}

\subsection{Boundary forcing operator}\label{FormulaForcing}
Following the ideas in \cite{Holmer}, we can derive a formula for the solution of the initial-value problem \eqref{h1}. The local existence and uniqueness of solutions of the systems \eqref{adjoin} and \eqref{adjoint_1} is established through Theorem \ref{localwellposedness}. Consider a cut-off function $\theta(t):=\theta$, $\theta \in C_0^{\infty}(\mathbb{R})$. Denote $\theta(t)=\frac{1}{T} \psi\left(\frac{t}{T}\right)$, for $T>0$, such that 
\begin{align*}
\begin{cases}
0 \leq \psi \leq 1, \quad \theta \equiv 1&\text{on $[0,1]$,} \\ 
\psi \equiv 0 & \text{for $|t| \geq 2$},
\end{cases}
\end{align*}
Let us now provide a summary of the Riemann-Liouville fractional integral operator; the reader can refer to \cite{CK, Holmer} for more details.  

Define the function $t_+$ as follows
\[t_+ = t \quad \mbox{if} \quad t > 0, \qquad t_+ = 0  \quad \mbox{if} \quad t \le 0.\]
The tempered distribution $\frac{t_+^{\alpha-1}}{\Gamma(\alpha)}$ is defined as a locally integrable function for Re $\alpha>0$ by
\begin{equation*}
	\left \langle \frac{t_+^{\alpha-1}}{\Gamma(\alpha)},\ f \right \rangle=\frac{1}{\Gamma(\alpha)}\int_0^{\infty} t^{\alpha-1}f(t)dt.
\end{equation*}
It follows that
\begin{equation}\label{gamma}
	\frac{t_+^{\alpha-1}}{\Gamma(\alpha)}=\partial_t^k\left( \frac{t_+^{\alpha+k-1}}{\Gamma(\alpha+k)}\right),
\end{equation}
for all $k\in\mathbb{N}$. Expression  \eqref{gamma} can be used to extend the definition of $\frac{t_+^{\alpha-1}}{\Gamma(\alpha)}$ to all $\alpha \in \mathbb{C}$ in the sense of distributions. A change of contour shows the Fourier transform of $\frac{t_+^{\alpha-1}}{\Gamma(\alpha)}$ is the following one
\begin{equation}\label{transformada}
	\left(\frac{t_+^{\alpha-1}}{\Gamma(\alpha)}\right)^{\widehat{}}(\tau)=e^{-\frac{1}{2}\pi i \alpha}(\tau-i0)^{-\alpha},
\end{equation}
where $(\tau-i0)^{-\alpha}$ is the distributional limit. For $\alpha \notin \mathbb{Z}$, let us rewrite \eqref{transformada} on the following way
\begin{equation}\label{transformada1}
	\left(\frac{t_+^{\alpha-1}}{\Gamma(\alpha)}\right)^{\widehat{}}(\tau)=e^{-\frac12 \alpha \pi i}|\tau|^{-\alpha}\chi_{(0,\infty)}+e^{\frac12 \alpha \pi i}|\tau|^{-\alpha}\chi_{(-\infty,0)}.
\end{equation}
Note that from \eqref{transformada} and \eqref{transformada1}, we have that
$$
(\tau-i0)^{-\alpha} = |\tau|^{-\alpha}\chi_{(0,\infty)}+e^{\alpha \pi i}|\tau|^{-\alpha}\chi_{(-\infty,0)}.
$$
For $f\in C_0^{\infty}(\mathbb{R}^+)$, define $	\mathcal{I}_{\alpha}f$ as
\begin{equation*}
	\mathcal{I}_{\alpha}f=\frac{t_+^{\alpha-1}}{\Gamma(\alpha)}*f,
\end{equation*}
so, for $\mbox{Re }\alpha>0$, follows that
\begin{equation}\label{eq:IO}
	\mathcal{I}_{\alpha}f(t)=\frac{1}{\Gamma(\alpha)}\int_0^t(t-s)^{\alpha-1}f(s) \; ds.
\end{equation}

The following properties easily holds $\mathcal{I}_0f=f$, $\mathcal{I}_1f(t)=\int_0^tf(s) \; ds$, $\mathcal{I}_{-1}f=f'$ and $\mathcal{I}_{\alpha}\mathcal{I}_{\beta}=\mathcal{I}_{\alpha+\beta}$. Moreover, the lemmas below can be found in \cite{Holmer}, and we will omit their proofs.
\begin{lemma}\cite[Lemma 2.1]{Holmer}
	If $f\in C_0^{\infty}(\mathbb{R}^+)$, then $\mathcal{I}_{\alpha}f\in C_0^{\infty}(\mathbb{R}^+)$, for all $\alpha \in \mathbb{C}$.
\end{lemma}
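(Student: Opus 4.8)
The plan is to reduce the case of a general complex exponent to the case $\operatorname{Re}\alpha>0$, where $\mathcal{I}_\alpha f$ is an ordinary convergent integral, and then to transport both smoothness and the half-line support property through the distributional identity \eqref{gamma}. So first I would treat $\operatorname{Re}\alpha>0$ directly. After extending $f$ by zero to all of $\mathbb{R}$, \eqref{eq:IO} can be rewritten as $\mathcal{I}_\alpha f(t)=\frac{1}{\Gamma(\alpha)}\int_0^\infty u^{\alpha-1}f(t-u)\,du$. Since $f\in C_0^\infty(\mathbb{R}^+)$ has support in some $[a,b]\subset(0,\infty)$, for each fixed $t$ the integrand is supported in the bounded set $u\in[t-b,t-a]\cap[0,\infty)$, while near $u=0$ the factor $u^{\alpha-1}$ is integrable because $\operatorname{Re}(\alpha-1)>-1$; hence the integral converges absolutely. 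Differentiating under the integral sign — justified by dominated convergence, since for $t$ in a compact set the integrand is dominated by $u^{\operatorname{Re}\alpha-1}\|f^{(m)}\|_{\infty}\mathbf{1}_{[0,C]}(u)$, an integrable function — yields $\partial_t^m\mathcal{I}_\alpha f=\mathcal{I}_\alpha f^{(m)}$ for every $m$, so $\mathcal{I}_\alpha f\in C^\infty$. For the support I would invoke the convolution structure: $\tfrac{t_+^{\alpha-1}}{\Gamma(\alpha)}$ is supported in $[0,\infty)$ and $f$ in $[a,b]$, so $\mathcal{I}_\alpha f$ is supported in $[0,\infty)+[a,b]=[a,\infty)\subset(0,\infty)$; equivalently $f(t-u)=0$ unless $t\ge a$, so $\mathcal{I}_\alpha f$ vanishes on $[0,a)$. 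Smoothness together with this half-line support gives $\mathcal{I}_\alpha f\in C_0^\infty(\mathbb{R}^+)$.

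Next I would dispose of an arbitrary $\alpha\in\mathbb{C}$ using \eqref{gamma}. Given $\alpha$, choose $k\in\mathbb{N}$ with $\operatorname{Re}(\alpha+k)>0$. Because differentiation commutes with convolution, \eqref{gamma} gives
$\mathcal{I}_\alpha f=\bigl(\partial_t^k \tfrac{t_+^{\alpha+k-1}}{\Gamma(\alpha+k)}\bigr)*f=\partial_t^k\mathcal{I}_{\alpha+k}f$; alternatively this is the semigroup law $\mathcal{I}_\alpha=\mathcal{I}_{-k}\mathcal{I}_{\alpha+k}$ combined with $\mathcal{I}_{-1}f=f'$. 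By the previous step $\mathcal{I}_{\alpha+k}f\in C_0^\infty(\mathbb{R}^+)$, and classical differentiation preserves both smoothness and the support in $[a,\infty)$; in particular the distributional derivatives of $\mathcal{I}_{\alpha+k}f$ coincide with its classical ones, so $\mathcal{I}_\alpha f=\partial_t^k\mathcal{I}_{\alpha+k}f$ is again an element of $C_0^\infty(\mathbb{R}^+)$. Since $\alpha$ was arbitrary, this settles every exponent.

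The main obstacle here is conceptual rather than computational: for $\operatorname{Re}\alpha\le 0$ the symbol $\tfrac{t_+^{\alpha-1}}{\Gamma(\alpha)}$ is only a tempered distribution (defined precisely through \eqref{gamma}), so one must verify that $\mathcal{I}_\alpha f$ is a genuine $C^\infty$ function and not merely a distribution. This is exactly what the factorization $\mathcal{I}_\alpha f=\partial_t^k\mathcal{I}_{\alpha+k}f$ resolves, by displaying $\mathcal{I}_\alpha f$ as the classical derivative of a smooth function. The only remaining points requiring care are the justification of differentiation under the integral sign in the case $\operatorname{Re}\alpha>0$ and the commutation of the distributional derivative with the convolution, both of which are standard facts about convolutions of distributions with test functions.
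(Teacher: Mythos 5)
Your proof is correct and follows essentially the same route as the source: the paper itself omits the proof of this lemma, deferring to \cite{Holmer}, and the argument there is exactly yours --- absolute convergence and differentiation under the integral sign for the convolution when $\operatorname{Re}\alpha>0$, support contained in $[0,\infty)+\supp f$ by the convolution support theorem, and reduction of arbitrary $\alpha\in\mathbb{C}$ to that case through the identity $\mathcal{I}_\alpha f=\partial_t^k\mathcal{I}_{\alpha+k}f$ coming from \eqref{gamma}. One notational caveat: in this literature $C_0^\infty(\mathbb{R}^+)$ denotes smooth functions on $\mathbb{R}$ with support contained in $[0,\infty)$, \emph{not} compactly supported functions with $\supp f\subset[a,b]\subset(0,\infty)$ as you assume --- under the compact-support reading the lemma would already be false for $\alpha=1$, since $\mathcal{I}_1f(t)=\int_0^t f(s)\,ds$ is eventually constant and generally nonzero --- but your argument survives this correction verbatim, as it only uses $\supp f\subset[0,\infty)$ (which makes the $u$-integral range over a compact set for each fixed $t$) together with local bounds on $f$ and its derivatives, and your own conclusion (smoothness plus half-line support, with no compactness claimed for $\mathcal{I}_\alpha f$) is precisely membership in $C_0^\infty(\mathbb{R}^+)$ in the intended sense.
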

\begin{lemma}\cite[Lemma 5.3]{Holmer}
	If $0\leq \mathrm{Re} \ \alpha <\infty$ and $s\in \mathbb{R}$, then $\|\mathcal{I}_{-\alpha}h\|_{H_0^s(\mathbb{R}^+)}\leq c \|h\|_{H_0^{s+\alpha}(\mathbb{R}^+)}$, where $c=c(\alpha)$.
\end{lemma}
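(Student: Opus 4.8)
The plan is to prove the estimate on the Fourier side, reducing it to a pointwise bound for the symbol of the operator $\mathcal{I}_{-\alpha}$. First I would reduce to the dense class $h\in C_0^\infty(\mathbb{R}^+)$: by the immediately preceding \cite[Lemma 2.1]{Holmer}, $\mathcal{I}_{-\alpha}h\in C_0^\infty(\mathbb{R}^+)$ for every $\alpha\in\mathbb{C}$, so $\mathcal{I}_{-\alpha}h$ is again supported in $[0,\infty)$. Consequently its $H_0^s(\mathbb{R}^+)$ norm coincides with the $H^s(\mathbb{R})$ norm of its extension by zero, and the inequality proved on $C_0^\infty(\mathbb{R}^+)$ extends to all of $H_0^{s+\alpha}(\mathbb{R}^+)$ by density.

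Next I would compute the symbol. Since $\mathcal{I}_{-\alpha}h=\frac{t_+^{-\alpha-1}}{\Gamma(-\alpha)}*h$, applying \eqref{transformada} with $\alpha$ replaced by $-\alpha$ gives $\widehat{\mathcal{I}_{-\alpha}h}(\tau)=e^{\frac12\pi i\alpha}(\tau-i0)^{\alpha}\,\hat h(\tau)$, so the problem reduces to controlling the multiplier $m(\tau)=(\tau-i0)^{\alpha}$. Using the explicit distributional form $(\tau-i0)^{\alpha}=|\tau|^{\alpha}\chi_{(0,\infty)}+e^{-\alpha\pi i}|\tau|^{\alpha}\chi_{(-\infty,0)}$ (the $\alpha\to-\alpha$ version of the identity displayed just after \eqref{transformada1}), together with $\big||\tau|^{\alpha}\big|=|\tau|^{\operatorname{Re}\alpha}$ and $|e^{-\alpha\pi i}|=e^{\pi\operatorname{Im}\alpha}$, I obtain the pointwise bound $|m(\tau)|\le\max\{1,e^{\pi\operatorname{Im}\alpha}\}\,|\tau|^{\operatorname{Re}\alpha}$.

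The decisive observation is then elementary: because $\operatorname{Re}\alpha\ge 0$ and $|\tau|\le\langle\tau\rangle$ for all $\tau$, we have $|\tau|^{\operatorname{Re}\alpha}\le\langle\tau\rangle^{\operatorname{Re}\alpha}$ uniformly — there is no singularity at $\tau=0$ precisely because the exponent is nonnegative. Combining with the symbol bound yields $\langle\tau\rangle^{s}|\widehat{\mathcal{I}_{-\alpha}h}(\tau)|\le c\,\langle\tau\rangle^{s+\operatorname{Re}\alpha}|\hat h(\tau)|$ with $c=\max\{1,e^{\pi\operatorname{Im}\alpha}\}$; squaring and integrating in $\tau$ gives $\|\mathcal{I}_{-\alpha}h\|_{H^s(\mathbb{R})}\le c\,\|h\|_{H^{s+\operatorname{Re}\alpha}(\mathbb{R})}$. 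Finally I would observe that the Sobolev norm depends on $\alpha$ only through its real part, since $|\langle\tau\rangle^{s+\alpha}|=\langle\tau\rangle^{s+\operatorname{Re}\alpha}$, so $H_0^{s+\alpha}(\mathbb{R}^+)$ and $H_0^{s+\operatorname{Re}\alpha}(\mathbb{R}^+)$ carry identical norms and the claimed estimate follows with $c=c(\alpha)$.

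The only genuinely delicate point — and the step I would be most careful about — is the identification of the $H_0^s(\mathbb{R}^+)$ norm of a function supported in $[0,\infty)$ with the full-line $H^s(\mathbb{R})$ norm of its zero-extension, uniformly in $s\in\mathbb{R}$ (including negative $s$, where $H_0^s$ is defined by duality). I would handle this by staying on the dense class $C_0^\infty(\mathbb{R}^+)$, where both $h$ and $\mathcal{I}_{-\alpha}h$ are smooth and compactly supported in $(0,\infty)$, so the zero-extension is unambiguous and the Fourier characterization of $H^s(\mathbb{R})$ applies directly; the general case then follows by density and the duality definition of $H_0^s$ for $s<0$. Everything else is the one-line multiplier estimate above.
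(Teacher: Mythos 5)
Your proposal is correct, and it is essentially the proof of this statement: the paper itself omits the argument (it states explicitly that the lemmas are quoted from \cite{Holmer} with proofs omitted), and Holmer's Lemma 5.3 is proved exactly by your route, namely computing $\widehat{\mathcal{I}_{-\alpha}h}(\tau)=e^{\frac12\pi i\alpha}(\tau-i0)^{\alpha}\hat h(\tau)$ from \eqref{transformada} and bounding the multiplier by $c(\alpha)\,|\tau|^{\operatorname{Re}\alpha}\le c(\alpha)\,\langle\tau\rangle^{\operatorname{Re}\alpha}$, which uses $\operatorname{Re}\alpha\ge 0$ precisely where you say it does. Your handling of the remaining point — working on the dense class $C_0^\infty(\mathbb{R}^+)$, where Lemma 2.1 of \cite{Holmer} guarantees $\mathcal{I}_{-\alpha}h$ stays in that class so that the $H_0^s(\mathbb{R}^+)$ norm is unambiguously the full-line $H^s(\mathbb{R})$ norm of the zero-extension, then concluding by density — is also the standard way this is made rigorous, so there is nothing to add.
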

\begin{lemma}\cite[Lemma 5.4]{Holmer}
	If $0\leq \mathrm{Re}\ \alpha <\infty$, $s\in \mathbb{R}$ and $\mu\in C_0^{\infty}(\mathbb{R})$, then
	$\|\mu\mathcal{I}_{\alpha}h\|_{H_0^s(\mathbb{R}^+)}\leq c \|h\|_{H_0^{s-\alpha}(\mathbb{R}^+)},$ where $c=c(\mu, \alpha)$.
\end{lemma}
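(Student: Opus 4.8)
The plan is to realize $\mathcal{I}_{\alpha}$ as a Fourier multiplier and then control two frequency regimes separately, exploiting the support condition to tame the low frequencies. From the definition $\mathcal{I}_{\alpha}h=\frac{t_+^{\alpha-1}}{\Gamma(\alpha)}*h$ together with \eqref{transformada}, the operator acts on the Fourier side by $\widehat{\mathcal{I}_{\alpha}h}(\tau)=e^{-\frac12\pi i\alpha}(\tau-i0)^{-\alpha}\hat h(\tau)$, a multiplier whose modulus is comparable to $|\tau|^{-\mathrm{Re}\,\alpha}$ away from the origin (so the effective Sobolev gain is $\mathrm{Re}\,\alpha$, which is what is meant by the index $s-\alpha$). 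Since both $\frac{t_+^{\alpha-1}}{\Gamma(\alpha)}$ and $h\in H_0^{s-\alpha}(\mathbb{R}^+)$ are supported in $[0,\infty)$, their convolution $\mathcal{I}_{\alpha}h$ is again supported in $[0,\infty)$; hence $\mu\mathcal{I}_{\alpha}h$ lies in $H_0^{s}(\mathbb{R}^+)$ automatically, and it suffices to estimate its $H^{s}(\mathbb{R})$-norm.

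First I would dispose of the high-frequency part. On $\{|\tau|\geq 1\}$ the multiplier satisfies $\langle\tau\rangle^{s}\,|(\tau-i0)^{-\alpha}|\lesssim\langle\tau\rangle^{\,s-\mathrm{Re}\,\alpha}$, so this portion of $\mathcal{I}_{\alpha}h$ is bounded in $H^{s}(\mathbb{R})$ by $\|h\|_{H^{s-\alpha}_0(\mathbb{R}^+)}$; multiplication by $\mu\in C_0^{\infty}(\mathbb{R})$ is bounded on $H^{s}(\mathbb{R})$, so it does no harm here. The entire content of the lemma is in the low-frequency regime, where $(\tau-i0)^{-\alpha}$ is singular at $\tau=0$ once $\mathrm{Re}\,\alpha>0$ and a naive multiplier bound fails.

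The key step, and the main obstacle, is precisely this low-frequency analysis. Here I would use that, by the Paley--Wiener theorem, the support condition $\mathrm{supp}\,h\subset[0,\infty)$ makes $\hat h$ the boundary value of a function holomorphic in the lower half-plane $\{\mathrm{Im}\,\tau<0\}$; the chosen branch of $(\tau-i0)^{-\alpha}$ enjoys the same property, so $\widehat{\mathcal{I}_{\alpha}h}$ extends holomorphically there as well. Representing multiplication by $\mu$ as convolution with the Schwartz function $\hat\mu$ and deforming the contour slightly into $\{\mathrm{Im}\,\tau<0\}$ to skirt the point $\tau=0$ then produces a function that is smooth and bounded across the origin, the singular factor being absorbed by the rapid decay of $\hat\mu$ together with the compact support of $\mu$. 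This controls the near-zero frequencies of $\mu\mathcal{I}_{\alpha}h$ by $\|h\|_{H^{s-\alpha}_0(\mathbb{R}^+)}$, and combining with the high-frequency bound yields $\|\mu\mathcal{I}_{\alpha}h\|_{H^{s}_0(\mathbb{R}^+)}\leq c\,\|h\|_{H^{s-\alpha}_0(\mathbb{R}^+)}$ with $c=c(\mu,\alpha)$.

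As a fallback I would reduce to the already-available estimate for $\mathcal{I}_{-\alpha}$ in \cite[Lemma 5.3]{Holmer}, using the semigroup identity $\mathcal{I}_{\alpha}\mathcal{I}_{\beta}=\mathcal{I}_{\alpha+\beta}$: one proves the bound for $\mathrm{Re}\,\alpha\in[0,1)$ and then bootstraps through $\mathcal{I}_{\alpha}=\mathcal{I}_1\mathcal{I}_{\alpha-1}$, where $\mathcal{I}_1$ is ordinary integration. The catch with this route is that the natural duality brings in the adjoint of $\mathcal{I}_{\alpha}$, which convolves against $\frac{t_-^{\alpha-1}}{\Gamma(\alpha)}$ and so reverses the support orientation; tracking how the cutoff $\mu$ restores the $H_0^{s}(\mathbb{R}^+)$ framework is delicate, which is exactly why the cutoff appears in the statement. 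For that reason the Fourier/Paley--Wiener argument above seems the more transparent path, and I expect the contour deformation near $\tau=0$ to be the only genuinely nontrivial point.
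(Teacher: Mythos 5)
On the comparison itself there is nothing to match: the paper does not prove this lemma. It is quoted verbatim from Holmer, and the surrounding text states explicitly that the proofs of these preliminary lemmas are omitted, so your proposal can only be judged on its own merits. On that basis it is essentially correct, and it identifies the right mechanism: one-sided support of $h$ and of the kernel $\frac{t_+^{\alpha-1}}{\Gamma(\alpha)}$ makes $\widehat{\mathcal{I}_{\alpha}h}$ the boundary value of a function holomorphic in $\{\operatorname{Im}\tau<0\}$, and the compact support of $\mu$ is what lets you move off the singularity of $(\tau-i0)^{-\alpha}$ at the origin. Two repairs are needed to make this airtight. First, a ``slight'' detour skirting $\tau=0$ is the wrong move: on a small detour $z^{-\alpha}$ is still large, and justifying the deformation with only $L^2$ boundary control of $\hat h$ is awkward. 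The clean implementation is a uniform shift to a full line $\operatorname{Im}z=-\gamma$, which is nothing more than the elementary identity $\mu\,\mathcal{I}_{\alpha}h=(\mu e^{\gamma t})\cdot\bigl(e^{-\gamma t}\mathcal{I}_{\alpha}h\bigr)$: here $\mu e^{\gamma t}\in C_0^{\infty}$ with $\widehat{\mu e^{\gamma t}}(\tau)=\hat\mu(\tau+i\gamma)$ still rapidly decaying, while $e^{-\gamma t}\mathcal{I}_{\alpha}h$ is the convolution of $e^{-\gamma t}h$ with the damped kernel $e^{-\gamma t}\frac{t_+^{\alpha-1}}{\Gamma(\alpha)}$, whose Fourier transform $e^{-\frac{1}{2}\pi i\alpha}(\tau-i\gamma)^{-\alpha}$ is smooth on all of $\mathbb{R}$ and of size $\lesssim_{\gamma,\alpha}\langle\tau\rangle^{-\operatorname{Re}\alpha}$. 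With this in hand your two-regime splitting is not even needed: using $\langle\tau\rangle^{s}\le\langle\tau-\sigma\rangle^{|s|}\langle\sigma\rangle^{s}$ one gets
\begin{equation*}
\langle\tau\rangle^{s}\bigl|\widehat{\mu\,\mathcal{I}_{\alpha}h}(\tau)\bigr|\lesssim\int_{\mathbb{R}}\langle\tau-\sigma\rangle^{|s|}\bigl|\hat\mu(\tau-\sigma+i\gamma)\bigr|\,\langle\sigma\rangle^{s-\operatorname{Re}\alpha}\bigl|\widehat{e^{-\gamma t}h}(\sigma)\bigr|\,d\sigma,
\end{equation*}
and Young's inequality ($L^1 * L^2\subset L^2$) together with the fact that multiplication by $\psi(t)e^{-\gamma t}$ (with $\psi$ smooth, $\psi\equiv 1$ on $\operatorname{supp}h\subset[0,\infty)$, so that all derivatives are bounded) is bounded on every $H^{\sigma}(\mathbb{R})$ yields the claim for all $s\in\mathbb{R}$ at once. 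Second, a caution about your decomposition as written: the low-frequency projection of $\mathcal{I}_{\alpha}h$ alone is in general not in $L^2$ once $\operatorname{Re}\alpha\ge\frac12$ and $\hat h(0)\neq 0$ (take $\alpha=1$: $\mathcal{I}_1h$ tends to $\int h\neq 0$ at $+\infty$), so the low-frequency estimate must be run on $\widehat{\mu\,\mathcal{I}_{\alpha}h}$ directly and never on $\mathcal{I}_{\alpha}h$ before the cutoff acts — your argument in fact respects this, but it deserves to be said explicitly. Finally, two cosmetic points you half-note already: the index $s-\alpha$ must be read as $s-\operatorname{Re}\alpha$, and the constant inherits a factor of order $e^{\pi|\operatorname{Im}\alpha|}$ from $\bigl|(\sigma-i\gamma)^{-\alpha}\bigr|$, consistent with $c=c(\mu,\alpha)$. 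Your fallback route via $\mathcal{I}_{\alpha}=\mathcal{I}_1\mathcal{I}_{\alpha-1}$ and duality is indeed more delicate for exactly the support-reversal reason you give, so the weighted/contour argument is the better choice.
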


\subsubsection{\bf{Oscillatory integral}} In this subsection, we will define the oscillatory integral, which is the key to defining, in the next section, the Duhamel boundary forcing operator. The Airy function is
\begin{equation}\label{airyfunction}
A(x)=\frac{1}{2 \pi} \int_{\xi} e^{i x \xi} e^{i \xi^3} d \xi
\end{equation}
From \cite{Holmer}, the Airy function has the following properties:
\begin{itemize}
\item[i.] $A(x)$ is a smooth function with the asymptotic properties
$$
\begin{aligned}
A(x) & \sim c_1 x^{-1 / 4} e^{-c_2 x^{3 / 2}}\left(1+O\left(x^{-3 / 4}\right)\right) \quad \text { as } x \rightarrow+\infty
\end{aligned}
$$
and
$$
\begin{aligned}
A(-x) & \sim c_2 x^{-1 / 4} \cos \left(c_2 x^{3 / 2}-\frac{\pi}{4}\right)\left(1+O\left(x^{-3 / 4}\right)\right) \quad \text { as } x \rightarrow+\infty,
\end{aligned}
$$
where $c_1,c_2>0$.
\item[ii.] We can compute:
\begin{equation*}
\begin{split}
A(0)&=\frac{1}{2 \pi} \int_{\xi} e^{i \xi^3} d \xi=\frac{1}{6 \pi} \int_\eta \eta^{-2 / 3} e^{i \eta} d \eta=\frac{\frac{\sqrt{3}}{2} \Gamma\left(\frac{1}{3}\right)}{3 \pi}=\frac{1}{3 \Gamma\left(\frac{2}{3}\right)},\\
A^{\prime}(0)&=\frac{1}{2 \pi} \int_{\xi} i \xi e^{i \xi^3} d \xi=-\frac{1}{3 \Gamma\left(\frac{1}{3}\right)},\quad \text{and}\\
&\int_0^{+\infty} A(y) d y=\frac{1}{3}.
\end{split}
\end{equation*}
\end{itemize}
On the other hand, consider the following group as
\begin{equation}\label{group}
e^{-t (\partial+\partial_x^3)} \phi(x)=\frac{1}{2 \pi} \int_{\xi} e^{i x \xi} e^{i t (\xi+\xi^3)} \hat{\phi}(\xi) d \xi,
\end{equation}
so that
\begin{equation}\label{equationgroup}
\begin{cases}\left(\partial_t+\partial_x+\partial_x^3\right)\left[e^{-t \partial_x^3} \phi\right](x, t)=0, & \text { for }(x, t) \in \mathbb{R} \times \mathbb{R},\\ {\left[e^{-t \partial_x^3} \phi\right](x, 0)=\phi(x)}, & \text { for } x \in \mathbb{R}.\end{cases}
\end{equation}
On the other hand, we define the Duhamel inhomogeneous solution operator $\mathscr{D}$ as
\begin{equation}\label{inhomo}
\mathscr{D} w(x, t)=\int_0^t e^{-\left(t-t^{\prime}\right) (\partial_x + \partial_x^3)} w\left(x, t^{\prime}\right) d t^{\prime},
\end{equation}
so that
\begin{equation}\label{inhomoequation}
\begin{cases}\left(\partial_t+\partial_x+\partial_x^3\right) \mathscr{D} w(x, t)=w(x, t), & \text { for }(x, t) \in \mathbb{R} \times \mathbb{R} \\ \mathscr{D} w(x, 0)=0, & \text { for } x \in \mathbb{R} .\end{cases}
\end{equation}
We now introduce the Duhamel boundary forcing operator (see \cite{CK}). For $f \in C_0^{\infty}\left(\mathbb{R}^{+}\right)$, let
\begin{equation}\label{eqnew17}
\begin{aligned}
\mathscr{L}^0 f(x, t) & =3 \int_0^t e^{-\left(t-t^{\prime}\right) \partial_x^3} \delta_0(x)\mathcal{I}_{-2 / 3} f\left(t^{\prime}\right) d t^{\prime} \\
& =3 \int_0^t A\left(\frac{x}{\left(t-t^{\prime}\right)^{1 / 3}}\right) \frac{\mathcal{I}_{-2 / 3} f\left(t^{\prime}\right)}{\left(t-t^{\prime}\right)^{1 / 3}} d t^{\prime}
\end{aligned}
\end{equation}
so that
$$
\begin{cases}\left(\partial_t+\partial_x^3\right) \mathscr{L}^0 f(x, t)=3 \delta_0(x) \mathcal{I}_{-2 / 3} f(t), & \text { for }(x, t) \in \mathbb{R} \times \mathbb{R}, \\ \mathscr{L}^0 f(x, 0)=0, & \text { for } x \in \mathbb{R} .\end{cases}
$$
We define {\em the Duhamel boundary forcing operator class} for $-1<\lambda<1$, and $f \in C_0^{\infty}\left(\mathbb{R}^{+}\right)$ as
\begin{equation}\label{forcingmenos}
\begin{aligned}
\mathscr{L}_{-}^\lambda f(x, t) & =\left[\frac{x_{+}^{\lambda-1}}{\Gamma(\lambda)} * \mathscr{L}^0\left(\mathcal{I}_{-\lambda / 3} f\right)(-, t)\right](x) \\
&=\frac{1}{\Gamma(\lambda)} \int_{-\infty}^x(y-x)^{\lambda-1} \mathscr{L}^0\left(\mathcal{I}_{-\lambda / 3} f\right)(y, t) d y
\\
&=3 \frac{x_{+}^{(\lambda+3)-1}}{\Gamma(\lambda+3)} \mathcal{I}_{-\frac{2}{3}-\frac{\lambda}{3}} f(t)-\int_{-\infty}^x \frac{(y-x)^{(\lambda+3)-1}}{\Gamma(\lambda+3)} \mathscr{L}^0\left(\partial_t \mathcal{I}_{-\frac{\lambda}{3}} f\right)(y, t) d y
\end{aligned}
\end{equation}
and, with $\frac{x_{-}^{\lambda-1}}{\Gamma(\lambda)}=e^{i \pi \lambda(-x)_{+}^{\lambda-1}} \frac{\Gamma(\lambda)}{(\lambda)}$ define
\begin{equation}\label{forcingmas}
\begin{aligned}
\mathscr{L}_{+}^\lambda f(x, t) & =\left[\frac{x_{-}^{\lambda-1}}{\Gamma(\lambda)} * \mathscr{L}^0\left(\mathcal{I}_{-\lambda / 3} f\right)(-, t)\right](x) \\
&=\frac{1}{\Gamma(\lambda)} \int_x^{\infty}(y-x)^{\lambda-1} \mathscr{L}^0\left(\mathcal{I}_{-\lambda / 3} f\right)(y, t) d y
\\
& =3 \frac{x_{-}^{(\lambda+3)-1}}{\Gamma(\lambda+3)}\mathcal{I}_{-\frac{2}{3}-\frac{\lambda}{3}} f(t)+e^{i \pi \lambda} \int_{-\infty}^x \frac{(y-x)^{(\lambda+3)-1}}{\Gamma(\lambda+3)} \mathscr{L}^0\left(\partial_t\mathcal{I}_{-\frac{\lambda}{3}} f\right)(y, t) d y .
\end{aligned}
\end{equation}
It is straightforward
from these definitions, in the sense of distributions
$$
\left(\partial_t+\partial_x+\partial_x^3\right) \mathscr{L}_{-}^\lambda f(x, t)=3 \frac{x_{+}^{\lambda-1}}{\Gamma(\lambda)} \mathcal{I}_{-\frac{2}{3}-\frac{\lambda}{3}} f(t) + 3( \lambda+2)\frac{x_{+}^{\lambda+1}}{\Gamma(\lambda+3)} \mathcal{I}_{-\frac{2}{3}-\frac{\lambda}{3}} f(t)
$$
and
$$
\left(\partial_t+\partial_x+\partial_x^3\right) \mathscr{L}_{+}^\lambda f(x, t)=3 \frac{x_{-}^{\lambda-1}}{\Gamma(\lambda)} \mathcal{I}_{-\frac{2}{3}-\frac{\lambda}{3}} f(t)+3(\lambda+2) \frac{x_{-}^{\lambda+1}}{\Gamma(\lambda+3)} \mathcal{I}_{-\frac{2}{3}-\frac{\lambda}{3}} f(t) .
$$

\subsubsection{\bf{Solution of the systems}} To address the nonlinear problem \eqref{h1} with given data $f$ and $\phi$, take $-1<\lambda<1$, we set
$$
u(x, t)=\theta(t) e^{-t (\partial_x+\partial_x^3)} \phi(x)-\frac{1}{2} \theta(t) \mathscr{D \partial _ { x }} u^2(x, t)+\theta(t) \mathscr{L}_{+}^\lambda h(x, t),
$$
where
\begin{equation}\label{boundaryforcing2}
h(t)=e^{-\pi i \lambda}\left[f(t)-\left.\theta(t) e^{-t(\partial_x+ \partial_x^3)} \phi\right|_{x=0}+\frac{1}{2} \theta(t) \mathscr{D} \partial_x u^2(0, t)\right] .
\end{equation}
Then
\begin{equation*}
\begin{split}
\left(\partial_t+\partial_x+\partial_x^3\right) u(x, t)=&-\frac{1}{2} \partial_x u^2(x, t)+3 \frac{x_{-}^{\lambda-1}}{\Gamma(\lambda)}\theta(t) \mathcal{I}_{-\frac{2}{3}-\frac{\lambda}{3}} f(t)\\&+3(\lambda+2) \frac{x_{-}^{\lambda+1}}{\Gamma(\lambda+3)} \theta(t) \mathcal{I}_{-\frac{2}{3}-\frac{\lambda}{3}} f(t).
\end{split}
\end{equation*}
Due to the support properties of  $x_{-}^{\lambda\pm 1}$ and \eqref{boundaryforcing2}, we have that $u$ is the solution of \eqref{h1}.

Additionally, if we take $-1<\lambda_1, \lambda_2<1, \lambda_1 \neq \lambda_2$, and set
$$
u(x, t)=\theta(t) e^{-t (\partial_x+\partial_x^3)} \phi(x)-\frac{1}{2} \theta(t) \mathscr{D} \partial_x u^2(x, t)+\theta(t) \mathscr{L}_{-}^{\lambda_1} h_1(x, t)+\theta(t) \mathscr{L}_{-}^{\lambda_2} h_2(x, t),
$$
where
\begin{equation}\label{bondaryforcing1}
\left[\begin{array}{l}
h_1(t) \\
h_2(t)
\end{array}\right]=M\left[\begin{array}{c}
g_1(t)-\left.\theta(t) e^{-t (\partial_x+ \partial_x^3)} \phi\right|_{x=0}+\frac{1}{2} \theta(t) \mathscr{D} \partial_x u^2(0, t) \\
\theta(t) \mathcal{I}_{1 / 3}\left(g_2-\left.\theta \partial_x e^{-t (\partial_x+\partial_x^3)} \phi\right|_{x=0}+\frac{1}{2} \theta \partial_x \mathscr{D} \partial_x u^2(0, \cdot)\right)(t)
\end{array}\right] .
\end{equation}
with 
\begin{equation}\label{matrixA}
M=\frac{1}{2 \sqrt{3} \sin \left[\frac{\pi}{3}\left(\lambda_2-\lambda_1\right)\right]}\left[\begin{array}{cc}
\sin \left(\frac{\pi}{3} \lambda_2-\frac{\pi}{6}\right) & -\sin \left(\frac{\pi}{3} \lambda_2+\frac{\pi}{6}\right) \\
-\sin \left(\frac{\pi}{3} \lambda_1-\frac{\pi}{6}\right) & \sin \left(\frac{\pi}{3} \lambda_1+\frac{\pi}{6}\right),
\end{array}\right].
\end{equation}
We have that, in the sense of distribution, $u$ satisfies 
\begin{equation*}
\begin{split}
(\partial_t + \partial_x+\partial_x^3)u(x,t)= &-\frac12 \partial_x u^2(x,t) + 3 \frac{x_{+}^{\lambda_1-1}}{\Gamma(\lambda_1)} \mathcal{I}_{-\frac{2}{3}-\frac{\lambda_1}{3}} h_1(t) + 3( \lambda_1+2)\frac{x_{+}^{\lambda_1+1}}{\Gamma(\lambda_1+3)} \mathcal{I}_{-\frac{2}{3}-\frac{\lambda_1}{3}} h_1(t) \\
&+3 \frac{x_{+}^{\lambda_2-1}}{\Gamma(\lambda_2)} \mathcal{I}_{-\frac{2}{3}-\frac{\lambda_2}{3}} h_2(t) + 3( \lambda_2+2)\frac{x_{+}^{\lambda_2+1}}{\Gamma(\lambda_2+3)} \mathcal{I}_{-\frac{2}{3}-\frac{\lambda_2}{3}} d_2(t).
\end{split}
\end{equation*}
Due to the support properties of  $x_{+}^{\lambda\pm 1}$ and \eqref{bondaryforcing1}, we have that $u$ is the solution of \eqref{h2}.

\subsubsection{\bf{Main estimates}} The operator $e^{-t (\partial_x + \partial_x^3)}$ was defined above in \eqref{group} satisfying \eqref{equationgroup}.
Thus, the following lemma holds.
\begin{lemma}\cite[Lemma 5.5]{Holmer}\label{estimatesgroup}
Let $s \in \mathbb{R}$. Then:
\begin{enumerate}
\item[(a)] (Space traces) $\left\|e^{-t (\partial_x + \partial_x^3)} \phi(x)\right\|_{C\left(\mathbb{R}_t ; H_x^s\right)} \leq c\|\phi\|_{H^s}$;
\item[(b)] (Time traces) $\left\|\theta(t) e^{-t (\partial_x + \partial_x^3)} \phi(x)\right\|_{C\left(\mathbb{R}_x ; H_t^{\frac{s+1}{3}}\right)} \leq c\|\phi\|_{H^s}$;
\item[(c)] (Derivative time traces) $\left\|\theta(t) \partial_x e^{-t (\partial_x + \partial_x^3)} \phi(x)\right\|_{C\left(\mathbb{R}_x ; H_t^{\frac{s}{3}}\right)} \leq c\|\phi\|_{H^s}$;
\item[(d)] (Bourgain space estimate) If $0<b<1$ and $0<\alpha<1$, then $$\left\|\theta(t) e^{-t (\partial_x + \partial_x^3)} \phi(x)\right\|_{X_{s, b} \cap D_\alpha} \leq c\|\theta\|_{H^1}\|\phi\|_{H^s},$$ where $c$ is independent of $\theta$.
\end{enumerate}
\end{lemma}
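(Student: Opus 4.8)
The plan is to derive all four estimates directly from the oscillatory-integral representation \eqref{group}, $e^{-t(\partial_x+\partial_x^3)}\phi(x)=\frac{1}{2\pi}\int_\xi e^{ix\xi}e^{it\omega(\xi)}\hat\phi(\xi)\,d\xi$ with dispersion symbol $\omega(\xi):=\xi+\xi^3$, using only Plancherel's theorem and the change of variables $\tau=\omega(\xi)$. The two arithmetic facts that drive everything are that $\omega'(\xi)=1+3\xi^2\ge1$, so $\omega:\mathbb{R}\to\mathbb{R}$ is a global increasing diffeomorphism, and that for $|\xi|>1$ one has $\langle\omega(\xi)\rangle\sim\langle\xi\rangle^3$ and $\omega'(\xi)\sim\langle\xi\rangle^2$. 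For part (a) I would observe that for each fixed $t$ the operator is the Fourier multiplier $e^{it\omega(\xi)}$ of modulus one, hence an isometry on $H^s_x$, so $\|e^{-t(\partial_x+\partial_x^3)}\phi\|_{H^s_x}=\|\phi\|_{H^s}$; continuity of $t\mapsto e^{-t(\partial_x+\partial_x^3)}\phi$ in $H^s_x$ follows from dominated convergence applied to $\int\langle\xi\rangle^{2s}|e^{it\omega(\xi)}-e^{it_0\omega(\xi)}|^2|\hat\phi(\xi)|^2\,d\xi$, yielding the $C(\mathbb{R}_t;H^s_x)$ bound with $c=1$.

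The substance lies in the Kato smoothing estimates (b) and (c). Fixing $x$ and splitting the frequency into $|\xi|\le1$ and $|\xi|>1$, I would compute, for the high-frequency piece, the temporal Fourier transform by the substitution $\tau=\omega(\xi)$: treating $e^{it\omega(\xi)}$ as the carrier, $u_{hi}(x,\cdot)$ has $t$-transform $e^{ix\xi(\tau)}\hat\phi(\xi(\tau))/\omega'(\xi(\tau))$ supported on $|\xi(\tau)|>1$. Since $|e^{ix\xi}|=1$, Plancherel and reverting the substitution give, uniformly in $x$,
\[
\|u_{hi}(x,\cdot)\|_{H^{\frac{s+1}{3}}_t}^2=\int_{|\xi|>1}\langle\omega(\xi)\rangle^{2(s+1)/3}\frac{|\hat\phi(\xi)|^2}{\omega'(\xi)}\,d\xi\lesssim\int_{|\xi|>1}\langle\xi\rangle^{2s}|\hat\phi(\xi)|^2\,d\xi,
\]
where the last step uses $\langle\omega(\xi)\rangle^{2(s+1)/3}/\omega'(\xi)\sim\langle\xi\rangle^{2(s+1)}\langle\xi\rangle^{-2}=\langle\xi\rangle^{2s}$. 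For the low-frequency piece I would note that every $t$-derivative of $u_{lo}(x,t)$ is bounded by $C\,\|\phi\|_{L^2(|\xi|\le1)}\lesssim\|\phi\|_{H^s}$ uniformly in $(x,t)$, because $|\omega(\xi)|\le2$ there; multiplying by the compactly supported smooth cutoff $\theta$ then gives $\|\theta(t)u_{lo}(x,\cdot)\|_{H^{\frac{s+1}{3}}_t}\lesssim_\theta\|\phi\|_{H^s}$. Estimate (c) is identical after inserting the factor $i\xi$ from $\partial_x$: the high-frequency integrand becomes $\langle\omega(\xi)\rangle^{2s/3}\,\xi^2\,|\hat\phi(\xi)|^2/\omega'(\xi)\sim\langle\xi\rangle^{2s}|\hat\phi(\xi)|^2$, which matches exactly the lowered time exponent $s/3$.

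For the Bourgain-space bound (d) the cutoff is what renders the norm finite. The space-time Fourier transform of $\theta(t)e^{-t(\partial_x+\partial_x^3)}\phi$ factorizes as $\hat\phi(\xi)\,\hat\theta(\tau-\omega(\xi))$, so substituting $\sigma=\tau-\omega(\xi)$ decouples the $\xi$ and $\sigma$ integrations and, with the modulation measured against the symbol $\omega$,
\[
\|\theta\,e^{-t(\partial_x+\partial_x^3)}\phi\|_{X_{s,b}}^2=\Big(\int\langle\xi\rangle^{2s}|\hat\phi(\xi)|^2\,d\xi\Big)\Big(\int\langle\sigma\rangle^{2b}|\hat\theta(\sigma)|^2\,d\sigma\Big)=\|\phi\|_{H^s}^2\,\|\theta\|_{H^b}^2.
\]
For the $D_\alpha$ factor, on $|\xi|\le1$ one has $\langle\tau\rangle=\langle\sigma+\omega(\xi)\rangle\lesssim\langle\sigma\rangle$ since $|\omega(\xi)|\le2$, so the same substitution yields $\|\theta\,e^{-t(\partial_x+\partial_x^3)}\phi\|_{D_\alpha}^2\lesssim\|\phi\|_{H^s}^2\,\|\theta\|_{H^\alpha}^2$. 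Finally $\|\theta\|_{H^b},\|\theta\|_{H^\alpha}\le\|\theta\|_{H^1}$ because $b,\alpha<1$, which produces the claimed constant $c\,\|\theta\|_{H^1}\|\phi\|_{H^s}$.

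The only genuine difficulty is the pair (b)--(c): everything else is a one-line Plancherel or multiplier computation, whereas the time traces require the global invertibility of $\omega$, the Jacobian bookkeeping $\langle\omega(\xi)\rangle^{2(s+1)/3}/\omega'(\xi)\sim\langle\xi\rangle^{2s}$, and---crucially---the low/high frequency split, because the degeneracy of $\tau=\omega(\xi)$ occurs only at low frequencies, where the temporal cutoff $\theta$ replaces the smoothing gained from the dispersion. Uniformity in $x$ is free throughout because $|e^{ix\xi}|=1$, and the continuity statements $C(\mathbb{R}_t;\cdot)$ and $C(\mathbb{R}_x;\cdot)$ follow from dominated convergence once the bounds are in hand. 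I would follow \cite[Lemma 5.5]{Holmer} for the remaining routine details.
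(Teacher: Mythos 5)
The paper contains no proof of this lemma: it is quoted (with the drift term $\partial_x$ inserted into the group) directly from Holmer \cite[Lemma 5.5]{Holmer}, so the benchmark is Holmer's argument, and your sketch reconstructs it faithfully. Part (a) as a unimodular Fourier multiplier plus dominated convergence is exactly right; for (b)--(c) your substitution $\tau=\omega(\xi)$, the Jacobian bookkeeping $\langle\omega(\xi)\rangle^{2(s+1)/3}/\omega'(\xi)\sim\langle\xi\rangle^{2s}$ on $|\xi|>1$, and the low/high frequency split are the standard Kenig--Ponce--Vega scheme that Holmer invokes. Two small remarks: since the drifted symbol satisfies $\omega'(\xi)=1+3\xi^2\ge 1$, the degeneracy at $\xi=0$ that forces the cutoff in the driftless case is absent here, so your low-frequency treatment is even more robust than needed; and on the high-frequency piece you estimate $u_{hi}(x,\cdot)$ without $\theta$, so you should record that multiplication by the fixed function $\theta\in C_0^\infty$ is bounded on $H^r_t$ for every $r\in\mathbb{R}$ (needed when $\frac{s+1}{3}<0$) before concluding (b). These are routine, and your closing appeal to \cite{Holmer} covers them.

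The one substantive point is in (d), and it is to your credit that your computation makes it visible. You measure the modulation ``against the symbol $\omega$,'' i.e.\ you use the weight $\langle\tau-\xi-\xi^3\rangle^{2b}$, for which the factorization $\widehat{\theta u}(\xi,\tau)=\hat\phi(\xi)\,\hat\theta(\tau-\omega(\xi))$ indeed gives the exact identity $\|\phi\|_{H^s}\|\theta\|_{H^b}$. But the paper's $X_{s,b}$ norm (Definition \ref{def_1}) carries the weight $\langle\tau-\xi^3\rangle^{2b}$, inherited verbatim from Holmer's driftless setting. With that literal weight, substituting $\sigma=\tau-\xi-\xi^3$ gives $\tau-\xi^3=\sigma+\xi$, and since $|\hat\theta|\ge c>0$ on a neighborhood of the origin one has
\begin{equation*}
\int_{\mathbb{R}}\langle\sigma+\xi\rangle^{2b}\,|\hat\theta(\sigma)|^2\,d\sigma \;\sim\; \langle\xi\rangle^{2b}\qquad\text{for }|\xi|\text{ large},
\end{equation*}
so the left-hand side of (d) is comparable to $\|\phi\|_{H^{s+b}}\,$, and the stated bound $c\,\|\theta\|_{H^1}\|\phi\|_{H^s}$ fails for $b>0$. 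In other words, (d) as transplanted into this paper is coherent only if the Bourgain weight is adapted to the full symbol $\xi+\xi^3$ (as in your computation), or if one first removes the drift by $v(x,t)=u(x+t,t)$ as in Holmer's original setting. This is an inconsistency in the paper's statement rather than a gap in your proof, but you should state explicitly that you are proving the version with the weight $\langle\tau-\xi-\xi^3\rangle^{2b}$, since the estimate with the paper's literal norm is quantitatively different.
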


In addition to the previous lemma, and taking into account that the operator $\mathscr{D}$ was defined above in \eqref{inhomo}, satisfying \eqref{inhomoequation}. Consider
$$
\|u\|_{Y_{s, b}}=\left(\iint_{\xi, \tau}\langle\tau\rangle^{2 s / 3}\left\langle\tau-\xi^3\right\rangle^{2 b}|\hat{u}(\xi, \tau)|^2 d \xi d \tau\right)^{1 / 2}.
$$
Thus, the next result is as follows.
\begin{lemma}\cite[Lemma 5.6]{Holmer}\label{estimateinhom}
 Let $s \in \mathbb{R}$. Then:
 \begin{enumerate}
 \item[(a)] (Space traces) If $0 \leq b<\frac{1}{2}$, then
$$
\|\theta(t) \mathscr{D} w(x, t)\|_{C\left(\mathbb{R}_t ; H_x^s\right)} \leq c\|w\|_{X_{s,-b}} ;
$$
\item[(b)] (Time traces) If $0<b<\frac{1}{2}$, then
$$
\|\theta(t) \mathscr{D} w(x, t)\|_{C\left(\mathbb{R}_x ; H_t^{\frac{s+1}{3}}\right)} \leq \begin{cases}c\|w\|_{X_{s,-b}}, & \text { if }-1 \leq s \leq \frac{1}{2}, \\ c\left(\|w\|_{X_{s,-b}}+\|w\|_{Y_{s,-b}}\right), & \text { for any } s .\end{cases}
$$

If $s<\frac{7}{2}$, then $\|\theta(t) \mathscr{D} w(x, t)\|_{C\left(\mathbb{R}_x ; H_0^{\frac{s+1}{3}}\left(\mathbb{R}_t^{+}\right)\right)}$has the same bound.
\item[(c)] (Derivative time traces) If $0<b<\frac{1}{2}$, then
$$
\left\|\theta(t) \partial_x \mathscr{D} w(x, t)\right\|_{C\left(\mathbb{R}_x ; H_t^{\frac{s}{3}}\right)} \leq \begin{cases}c\|w\|_{X_{s,-b}},& \text { if } 0 \leq s \leq \frac{3}{2},\\ c\left(\|w\|_{X_{s,-b}}+\|w\|_{Y_{s,-b}}\right), & \text { for any } s .\end{cases}
$$

If $s<\frac{9}{2}$, then $\left\|\theta(t) \partial_x \mathscr{D} w(x, t)\right\|_{C\left(\mathbb{R}_x ; H_0^{\frac{s}{3}}\left(\mathbb{R}_t^{+}\right)\right)}$has the same bound.
\item[(d)] (Bourgain space estimate) If $0 \leq b<\frac{1}{2}$ and $\alpha \leq 1-b$, then $$\|\theta(t) \mathscr{D} w(x, t)\|_{X_{s, b} \cap D_\alpha} \leq c\|w\|_{X_{s,-b}}.$$
\end{enumerate}
\end{lemma}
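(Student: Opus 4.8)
The plan is to exploit the fact that, on the regularity window $\frac34<s\le\frac32$, the three operators appearing in Definitions \ref{def-a}, \ref{def-b} and \ref{def-c} furnish \emph{simultaneously valid} solution formulas for one and the same linear problem \eqref{eqnew21}. Indeed, the forcing representation \eqref{eqnew18} solves \eqref{eqnew21} for $-\frac34<s\le\frac32$ (Holmer \cite{Holmer}, Kenig \textit{et al.} \cite{CK}), the boundary representation of Definition \ref{def-b} for $\frac34<s\le 3$ (Bona \textit{et al.} \cite{Bonahalf1}), and the UTM representation \eqref{eqnew20} for $-\frac34<s$ with $\frac{s+1}{3}\notin\mathbb{N}+\frac12$ (Fokas \textit{et al.} \cite{fokas2}); their common range of validity is exactly $\frac34<s\le\frac32$ (see Figure \ref{figurateorema}). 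Since well-posedness (Theorem \ref{localwellposedness}) guarantees uniqueness of the solution of \eqref{eqnew21} in $C([0,T];H^s(\R^+_x))$, the three formulas all represent the identical function $u(x,t)$, and in particular agree at $t=T$.

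First I would record the value of this common solution at $t=T$. By hypothesis $f$ achieves operational exact controllability, so $u(x,T)=\phi_T(x)$; reading $u(\cdot,T)$ off each representation, via Definitions \ref{def-a}, \ref{def-b} and \ref{def-c} (and the equivalent form \eqref{eqnew19} for the forcing operator), produces the three scalar identities
\begin{equation}\label{plan-F}
\frac{3}{\Gamma(\lambda)} \int_{x}^{\infty}\int_0^T (y-x)^{\lambda-1}  A\left(\frac{y}{\left(T-t^{\prime}\right)^{1 / 3}}\right) \frac{\mathcal{I}_{-\frac23 - \frac{\lambda}{3}} f\left(t^{\prime}\right)}{\left(T-t^{\prime}\right)^{1 / 3}} d t^{\prime} d y=\phi_T(x),
\end{equation}
\begin{equation}\label{plan-B}
2\operatorname{Re} \left(\int_1^{\infty}\int_0^{\infty} e^{i \mu^3 T-i \mu T} e^{-\left(\frac{\sqrt{3 \mu^2-4}+i \mu}{2}\right) x}\left(3 \mu^2-1\right)  e^{-\left(\mu^3 i-i \mu\right) \xi} f(\xi) d \xi d \mu\right)=2\pi\phi_T(x),
\end{equation}
\begin{equation}\label{plan-U}
\int_{\partial D^{+}_R}\int_0^T \mathrm{e}^{\mathrm{i} k x-\mathrm{i} (k-k^3) (T-t^{\prime})} (3 k^2-1)  f\left(t^{\prime}\right)d t^{\prime} d k=-2\pi\phi_T(x),
\end{equation}
coming from the forcing, boundary and UTM operators respectively.

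Next I would eliminate the unknown target $\phi_T$ by pairwise combination. Multiplying \eqref{plan-F} by $2\pi$ turns its left-hand side into the coefficient $6\pi/\Gamma(\lambda)$ appearing in the statement and equates it to $2\pi\phi_T$; adding this to \eqref{plan-U} cancels $\phi_T$ and gives the first asserted relation, subtracting \eqref{plan-B} from the rescaled \eqref{plan-F} gives the second, and adding \eqref{plan-B} to \eqref{plan-U} gives the third. In fact all three relations hold at once, so the disjunction in the statement is certainly satisfied. This last step is elementary linear algebra once \eqref{plan-F}--\eqref{plan-U} are in hand.

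The hard part is not this final cancellation but \emph{verifying that the three formulas yield the identical solution}. This demands (i) checking that each representation is a genuine solution of \eqref{eqnew21} in $C([0,T];H^s(\R^+_x))$ throughout the overlap --- quoted from \cite{Holmer,Bonahalf1,fokas2}, but with attention to the parameter $\lambda$, which must be fixed compatibly with the well-posedness constraint tying $\lambda$ to $s$ noted after \eqref{eqnew19}; and (ii) a uniqueness statement applicable to all three formulas simultaneously, which I would draw from Theorem \ref{localwellposedness}. A secondary bookkeeping point is the careful tracking of the normalizing constants ($2\pi$, $-2\pi$, $\Gamma(\lambda)$), so that the factor $6\pi/\Gamma(\lambda)$ in the statement emerges correctly from the factor $3/\Gamma(\lambda)$ in \eqref{eqnew19}.
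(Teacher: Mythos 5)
Your proposal does not address the statement it is supposed to prove. The statement is Lemma \ref{estimateinhom}, the collection of linear estimates for the Duhamel inhomogeneous solution operator $\mathscr{D}w(x,t)=\int_0^t e^{-(t-t')(\partial_x+\partial_x^3)}w(x,t')\,dt'$: space traces, time traces, derivative time traces, and the Bourgain-space bound, all controlling $\theta(t)\mathscr{D}w$ by $\|w\|_{X_{s,-b}}$ (plus $\|w\|_{Y_{s,-b}}$ outside the windows $-1\le s\le\frac12$, resp.\ $0\le s\le\frac32$). What you sketched instead is an argument for Theorem \ref{main-c}: you derive the three identities coming from Definitions \ref{def-a}, \ref{def-b} and \ref{def-c} on the overlap $\frac34<s\le\frac32$, invoke uniqueness from Theorem \ref{localwellposedness}, and cancel $\phi_T$ pairwise. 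Nowhere do you bound any norm of $\theta(t)\mathscr{D}w$, mention the spaces $X_{s,b}$, $Y_{s,b}$, $D_\alpha$ in an estimate, or explain the exceptional thresholds $s<\frac72$ and $s<\frac92$ for the $H_0^{(s+1)/3}(\R^+_t)$ and $H_0^{s/3}(\R^+_t)$ versions --- which is the entire content of the lemma. The two results are logically unrelated: the lemma is an ingredient used later (in the fixed-point arguments for Theorems \ref{main-a} and \ref{main-b}), not a consequence of the operational-controllability identities.

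For the record, the paper itself does not reprove this lemma; it is quoted verbatim from Holmer \cite[Lemma 5.6]{Holmer}. An actual proof runs through Fourier analysis on the Duhamel integral: one writes $\theta(t)\mathscr{D}w$ in terms of $\hat w(\xi,\tau)$, splits the multiplier $\bigl(e^{it\tau}-e^{it(\xi+\xi^3)}\bigr)/\bigl(i(\tau-\xi-\xi^3)\bigr)$ into near-resonant ($\langle\tau-\xi^3\rangle\lesssim 1$, handled by Taylor expansion against the cutoff $\theta$) and non-resonant pieces, and estimates each contribution in $C(\mathbb{R}_t;H^s_x)$, $C(\mathbb{R}_x;H_t^{(s+1)/3})$, etc.; the auxiliary norm $Y_{s,-b}$ enters precisely where the low-frequency time-trace contributions fail to be controlled by $X_{s,-b}$ alone, and the restrictions $s<\frac72$, $s<\frac92$ reflect when the trace lies in the flat space $H_0^{(s+1)/3}(\R_t^+)$, resp.\ $H_0^{s/3}(\R_t^+)$. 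If you intend to justify the lemma within this paper, the honest options are either to reproduce that harmonic-analysis argument or simply to cite Holmer, as the authors do; your pairwise-cancellation scheme cannot be repaired into a proof of these estimates.
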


Finally, the operators $\mathscr{L}_{ \pm}^\lambda$  defined in \eqref{forcingmenos} and \eqref{forcingmas} have the following properties.
\begin{lemma}\cite[Lemma 5.8]{Holmer}\label{estimateforcing}
Let $s \in \mathbb{R}$. Then:
\begin{enumerate}
\item[(a)] (Space traces) If $s-\frac{5}{2}<\lambda<s+\frac{1}{2}, \quad \lambda<\frac{1}{2}$, and supp $f \subset[0,1]$, then $$\left\|\mathscr{L}_{ \pm}^\lambda f(x, t)\right\|_{C\left(\mathbb{R}_t ; H_x^s\right)} \leq c\|f\|_{ H_0^{\frac{s+1}{3}\left(\mathbb{R}^{+}\right)}}.$$
\item[(b)] (Time traces) If $-2<\lambda<1$, then
$$
\left\|\theta(t) \mathscr{L}_{ \pm}^\lambda f(x, t)\right\|_{C\left(\mathbb{R}_x ; H_0^{\frac{s+1}{3}}\left(\mathbb{R}_t^{+}\right)\right)} \leq c\|f\|_{H_0^{\frac{s+1}{3}}\left(\mathbb{R}^{+}\right)}
$$
\item[(c)] (Derivative time traces) If $-1<\lambda<2$, then
$$
\left\|\theta(t) \partial_x \mathscr{L}_{ \pm}^\lambda f(x, t)\right\|_{C\left(\mathbb{R}_x ; H_0^{\frac{s}{3}}\left(\mathbb{R}_t^{+}\right)\right)} \leq c\|f\|_{H_0^{\frac{s+1}{3}}\left(\mathbb{R}_t^{+}\right)}
$$
\item[(d)] (Bourgain space estimate) If $s-1 \leq \lambda<s+\frac{1}{2}, \lambda<\frac{1}{2}, \alpha \leq \frac{s-\lambda+2}{3}$, and $0 \leq b<\frac{1}{2}$, then $$\left\|\theta(t) \mathscr{L}_{ \pm}^\lambda f(x, t)\right\|_{X_{s, b} \cap D_\alpha} \leq c\|f\|_{H_0^{\frac{s+1}{3}\left(\mathbb{R}_t^{+}\right)}}.$$
\end{enumerate}
\end{lemma}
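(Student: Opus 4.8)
The plan is to reduce all four estimates to Fourier-analytic bounds on an explicit representation of $\mathscr{L}_\pm^\lambda f$, following the scheme by which the core operator $\mathscr{L}^0$ in \eqref{eqnew17} is analyzed. The starting point is the decomposition already recorded in \eqref{forcingmenos} and \eqref{forcingmas}: each $\mathscr{L}_\pm^\lambda f$ splits into a \emph{principal term} $3\,\frac{x_\mp^{(\lambda+3)-1}}{\Gamma(\lambda+3)}\,\mathcal{I}_{-\frac23-\frac\lambda3}f(t)$, which is a purely homogeneous function of $x$ times a time fractional integral of $f$, plus a \emph{dispersive remainder} given by a convolution of the now continuous kernel $\frac{x_\pm^{\lambda+2}}{\Gamma(\lambda+3)}$ against $\mathscr{L}^0\!\left(\partial_t\mathcal{I}_{-\lambda/3}f\right)$ (the extra $\partial_t$ trades, through $(\partial_t+\partial_x^3)\mathscr{L}^0=3\delta_0\mathcal{I}_{-2/3}$, for spatial smoothing). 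The principal term is estimated directly using the $H_0^s(\mathbb{R}^+)$ mapping properties of the fractional integrals $\mathcal{I}_{\pm\alpha}$ quoted in the earlier lemmas, while for the remainder I would pass to the space-time Fourier transform of $\mathscr{L}^0$: for each fixed temporal frequency $\tau$ its spatial profile is a superposition of the modes $e^{i\xi_j(\tau)x}$, where $\xi_j(\tau)$ runs over the roots of $\tau=\xi^3$, with the decaying root selected on each half-line. Via \eqref{transformada} the operators $\mathcal{I}_{-2/3}$, $\mathcal{I}_{-\lambda/3}$ and the spatial kernels $x_\pm^{\lambda-1}/\Gamma(\lambda)$ then become explicit multiplier weights in $\tau$ and $\xi$; the drift $\partial_x$ present in these formulas only shifts the phase $\xi^3$ to $\xi+\xi^3$ and, being lower order, does not affect the high-frequency scaling.

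With this representation I would prove (a)--(d) by splitting the temporal frequency into a high regime $|\tau|\gtrsim 1$ and a low regime $|\tau|\lesssim 1$. In the high regime the roots satisfy $|\xi_j(\tau)|\sim|\tau|^{1/3}$ and the selected mode decays exponentially in $x$, so the profile is Schwartz away from $x=0$ and the only delicate point is the boundary layer near $x=0$, controlled by the Airy asymptotics recorded after \eqref{airyfunction} and by the homogeneity of $x_\pm^{\lambda-1}$. Matching the target weight against the symbol produced by $\mathcal{I}_{-\frac23-\frac\lambda3}$ is exactly where the admissible ranges enter: for the space traces one balances $\langle\xi\rangle^{s}$ against the $\lambda$-homogeneity, forcing $s-\tfrac52<\lambda<s+\tfrac12$ and $\lambda<\tfrac12$; for the time traces one balances $\langle\tau\rangle^{(s+1)/3}$, giving $-2<\lambda<1$; and for the derivative time traces the extra factor $\xi$ from $\partial_x$ shifts the weight to $\langle\tau\rangle^{s/3}$, giving $-1<\lambda<2$. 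In each case the lower bound on $\lambda$ secures local integrability of the kernel near $x=0$ and the upper bound secures convergence of the $\tau$-integral.

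The low regime $|\tau|\lesssim 1$ is where the auxiliary space $D_\alpha$ is indispensable and where the argument departs from a pure $X_{s,b}$ estimate: there the roots $\xi_j(\tau)$ cluster near the origin, the dispersive weight $\langle\tau-\xi^3\rangle$ provides no gain, and the modes do not decay, so the contribution must be absorbed by the $\langle\tau\rangle^{2\alpha}$ weight of $D_\alpha$ on $\{|\xi|\le1\}$. For the Bourgain estimate (d) I would combine both regimes, testing $\mathscr{L}_\pm^\lambda f$ against $\langle\xi\rangle^{2s}\langle\tau-\xi^3\rangle^{2b}$: with $0\le b<\tfrac12$ the dispersive denominator is integrable against the symbol in the high regime, while the constraints $s-1\le\lambda<s+\tfrac12$, $\lambda<\tfrac12$ and $\alpha\le\tfrac{s-\lambda+2}{3}$ are precisely those under which the high-frequency $\langle\xi\rangle^{2s}$ weight and the low-frequency $\langle\tau\rangle^{2\alpha}$ weight are simultaneously controlled by $\|f\|_{H_0^{(s+1)/3}}$.

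The hardest part will be making the multiplier computation rigorous across the whole range of $\lambda$. The kernels $x_\pm^{\lambda-1}/\Gamma(\lambda)$ and the operators $\mathcal{I}_{-\lambda/3}$ are genuine distributions, defined by the analytic continuation \eqref{gamma}, so the formal identities must first be established for $\operatorname{Re}\lambda$ large, where each convolution is an honest locally integrable integral, and then continued in $\lambda$, verifying that the bounds remain uniform and that no pole of $\Gamma$ intrudes in the admissible windows. Tied to this is the uniform control of the boundary layer at $x=0$, where the oscillation of the Airy function and the homogeneous singularity of $x_\pm^{\lambda-1}$ interact; pinning down the endpoint behaviour at the extremes of each $\lambda$-interval, and thereby confirming the strict versus non-strict inequalities stated in (a)--(d), is the delicate bookkeeping on which the whole lemma rests.
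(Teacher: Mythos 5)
This lemma is not proved in the paper at all: it is imported verbatim from \cite{Holmer} (Lemma 5.8 there), and the paper explicitly defers to that source, so there is no in-paper argument to compare yours against line by line. Measured against the cited proof, your outline is essentially the right program and follows the same route Holmer actually takes: the splitting of $\mathscr{L}_{\pm}^{\lambda}f$ recorded in \eqref{forcingmenos}--\eqref{forcingmas} into a homogeneous-in-$x$ principal term times $\mathcal{I}_{-\frac{2}{3}-\frac{\lambda}{3}}f$ plus a smoother convolution term, the passage to explicit Fourier representations using \eqref{transformada} and the Airy kernel in \eqref{eqnew17}, a high/low temporal frequency decomposition with the decaying root $e^{-x\beta(\tau)}$ controlling the half-line profile, the role of $D_\alpha$ at low frequency, and analytic continuation in $\lambda$ from a region where the convolutions are honest integrals.

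Two caveats are worth flagging. First, what you have written is a plan rather than a proof: every window in $\lambda$ in parts (a)--(d) is \emph{asserted} to come from a ``balancing'' of weights, but deriving those windows (and in particular the non-strict endpoint $s-1\le\lambda$ in (d), which your heuristic cannot distinguish from a strict inequality) is precisely the technical bulk of Holmer's Section 5, where a separate lemma first establishes the exact oscillatory-integral formula for $\mathscr{L}^{\lambda}f$ before any estimate is attempted; the analytic-continuation step you correctly identify as delicate is likewise carried out there with explicit pole bookkeeping for $1/\Gamma(\lambda)$. Second, your remark that the drift $\partial_x$ ``only shifts the phase $\xi^3$ to $\xi+\xi^3$'' misreads the setup of this paper: $\mathscr{L}^0$ is defined in \eqref{eqnew17} with the drift-free group $e^{-(t-t')\partial_x^3}$, i.e.\ with the pure Airy kernel, and the drift is instead accounted for by the extra lower-order forcing terms $3(\lambda+2)\frac{x_{\mp}^{\lambda+1}}{\Gamma(\lambda+3)}\mathcal{I}_{-\frac{2}{3}-\frac{\lambda}{3}}f$ appearing in the distributional identities after \eqref{forcingmas}. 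So no phase shift occurs inside the forcing operator, and the estimates of \cite{Holmer} apply as stated; a phase-shifted kernel would in fact require redoing the Airy asymptotics. Neither caveat breaks your approach, but both mark the places where the sketch stops short of the cited proof.
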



\begin{thebibliography}{100}

\bibitem{Bonahalf1}
J. L. Bona, S.-M. Sun, and B.-Y. Zhang,  \emph{A non-homogeneous boundary-value problem for the Korteweg-de Vries equation in a quarter plane.} Transactions of the American Mathematical Society, 354(2), (2002), 427--490. 

\bibitem{Bonahalf2}
J. L. Bona, S.-M. Sun, and B.-Y. Zhang, \emph{Non-homogeneous boundary value problems for the Korteweg–de Vries and the Korteweg–de Vries–Burgers equations in a quarter plane.} Annales de l'Institut Henri Poincaré C, Analyse non linéaire, 25(6), (2008), 1145--1185.

\bibitem{bonahalf3}
J. L Bona, S.-M. Sun, and B.-Y. Zhang, \emph{Boundary smoothing properties of the {K}orteweg-de {V}ries equation in a quarter plane and applications.} Dyn. Partial Differ. Equ., (3), (2006), 1--69.

\bibitem{BoWi} J. L. Bona R. Winther,  \emph{The Korteweg-de Vries equation, posed in a quarter-plane.} SIAM J. Math.
Anal. 14(6), (1983), 1056--1106.

\bibitem{BoWi1}J. L. Bona and R. Winther,  \emph{The Korteweg-de Vries equation in a quarter plane, continuous dependence
results.} Differ. Integral Equ. 2(2), (1989), 228--250.

\bibitem{Bourgain} J. Bourgain,  \emph{Fourier transform restriction phenomena for certain lattice subsets and applications to nonlinear evolution equations, II: The KDV-equation.} Geom. Funct. Anal., 3(3), (1993), 209--262.


\bibitem{CaCaZh}M. A. Caicedo, R. A. Capistrano--Filho, and B.-Y Zhang, \textit{Neumann Boundary Controllability of the Korteweg--de Vries Equation on a Bounded Domain.} SIAM Journal on Control and Optimization. 55, (2017), 3503--3532.

\bibitem{Capistrano}R. de A. Capistrano-Filho, \textit{A note on the critical length phenomenon for the Korteweg-de Vries equation}, Analysis and PDE in Latin America, Springer for the Series: Ghent Analysis and PDE Center, Research Perspectives.

\bibitem{CaSi}R. A. Capistrano-Filho and J. S. da Silva,  \emph{Boundary controllability of the Korteweg-de Vries equation: The Neumann case}, Communications in Mathematical Sciences, 24:3 (2026).

\bibitem {cav} M. Cavalcante,  \emph{The Korteweg-de Vries on a metric star graph.} Z. Angew. Math. Phys. 69:(124), (2018).

\bibitem {cerpa1}E. Cerpa, \emph{Exact controllability of a nonlinear Korteweg-de Vries equation on a critical spatial domain.} SIAM J. Control Optim., 46 (2007), 877--899.

\bibitem {cerpatut}E. Cerpa, \emph{Control of a Korteweg-de Vries equation: a tutorial.}  Math. Control Relat. Fields, 4 (2014), 45--99.

\bibitem {cerpa2}E. Cerpa and E. Cr\'{e}peau, \emph{Boundary controllability for the nonlinear Korteweg-de Vries equation on any critical domain.} Ann. I.H. Poincar\'{e}, 26 (2009), 457--475.

\bibitem{CK} J. E. Colliander and C. E. Kenig, {\it The generalized Korteweg-de Vries equation on the half line.}Comm. Partial Differential Equations, 27 (2002), 2187--2266.

\bibitem{CeRiZh} E. Cerpa, I. Rivas, and B.-Y. Zhang,  \emph{Boundary controllability of the Korteweg-de Vries equation on a bounded domain.} SIAM J. Control Optim., 51:4 (2013), 2976--3010.
	
\bibitem {coron} J.-M. Coron and E. Cr\'{e}peau,  \emph{Exact boundary controllability of a nonlinear KdV equation with a critical length.} J. Eur. Math. Soc., 6 (2004), 367--398.
	
\bibitem{CoCre}J.-M. Coron and E. Cr\'epeau, \textit{ Exact boundary controllability of a nonlinear KdV equation with critical lengths.} Journal of the European Mathematical Society, no. 6 (2004), 367-398.

\bibitem {crepeau}E. Cr\'epeau, \emph{Exact boundary controllability of the Korteweg--de Vries equation around a non-trivial stationary solution.} International Journal of Control, 74:11 (2001), 1096--1106.

\bibitem {DoRu} S. Dolecki, D.L. Russell,  \emph{A general theory of observation and control.} SIAM J. Control Opt. 15
(1977), 185--220.

\bibitem{Faminskii1a}  A. V. Faminskii,  \emph{A mixed problem in a semistrip for the Korteweg-de Vries equation and its generalisations.} Din. Sploshn. Sredy, 258, (1988), 54--94.

\bibitem{Faminskii} A. V. Faminskii, {\it An initial boundary-value problem in a half-strip for the Korteweg-de Vries
equation in fractional-order Sobolev spaces.} Comm. Partial Differential Eq., 29 (2004), 1653--1695.

\bibitem{Faminskii2}A. V. Faminskii,  \emph{Global well-posedness of two initial-boundary-value problems for the Korteweg-de
Vries equation.} Differ. Integral Equ. 20(6), (2007), 601--642.

\bibitem{fokas} A. S. Fokas,   \emph{A unified transform method for solving linear and certain nonlinear PDEs.} Proceedings of the Royal Society of London. Series A: Mathematical, Physical and Engineering Sciences, 453 (1997), 1411--1443.

\bibitem{fokas0} A.S. Fokas,  \emph{A unified approach to boundary value problems.} Society for Industrial and Applied Mathematics, (2008).

\bibitem{fokas2} A. S. Fokas, A. A. Himonas, and D. Mantzavinos, \emph{ The Korteweg–de Vries equation on the half-line.} Nonlinearity, 29(2), (2016).

\bibitem {GG}O. Glass and S. Guerrero, \emph{Some exact controllability results for the linear KdV equation and uniform controllability in the zero-dispersion limit.} Asymptot. Anal., 60 (2008), 61--100.

\bibitem {GG1}O. Glass and S. Guerrero, \emph{Controllability of the Korteweg-de Vries equation from the right Dirichlet boundary condition.} Systems Control Lett., 59 (2010), 390--395.

\bibitem {Gui} J.-P. Guilleron, \emph{Null controllability of a linear KdV equation on an interval with special boundary conditions.} Math. Control Signals Syst., 26 (2014), 375--401.

\bibitem{Holmer}J. Holmer, {\it The initial-boundary value problem for the Korteweg-de Vries equation.} Comm. Partial Differential Equations, 31, (2006), 1151--1190.	

\bibitem{himonas2022}
A.A. Himonas and F. Yan, {\it A higher dispersion KdV equation on the half-line}. J. Differ. Equ. 333, (2022), 55-102.

\bibitem{Ozsari2020} K. Kalimeris and T. \"Ozsari, {\it An elementary proof of the lack of null controllability for the heat equation on the half line.} Applied Mathematics Letters, 104 (2020), 106241.

Univ. Math. J. 40, (1991), 33--69.

\bibitem{Korteweg} D. J. Korteweg and G. de Vries, \emph{On the change of form of long waves advancing in a rectangular canal, and on a new type of long stationary waves}, Phil. Mag. 39 (1895), 422--443. 

\bibitem{Faminskii1984} S. N. Kruzhkov and A.V. Faminskii,  \emph{Generalized solutions of the Cauchy problem for the Korteweg–de Vries equation.} Mat. Sb., 120, (1984), 396--425; English transl. in: Sb. Math., 48 (2), (1984), 391--421.

\bibitem{Lions} J.-L. Lions,  \emph{Contr\^olabilit\'e Exacte, Stabilisation et Perturbations de Syst\'emes Distribu\'es, Tome 1: Contr\^olabilit\'e Exacte.} RMA 8, Masson, Paris (1988).

\bibitem{linares2009} F. Linares and A. F. Pazoto.  \emph{Asymptotic behavior of the Korteweg–de Vries equation posed in a quarter plane.} Journal of Differential Equations, 246 (4), (2009), 1342--1353.

\bibitem{munoz} C. Mu\~{n}oz, \emph{On approximate controllability and stabilization of generalized KdV solitons.} SIAM J. Control and Opt. 52 (1) (2014), 52--81.

\bibitem{pazy} A. Pazy,  \emph{Semigroups of linear operators and applications to partial differential equations.} Springer-Verlag, New York, 1983	

\bibitem {Rosier}
L. Rosier, {\em Exact boundary controllability for the Korteweg-de Vries equation on a bounded domain.} ESAIM Control Optim. Cal. Var. 2 (1997), 33--55.


\bibitem{RosierSICON2000}
L. Rosier, {\em Exact boundary controllability for the linear {K}orteweg-de {V}ries equation on the half-line.} SIAM J. Control Optim. 39 (2000), 331--351.

\bibitem{Rosier2002}
L. Rosier, {\em A fundamental solution supported in a strip for a dispersive equation.} Computational and Applied Mathematics, 21 (2002), 355--367.

\bibitem{RZsurvey} L. Rosier and B.-Y. Zhang, \emph{Control and stabilization of the Korteweg-de Vries equation: Recent progress.} J. Syst Sci $\&$ Complexity, 22 (2009), 647--682.


\bibitem{russell3}  D. L. Russell, \emph{Computational study of the Korteweg-de Vries equation with localized control action,}
Distributed Parameter Control Systems: New Trends and Applications, G. Chen, E. B. Lee, W. Littman, and L. Markus, eds., Marcel Dekker, New York, 1991.

\bibitem{russell2}  D. L. Russell and B.-Y. Zhang,  \emph{Controllability and stabilizability of the third-order linear dispersion equation on a periodic domain.}
SIAM J. Control and Optimization, 31 (1993), 659--676.

\bibitem {Russell1} D. L. Russell and B.-Y. Zhang,  \emph{Exact controllability and stabilizability of the Korteweg-de Vries equation.}
Trans. Amer. Math. Soc., 348 (1996), 3643--3672.

\bibitem{Ton} B. A. Ton,  \emph{Initial boundary-value problems for the Korteweg-de Vries equation.} J. Differ. Equ., 25, (1977) 288--309.

\bibitem {zhang2} B.-Y. Zhang,  \emph{Exact boundary controllability of the Korteweg-de Vries equation.} SIAM J. Cont. Optim., 37 (1999), 543--565.

\bibitem{zhang4} B.-Y. Zhang, {\emph Some results for nonlinear dispersive wave equations with applications to control}, Ph D thesis, University of Wisconsin-Madison, 1990.

\bibitem{yosida} K. Yosida,  \emph{Functional Analysis.} Springer, Berlin (1978).
\end{thebibliography}
\end{document}